\titleformat{\subsection}[runin]{\bfseries}{\thesubsection}{1em}{}[.]
\titleformat{\subsubsection}[runin]{\itshape}{\thesubsubsection}{1em}{}[.]
\newcommand{\RR}{\mathbb{R}}
\newcommand{\LL}{\mathcal{L}}
\renewcommand{\AA}{\mathcal{A}}
\newcommand{\BB}{\mathcal{B}}
\newcommand{\VV}{\mathcal{V}}
\newcommand{\GG}{\mathcal{G}}
\newcommand{\toto}{\rightrightarrows}
\newcommand{\rar}[1]{\overrightarrow{#1}}
\newcommand{\lar}[1]{\overleftarrow{#1}}
\newcommand{\del}{\partial}
\newcommand{\deldel}[1]{\frac{\del}{\del {#1}}}
\newcommand{\can}{\mathrm{can}}
\newcommand{\FF}{\mathcal{F}}
\newcommand{\EE}{\mathcal{E}}
\newcommand{\mc}[1]{\mathcal{#1}}
\newcommand{\mf}[1]{\mathfrak{#1}}
\let\Im\relax
\DeclareMathOperator{\Im}{Im}
\DeclareMathOperator{\id}{Id}
\DeclareMathOperator{\Diff}{Diff}
\DeclareMathOperator{\Bis}{Bis}
\DeclareMathOperator{\Pair}{Pair}
\DeclareMathOperator{\Fol}{Fol}
\newcommand{\fol}{\mf{fol}}
\DeclareMathOperator{\Ham}{Ham}
\newcommand{\ham}{\mf{ham}}
\newcommand{\Hamloc}{\operatorname{Ham}_{\mathrm{loc}}}
\newcommand{\hamloc}{\mf{ham}_{\mathrm{loc}}}
\newcommand{\Hamlocc}{\operatorname{Ham}_{\mathrm{loc,c}}}
\newcommand{\hamlocc}{\mf{ham}_{\mathrm{loc},c}}
\DeclareMathOperator{\InnAut}{InnAut}
\DeclareMathOperator{\Hol}{Hol}
\newsavebox{\@brx}
\newcommand{\llangle}[1][]{\savebox{\@brx}{\(\m@th{#1\langle}\)}%
	\mathopen{\copy\@brx\kern-0.5\wd\@brx\usebox{\@brx}}}
\newcommand{\rrangle}[1][]{\savebox{\@brx}{\(\m@th{#1\rangle}\)}%
	\mathclose{\copy\@brx\kern-0.5\wd\@brx\usebox{\@brx}}}
\theoremstyle{plain}
\newtheorem{theorem}{Theorem}[section]
\newtheorem{proposition}[theorem]{Proposition}
\newtheorem{corollary}[theorem]{Corollary}
\newtheorem{lemma}[theorem]{Lemma}
\newtheorem{question}[theorem]{Question}
\theoremstyle{definition}
\newtheorem{remark}[theorem]{Remark}
\newenvironment{definition}
  {\pushQED{\qed}\df}
  {\popQED\enddf}
\newenvironment{example}
  {\pushQED{\qed}\ex}
  {\popQED\endex}
\newenvironment{claimproof}[1]{\noindent\textit{Proof of claim.}\space#1}{\par\addvspace{8pt}}
\newtheorem*{claim*}{Claim}
\newtheorem*{notation*}{Notation}
\title{Lie groups of Poisson diffeomorphisms}
\author{Wilmer Smilde\footnote{wsmilde2@illinois.edu}}
\date{}
\begin{document}
\maketitle
\begin{abstract} By considering suitable Poisson groupoids, we develop an approach to obtain Lie group structures on (subgroups of) the Poisson diffeomorphism groups of various classes of Poisson manifolds. As applications, we show that the Poisson diffeomorphism groups of (normal-crossing) log-symplectic, elliptic symplectic, scattering-symplectic and cosymplectic manifolds are regular infinite-dimensional Lie groups.
\end{abstract}
\tableofcontents

\section*{Introduction}\addcontentsline{toc}{section}{Introduction}
\newcommand{\Marcut}{M\u{a}rcu\textcommabelow{t}}

A Poisson structure on a manifold $M$ is a Lie bracket $\{\cdot, \cdot\}$ on the space of smooth functions $C^\infty(M)$ subject to the Leibniz rule $\{fg, h\}=f\{g, h\}+g\{f, h\}$ for $f,g,h \in C^\infty(M)$. Alternatively, the Poisson structure can be described by a bivector field $\pi\in \Gamma(\wedge^2TM)$ for which the Schouten-Nijenhuis bracket $[\pi, \pi]$ vanishes. It induces the Lie bracket on $C^\infty(M)$ via $\{f, g\}=\pi(df, dg)$. 

A \textit{Poisson map} $f:(M, \pi_M)\to (N, \pi_N)$ between Poisson manifolds $(M, \pi_M)$ and $(N, \pi_N)$ is a smooth map $f$ for which $\pi_M$ is $f$-related to $\pi_N$, meaning that $\wedge^2 T_x f (\pi_{M, x})=\pi_{N, f(x)}$ for all $x\in M$. 

A Poisson manifold $(M, \pi)$ comes with a symplectic foliation, spanned by the \textit{Hamiltonian vector fields}
\[
X_f=\{f, \cdot\}, \quad \mbox{for $f\in C^\infty(M)$}.
\]
A symplectic leaf $S$ comes with a symplectic stucture determined by $\omega_S(X_f, X_g)=-\{f, g\}\vert_S$ for $f, g\in C^\infty(M)$.

A \textit{Poisson diffeomorphism} of $(M, \pi)$ is a diffeomorphism of $M$ that is also a Poisson map. This defines a subgroup $\Diff(M,\pi)$ of $\Diff(M)$, called the Poisson diffeomorphism group. Note that a diffeomorphism is Poisson if and only if it sends each symplectic leaf symplectomorphically onto a (possibly different) symplectic leaf.
\subsection*{The Poisson diffeomorphism group}

The group of symplectomorphisms $\Diff(M, \omega)$ of a symplectic manifold $(M, \omega)$ is well-understood  as an infinite-dimensional Lie group \cite{ebinmarsden1970, krieglmichor1997}. Much less is known about $\Diff(M, \pi)$ for general Poisson manifolds.

For an arbitrary Poisson manifold $(M, \pi)$, the group of Poisson diffeomorphisms $\Diff(M, \pi)$ can have many interesting subgroups:
\begin{itemize}[noitemsep, topsep=0em]
	\item \textit{Foliated Poisson diffeomorphisms} $\Fol(M, \pi)$ are those that send each leaf to itself. Correspondingly, one has the \textit{foliated Poisson vector fields} $\fol(M, \pi)$, which are the Poisson vector fields that are tangent to the symplectic foliation.  
	\item The group of \textit{Hamiltonian diffeomorphisms} $\Ham(M, \pi)$ consists of time-1 flows of Hamiltonian vector fields generated by a time-dependent Hamiltonian function.
	\item The \textit{locally Hamiltonian diffeomorphisms} $\Hamloc(M, \pi)$ are the of time-1 flows of locally Hamiltonian vector fields generated by a time-dependent closed one-form.
\end{itemize}
\begin{example}[Symplectic manifolds]
For a compact symplectic manifold $(M, \omega)$, the group of locally Hamiltonian diffeomorphisms coincides with the identity component of the symplectomorphism group. This follows from the fact that every symplectic vector field is locally Hamiltonian. 
\end{example}
\begin{example}[Zero Poisson manifolds]
	For the zero Poisson manifold $(M,0)$, all of the above subgroups are trivial, while $\Diff(M, \pi)=\Diff(M)$. 
\end{example}
\begin{example}[Poisson manifolds of constant rank]
	Let $(M, \pi)$ be a Poisson manifold for which the bivector $\pi$ has constant rank, with underlying symplectic foliation $(\FF, \omega)$. In general, $\Diff(M, \pi)$ is larger than $\Fol(M, \pi)$, and $\Hamloc(M, \pi)$ is smaller than (the identity-component of) $\Fol(M,\pi)$. The latter can be seen, if they were Lie groups, on the level of Lie algebras. The Lie algebra $\mf{fol}(M, \pi)$ is isomorphic via $\omega^\flat$ to the space $\Omega^1_{\mathrm{cl}}(\FF)$ of closed \textit{foliated} one-forms, while $\Hamloc(M, \pi)$ is isomorphic to the space of foliated one-forms that admit a closed extension in $\Omega^1(M)$. 
\end{example}

In this paper, we investigate the existence of Lie group structures on the group of Poisson diffeomorphisms and on its subgroups for several classes of Poisson manifolds.

\subsection*{Coisotropic bisections of Poisson groupoids}

When $(M, \omega)$ is a symplectic manifold, a manifold structure on $\Diff(M, \omega)$ can be obtained in the following way. First, symplectomorphisms on $(M, \omega)$ correspond to Lagrangian submanifolds of $(M\times M, \rar{\omega}-\lar{\omega})$. Second, by means of Weinstein's Lagrangian neighbourhood theorem, the symplectic form $\rar{\omega}-\lar{\omega}$ can be linearized around the graph of a symplectomorphism. Finally, the linearization of $\omega$ around the graph also linearizes the deformation space of the Lagrangian submanifold: the Lagrangian submanifolds near the graph of a symplectomorphism form a closed subspace of the sections of the normal bundle. This `models' the symplectomorphism group on an infinite-dimensional locally convex space, namely $\Omega^1_{\mathrm{cl}, c}(M)$, the space of compactly supported closed one-forms on $M$ (see \cite{krieglmichor1997}, section 43, for more details). 

For a general Poisson manifold $(M, \pi)$, things become much more complicated, partly because of the intransitive nature on both the global and local level of the Poisson diffeomorphism groups. For example, points on non-symplectomorphic leaves can not be permuted, as any Poisson diffeomorphism sends each symplectic leaf symplectomorphically onto another leaf.

A useful framework to capture intransitive symmetries in differential geometry is provided by Lie groupoids and Lie algebroids. The group of bisections $\Bis(\GG)$ of a Lie groupoid $\GG\toto M$ naturally acts on $M$. Moreover, it is well-known that $\Bis(\GG)$ comes with a natural Lie group structure \cite{Rybicki2002, SchmedingWockel2015}. We will see in many examples that the bisection group of an appropriate Lie groupoid corresponds to an intransitive subgroup of $\Diff(M)$. 

When $(\GG, \Pi)\toto (M, \pi)$ is a Poisson groupoid \cite{weinstein1988}, the group of coisotropic bisections $\Bis(\GG, \Pi)$ acts by Poisson diffeomorphisms on $(M, \pi)$. This puts the object $\Diff(M, \pi)$ in a more general perspective: coisotropic bisections of the \textit{pair Poisson groupoid} $(M\times M, \lar{\pi}-\rar{\pi})\toto (M, \pi)$ are in one-to-one correspondence with Poisson diffeomorphisms of $(M, \pi)$.

In \cite{marcut2020}, \Marcut{} constructs an example of a compact Poisson manifold of constant rank for which $\Diff(M, \pi)$ is not locally path-connected in $\Diff(M)$. This is on itself not surprising or any problem --it only tells that $\Diff(M, \pi)$ can not be embedded in $\Diff(M)$-- but it does indicate that the pair Poisson groupoid might not be the right object to investigate, and we must broaden our perspective to involve other Poisson groupoids. However, in general there is no known or obvious Lie group structure on the group of coisotropic bisections. 
\begin{example}[Lagrangian bisections]\label{ex:introlagrangianbisections}
	A particularly important class of Poisson groupoids over a Poisson manifold $(M, \pi)$ are, when integrable, the symplectic groupoids. Let $(\GG, \Omega)\toto (M, \pi)$ be a symplectic groupoid. Its coisotropic bisection are better known as Lagrangian bisections, studied in \cite{Rybicki2001, xu1997}. As explained in more detail in Section \ref{sec:lagrangianbisections}, the group of Lagrangian bisections $\Bis(\GG, \Omega)$ has interesting but complicated interactions with the Poisson diffeomorphism group $\Diff(M, \pi)$. For instance, the identity path component $\Bis_{c, 0}(\GG, \Omega)$ acts on $(M, \pi)$ by locally Hamiltonian diffeomorphisms! In fact, any locally Hamiltonian diffeomorphism comes from a Lagrangian bisection. The map $\Bis_{c, 0}(\GG, \Omega)\to \Diff(M, \pi)$ has a non-trivial kernel in general. 
\end{example}

\subsection*{The Lie group of coisotropic bisections}Let $(\GG,\Pi)\toto (M, \pi)$ be a Poisson groupoid. In order to build charts for a manifolds structure on $\Bis(\GG, \Pi)$, it is necessary to describe the coisotropic deformations of the unit manifold $M\subset (\GG, \Pi)$. It turns out $M$ is always \textit{Lagrangian} in $(\GG, \Pi)$ (Proposition \ref{prop:coisotropicbisections}), in which case it makes sense to ask whether the Poisson structure $\Pi$ is linearizable around it. That is: does there exists a local Poisson diffeomorphism $(\GG, \Pi)\dashrightarrow (NM, \Pi_{\mathrm{lin}})$, that restricts to the identity on $M$? Here, the Poisson structure $\Pi_{\mathrm{lin}}$ is the linearization of $\Pi$ on the normal bundle $NM$ of $M$ in $\GG$, via any tubular neighbourhood (see section \ref{subsec:linearpoissonstructures}). If the answer to this question is positive, we say that the Poisson groupoid $(\GG, \Pi)$ is \textit{linearizable}.

Under the additional assumption of linearizability of a Poisson groupoid, we show that its coisotropic bisection group is a Lie group. The used framework for infinite-dimensional manifolds is the "convenient setting" by Kriegl and Michor \cite{krieglmichor1997}. Our first main result is the following.
\begin{theorem}[Theorem \ref{thm:coisotropicbisectionsliegroup}]\label{thm:introcoisotropicbisections}
	Let $(\GG, \Pi)\toto (M,\pi)$ be a Poisson groupoid with Lie bialgebroid $(\AA, \AA^*)$. Assume that $(\GG, \Pi)$ is linearizable around $M$. Then $\Bis(\GG, \Pi)$ is a regular embedded Lie subgroup of $\Bis(\GG)$ with Lie algebra $\Gamma_c(\AA, d_{\AA^*})=\{ v\in \Gamma_c(\AA): d_{\AA^*} v=0\}$, where $d_{\AA^*}$ is the differential associated to the Lie algebroid $\AA^*\Rightarrow M$.
\end{theorem}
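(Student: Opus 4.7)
The plan is to build a submanifold chart for $\Bis(\GG, \Pi)$ around the unit bisection $M$ using the linearization hypothesis, and then to propagate it throughout the group by left translation.

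First, I would invoke the standard chart for $\Bis(\GG)$ around the identity: a choice of tubular neighbourhood of $M$ in $\GG$ yields a diffeomorphism from a neighbourhood of $0$ in $\Gamma_c(\AA)$ (using the canonical identification $NM \cong \AA$) onto a neighbourhood of the unit in $\Bis(\GG)$, $v \mapsto \sigma_v$; this is the chart constructed in \cite{Rybicki2002, SchmedingWockel2015}. Under the linearizability hypothesis I may take this tubular neighbourhood to arise from a Poisson diffeomorphism $\Phi\colon (\mathcal{U}, \Pi) \xrightarrow{\sim} (\mathcal{V}, \Pi_{\mathrm{lin}})$ that restricts to the identity on $M$. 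Then $\sigma_v$ is coisotropic in $(\GG, \Pi)$ if and only if the image $v(M)\subset \AA$ is coisotropic for the linear Poisson structure $\Pi_{\mathrm{lin}}$.

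The key technical step is to identify which sections $v\in \Gamma(\AA)$ are coisotropic for $\Pi_{\mathrm{lin}}$. Since $M$ is Lagrangian in $(\GG,\Pi)$ (Proposition \ref{prop:coisotropicbisections}), the zero section of $(\AA,\Pi_{\mathrm{lin}})$ is Lagrangian, so any section of the same dimension that is coisotropic is in fact Lagrangian. The linear Poisson structure on $\AA$ is determined by the Lie algebroid $\AA^*$, and a direct computation using the natural pairing between $\AA$ and $\AA^*$ should translate the Lagrangian condition for the graph of $v$ into the cocycle equation $d_{\AA^*}v=0$, viewing $v\in \Gamma(\AA)$ as a $1$-cochain for the Lie algebroid $\AA^*$. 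This generalizes the familiar fact that, in the symplectic case $\AA=T^*M$, $\AA^*=TM$, Lagrangian sections of $T^*M$ are precisely closed one-forms, which matches Example \ref{ex:introlagrangianbisections}.

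With this identification in hand, the chart $v \mapsto \sigma_v$ restricts to a bijection between a neighbourhood of $0$ in $\Gamma_c(\AA, d_{\AA^*})$ and a neighbourhood of the unit in $\Bis(\GG, \Pi)$. I would then verify that $\Gamma_c(\AA, d_{\AA^*})$ is a splitting closed subspace of $\Gamma_c(\AA)$, so this gives a submanifold chart in the convenient sense. Because coisotropic bisections form a subgroup and left translation in $\Bis(\GG)$ is smooth and preserves coisotropy, translating this chart by arbitrary coisotropic bisections yields the structure of an embedded Lie subgroup whose tangent space at the unit is $\Gamma_c(\AA, d_{\AA^*})$. For regularity, given a smooth curve $v\colon \RR\to \Gamma_c(\AA, d_{\AA^*})$, I would integrate the associated time-dependent right-invariant vector field on $\GG$ using the known regularity of $\Bis(\GG)$, and observe that the resulting bisections remain coisotropic because the generating vector field is tangent to the symplectic leaves of $\Pi$. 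The main obstacle I foresee is the central identification in the third paragraph: unwinding the bialgebroid data to confirm the Lagrangian/$d_{\AA^*}$-closed equivalence, along with ensuring that the tubular neighbourhood used in the chart is chosen compatibly with the linearization so that the whole construction is intrinsic.
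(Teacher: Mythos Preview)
Your approach is essentially the paper's: use the linearization as a tubular neighbourhood so that the standard bisection chart sends coisotropic bisections exactly to $d_{\AA^*}$-closed sections (the identification you flag as the main obstacle is precisely Proposition \ref{prop:linearcoisotropic}), and then propagate by left translation, which the paper does by noting that $L_\sigma\circ\varphi$ is again a linearization around $\operatorname{im}\sigma$ since $L_\sigma$ is Poisson. Two small points. First, no splitting is needed in the convenient setting; closedness of $\Gamma_c(\AA,d_{\AA^*})$ in $\Gamma_c(\AA)$ already gives an embedded submanifold. Second, for regularity the paper does not integrate directly but applies Lemma \ref{lem:subgroupregular} to the map $p\colon\Bis(\GG)\to\mf{X}^2(\GG)$, $\sigma\mapsto L_\sigma^*(\Pi)$, which is constant on cosets; your direct argument also works, but the reason the flow stays coisotropic is not that the generating vector field is ``tangent to the symplectic leaves'' of $\Pi$, it is that $d_{\AA^*}$-closed sections correspond to right-invariant \emph{Poisson} vector fields (Proposition \ref{prop:coisotropicbisectionsliealgebra}), whose flows are Poisson and hence preserve coisotropic submanifolds.
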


This breaks up the search of Lie group structures on Poisson diffeomorphism groups into two steps, which are both very interesting on their own.
\begin{itemize}[noitemsep, topsep=0em, leftmargin=2cm]
	\item[Step 1.] Find out which Poisson groupoids are linearizable.
	\item[Step 2.] Search for Poisson groupoids whose coisotropic bisections `compute' relevant groups of Poisson diffeomorphisms on the base.  
\end{itemize}
In fact, to apply Theorem \ref{thm:introcoisotropicbisections}, one is often forced to look for Poisson groupoids beyond the product, because of the following result from \cite{smilde2021linearization}.
\begin{theorem}[\cite{smilde2021linearization}, Theorem 3.26]
	Let $(M, \pi)$ be a Poisson manifold. If $(M\times M, \lar{\pi}-\rar{\pi})$ is linearizable around the diagonal, then $\pi$ has constant rank. 
\end{theorem}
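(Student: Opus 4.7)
The strategy is to argue by contrapositive: assume $(M\times M,\lar{\pi}-\rar{\pi})$ is linearizable around $\Delta_M$ via a local Poisson diffeomorphism $\phi$, and deduce that $\pi$ is regular. First, I identify the linear Poisson structure $\Pi_{\mathrm{lin}}$ on $N\Delta_M\cong TM$: the Lie bialgebroid of the pair Poisson groupoid is $(TM,T^*M)$ where $T^*M$ carries the Koszul Lie algebroid structure (anchor $\pi^\sharp$, bracket $[df,dg]=d\{f,g\}$), so $\Pi_{\mathrm{lin}}$ is the linear Poisson structure on $TM$ dual to this algebroid. In particular, the symplectic leaf through $v\in T_yM$ has dimension $2\,\mathrm{rank}(\pi_y)+\dim\mathcal{O}_v$, where $\mathcal{O}_v$ is the coadjoint orbit of the class of $v$ in $\mf{g}_y^*\cong N_yS_y$ under the isotropy Lie algebra $\mf{g}_y:=\ker\pi^\sharp_y$.

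The key tool is the fiberwise Euler vector field $E$ on $TM$: since $\Pi_{\mathrm{lin}}$ is linear in the fibers one has $\mathcal{L}_E\Pi_{\mathrm{lin}}=-\Pi_{\mathrm{lin}}$, so its rank is constant along orbits of $E$ (i.e.\ along rays in the fibers of $TM\to M$). Pulling back by $\phi$ gives a vector field $X:=(\phi^{-1})_*E$ on a neighbourhood of $\Delta_M$ that vanishes on $\Delta_M$, satisfies $\mathcal{L}_X(\lar{\pi}-\rar{\pi})=-(\lar{\pi}-\rar{\pi})$, and whose flow contracts onto $\Delta_M$; consequently the rank of $\lar{\pi}-\rar{\pi}$ is constant along each flow line of $X$, matching the rank of $\Pi_{\mathrm{lin}}$ on the corresponding $E$-ray.

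To exploit this, I fix $y\in M$ and apply the Weinstein splitting $(M,\pi)\cong(S_y\times W,\pi_S\oplus\pi_T)$ near $y$, with $\pi_T(0)=0$ and $\pi_{T,\mathrm{lin}}$ the Lie--Poisson structure on $W\cong\mf{g}_y^*$. For $v=v_\parallel+v_\perp\in T_yS_y\oplus N_yS_y$, the canonical isomorphism $N\Delta_M\cong TM$ forces $\phi^{-1}(tv)\approx(y+tv/2,\,y-tv/2)$ to leading order in $t$, so the Weinstein normal coordinates of the two components are approximately $\pm tv_\perp/2$. For generic $v_\perp$ in the maximal-rank stratum of $\pi_{T,\mathrm{lin}}$, stability of rank gives $\mathrm{rank}(\pi_T|_{\pm tv_\perp/2})=\dim\mathcal{O}_{v_\perp}$, so the total rank of $\lar{\pi}-\rar{\pi}$ along the flow line evaluates to $2\,\mathrm{rank}(\pi_y)+2\dim\mathcal{O}_{v_\perp}$, whereas the rank of $\Pi_{\mathrm{lin}}$ on the $E$-ray is $2\,\mathrm{rank}(\pi_y)+\dim\mathcal{O}_{v_\perp}$. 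Equating yields $\dim\mathcal{O}_{v_\perp}=0$ for generic $v_\perp$, forcing $\mf{g}_y$ to be abelian and $\pi_{T,\mathrm{lin}}=0$; bootstrapping the same identity then gives $\mathrm{rank}(\pi_T|_u)=0$ for all small $u\in W$, so $\pi_T\equiv 0$ near the origin. Since $y$ was arbitrary, $\pi$ has locally constant rank everywhere, i.e.\ $\pi$ is regular.

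The main obstacle is making the rank matching fully rigorous: the identification $\phi^{-1}(tv)\approx(y+tv/2,y-tv/2)$ is only a leading-order statement, and one must use the scaling symmetry provided by $X$ together with the genericity of $v_\perp$ in the maximal-rank stratum to verify that the sub-leading corrections in $t$ do not spoil the pointwise equality of rank values. Once this is under control, the arithmetic obstruction $\dim\mathcal{O}_{v_\perp}=2\dim\mathcal{O}_{v_\perp}$ (for generic $v_\perp$) immediately yields the conclusion.
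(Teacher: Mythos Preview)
This theorem is not proved in the present paper; it is quoted from \cite{smilde2021linearization} (Theorem~3.25). So there is no proof here to compare against, and I can only assess your argument on its own merits.

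Your strategy---compare the rank function of $\Pi_{\mathrm{lin}}$ on $TM$ with that of $\lar{\pi}-\rar{\pi}$ along matched curves---is a reasonable line of attack, but the obstacle you flag is a genuine gap, and the suggested fix does not close it. The problem is that the linearizing diffeomorphism $\phi$ is only required to restrict to the identity on $M$; nothing forces $d\phi^{-1}|_{0_y}$ to send the vertical fibre $T_yM$ to the anti-diagonal. Concretely, writing $d\phi|_{\text{fibre}}(v)=(A(v),B(v))$ with $A-B$ invertible, you only control $\dot w_1(0)-\dot w_2(0)=\mathrm{pr}_W((A-B)v)$, not the individual $\dot w_i(0)=\mathrm{pr}_W(A(v))$ or $\mathrm{pr}_W(B(v))$. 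For a generic choice of $v_\perp$ you therefore cannot guarantee that \emph{both} $\dot w_i(0)$ land in the top coadjoint stratum, so the claimed identity $\mathrm{rank}(\pi_T|_{w_i(t)})=\dim\mathcal O_{v_\perp}$ need not hold for both $i$; at best one of the two summands contributes, and the arithmetic $\dim\mathcal O_{v_\perp}=2\dim\mathcal O_{v_\perp}$ collapses. The bootstrap step has the same defect: once $\mathfrak g_y$ is abelian you deduce $r_{x_1(v)}=r_{x_2(v)}=r_y$ along the embedded copy of $T_yM$, but the projections $v\mapsto x_i(v)$ need not be submersions onto $M$, so this does not force $r_x=r_y$ on a full neighbourhood of $y$.

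A more robust entry point is to compare the \emph{isotropy Lie algebras} of the two Poisson structures at the matched points $(y,y)$ and $0_y$: the transverse Poisson structure of $\lar{\pi}-\rar{\pi}$ at $(y,y)$ has linearization $\mathfrak g_y\times\mathfrak g_y$, whereas that of $\Pi_{\mathrm{lin}}$ at $0_y$ has linearization the tangent Lie algebra $T\mathfrak g_y=\mathfrak g_y\ltimes_{\mathrm{ad}}\mathfrak g_y^{\mathrm{ab}}$. A Poisson diffeomorphism forces these Lie algebras to be isomorphic, which already rules out many non-regular $\pi$ (e.g.\ log-symplectic, where $\mathfrak g_y$ is the $2$-dimensional non-abelian algebra). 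When $\mathfrak g_y$ is abelian one must go further and compare the full germs of the transverse Poisson structures, for instance via the dimensions of their rank strata; in the model case $\pi_T=x^2\partial_x\wedge\partial_y$ the rank-$0$ locus of $\pi_T\times(-\pi_T)$ has codimension~$2$, while that of the tangent lift has codimension~$1$, and this discrepancy obstructs linearizability.
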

The linearization problem of Poisson groupoids has been investigated in \cite{smilde2021linearization}, where it is shown that integrations of triangular Lie algebroids of so-called cosymplectic type, as well as dual integrations of arbitrary triangular Lie bialgebroids are always linearizable. We recall these results in more detail in Section \ref{sec:linearization}.

Building on this work, we show in this paper that several classes of Poisson manifolds come with relevant Poisson groupoids that turn out to be linearizable. This way, we obtain Lie group structures on their groups of Poisson diffeomorphisms. The results are collected in the list of examples at the end of the introduction.

\subsection*{Lie groups of Poisson diffeomorphisms}
When a Poisson structure $\pi$ on $M$ `lifts' (in the sense of the diagram below) to a Poisson structure $\pi_\AA\in \Gamma(\wedge^2\AA)$ on a Lie algebroid $\AA\Rightarrow M$, integrations of $\AA$ are particularly relevant to the group of Poisson diffeomorphisms. 
\[
\begin{tikzcd}
	\AA^* \arrow[r, "\pi_\AA^\sharp"] &  \AA \arrow[d]\\
	T^*M \arrow[u] \arrow[r, "\pi^\sharp"] & TM
\end{tikzcd}
\]
We mainly focus on the case that $\AA$ is an almost injective Lie algebroid, in which case we obtain two results of a different kind. The first is about inner (Poisson) automorphisms, and the second one involves the ($\AA$-)locally Hamiltonian diffeomorphisms. 
\subsubsection*{Inner automorphisms of almost injective Lie algebroids} A Lie algebroid $\AA\Rightarrow M$ is almost injective when the anchor map is injective on the level of sections. In this case, the bisections of the holonomy groupoid of $\AA$ \cite{androulidakiszambon2017, debord2000} compute the inner automorphisms of $\AA$. Any automorphism of an almost injective Lie algebroid is determined by its base map, so the inner automorphisms $\InnAut(\AA)$ form a subgroup of $\Diff(M)$. These observations combine to the following.  
\begin{theorem}[Corollary \ref{cor:innautliesubgroup}]\label{thm:introinneraut}
	Let $\AA\Rightarrow M$ be an almost injective Lie algebroid. Then the group $\InnAut_c(\AA)$ is naturally a Lie group with Lie algebra $\Gamma_c(\AA)$ for which the inclusion $\InnAut_c(\AA)\to \Diff(M)$ is a smooth immersion. 
\end{theorem}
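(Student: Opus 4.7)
The plan is to realise $\InnAut_c(\AA)$ as a quotient by a discrete subgroup of the Lie group of bisections of the holonomy groupoid. Since $\AA$ is almost injective, it is integrable and its minimal integration is the holonomy groupoid $\GG(\AA)\toto M$ \cite{debord2000}, which is a Lie groupoid with Lie algebroid $\AA$. By the classical results \cite{Rybicki2002, SchmedingWockel2015}, the bisection group $\Bis_c(\GG(\AA))$ is a regular convenient Lie group with Lie algebra $\Gamma_c(\AA)$. The remark in the introduction that bisections of $\GG(\AA)$ compute inner automorphisms of $\AA$ translates to the fact that the natural target-composition map
\[
\Phi \colon \Bis_c(\GG(\AA))\longrightarrow \Diff_c(M), \qquad \sigma\longmapsto t\circ\sigma,
\]
is a group homomorphism with image exactly $\InnAut_c(\AA)$. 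The task is then to check that $\ker\Phi$ is a discrete normal subgroup and that the induced inclusion of the quotient into $\Diff(M)$ is an immersion.

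The key point is the discreteness of $\ker\Phi$. An element $\sigma\in\ker\Phi$ satisfies $s\circ\sigma=t\circ\sigma=\id_M$, so its image lies in the isotropy subgroupoid of $\GG(\AA)$. Because $\AA$ is almost injective, the anchor is injective on an open dense subset $U\subset M$, and over $U$ the isotropy of $\GG(\AA)$ is trivial. Hence $\sigma$ must agree with the unit bisection on $U$. Using the chart model of $\Bis_c(\GG(\AA))$ around the unit --- obtained by exponentiating small compactly supported sections of $\AA$ through the exponential map of $\GG(\AA)$ --- a $\sigma\in\ker\Phi$ close enough to the unit bisection is forced, by continuity and density, to coincide with it on all of $M$. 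This shows that $\ker\Phi$ is discrete, and so the quotient $\InnAut_c(\AA)=\Bis_c(\GG(\AA))/\ker\Phi$ inherits a convenient Lie group structure with the same Lie algebra $\Gamma_c(\AA)$.

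Finally, the induced map $\InnAut_c(\AA)\to\Diff(M)$ is injective and its derivative at the identity is the anchor map $\rho_\ast\colon\Gamma_c(\AA)\to\mf{X}_c(M)$. Almost injectivity of $\AA$ implies $\rho_\ast$ is injective, and since $\InnAut_c(\AA)$ is locally modelled on $\Gamma_c(\AA)$ via a slice transverse to $\ker\Phi$, this yields the smooth immersion property in the convenient setting. The main obstacle is precisely the discreteness of $\ker\Phi$: the holonomy groupoid is generally non-Hausdorff exactly at the points where the anchor drops rank, so the density/continuity argument must be executed inside the explicit chart of $\Bis_c(\GG(\AA))$ rather than on $\GG(\AA)$ itself, and the compact-support condition must be respected throughout.
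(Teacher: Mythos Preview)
Your approach is essentially the paper's --- realise $\InnAut_c(\AA)$ via bisections of the holonomy groupoid and use that the tangent of the inclusion into $\Diff(M)$ is the injective anchor $\rho$ --- but you take an unnecessary detour. The paper invokes the characterisation of $\Hol(\AA)$ (Proposition~\ref{prop:holonomygroupoid}, from \cite{androulidakiszambon2017}) which identifies two arrows precisely when local bisections through them share the same target map; it follows at once that $\chi_*:\Bis(\Hol(\AA))\to\Diff(M)$ is \emph{injective}, so $\ker\Phi$ is trivial and no quotient is needed. Your density-in-a-chart argument is correct and in fact proves the same triviality near the unit, but it is the argument the paper reserves for a \emph{general} integration $\GG$ of $\AA$ (Remark~\ref{rk:anyintegration}), where the kernel really is only discrete. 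One small imprecision: the image of $\Phi$ on all of $\Bis_c(\GG(\AA))$ need not coincide with $\InnAut_c(\AA)$, which by definition consists of time-$1$ flows of time-dependent sections; you should restrict to the identity component $\Bis_{c,0}$, as the paper does.
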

	
A \textit{k-cosymplectic structure} on a Lie algebroid $\AA\Rightarrow M$ consists of closed one-forms $\alpha_1, \dots, \alpha_k\in \Omega^1(\AA)$ and a closed 2-form $\omega\in \Omega^2(\AA)$ such that $\alpha_1\wedge\dots\wedge\alpha_k\wedge\omega\neq 0$ and $\AA=\FF\oplus \ker \omega$, with $\FF=\cap_i\ker \alpha_i$. The subbundle $\FF\subset \AA$ is involutive and thus can be regarded as a subalgebroid. Note that $\omega\vert_\FF$ is non-degenerate, and thus induces an $\AA$-Poisson structure $\pi_\AA$ via
\[
\begin{tikzcd}
	\FF^* & \FF \arrow[l, "\left(\omega\vert_\FF\right)^\flat"'] \arrow[d] \\
	\AA^* \arrow[u] \arrow[r, "\pi_\AA"] & \AA.
\end{tikzcd}
\]
We call an $\AA$-Poisson structure $\pi_\AA$ of ($k$-)cosymplectic type if it is induced by some ($k$-)cosymplectic structure on $\AA$.	
\begin{theorem}[Theorem \ref{thm:almostinjectivePoisson}]
	Let $(\AA, \pi_\AA)\Rightarrow M$ be a Lie algebroid with an $\AA$-Poisson structure of cosymplectic type. Then $\InnAut_c(\AA, \pi_\AA)$ is an embedded Lie subgroup of $\InnAut_c(\AA)$ with Lie algebra $\Gamma_c(\AA, \pi_\AA)=\left \{ v\in \Gamma_c(\AA):[\pi_\AA, v]=0\right \}$.
\end{theorem}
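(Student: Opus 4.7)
The plan is to realize $\InnAut_c(\AA, \pi_\AA)$ as the image of the group of coisotropic bisections of an integrating Poisson groupoid of $(\AA, \pi_\AA)$, and then deduce the Lie subgroup structure by combining Theorem~\ref{thm:introcoisotropicbisections} with Theorem~\ref{thm:introinneraut}.

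First I would observe that an $\AA$-Poisson structure $\pi_\AA$ turns $(\AA, \AA^*)$ into a \emph{triangular} Lie bialgebroid, with the Lie algebroid structure on $\AA^*$ being the one induced by $\pi_\AA$. The associated algebroid differential on $\Gamma(\wedge^\bullet \AA)$ is $d_{\AA^*} = [\pi_\AA, \cdot\,]$, so the cocycle condition appearing in Theorem~\ref{thm:introcoisotropicbisections} becomes exactly $[\pi_\AA, v] = 0$, identifying
\[
\Gamma_c(\AA, d_{\AA^*}) = \{ v \in \Gamma_c(\AA) : [\pi_\AA, v] = 0 \} = \Gamma_c(\AA, \pi_\AA).
\]

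Next, I would take the holonomy groupoid $\GG \toto M$ of the almost injective Lie algebroid $\AA$ and equip it with the Poisson structure $\Pi = \lar{\pi_\AA} - \rar{\pi_\AA}$ coming from the triangular structure, making $(\GG, \Pi)$ into a Poisson groupoid with bialgebroid $(\AA, \AA^*)$. Since $\pi_\AA$ is of cosymplectic type, the linearization results of \cite{smilde2021linearization} guarantee that $(\GG, \Pi)$ is linearizable around~$M$. Theorem~\ref{thm:introcoisotropicbisections} then yields that $\Bis_c(\GG, \Pi)$ is a regular embedded Lie subgroup of $\Bis_c(\GG)$ with Lie algebra $\Gamma_c(\AA, \pi_\AA)$.

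To descend to the level of inner automorphisms, I would use the natural surjective homomorphism $q : \Bis_c(\GG) \to \InnAut_c(\AA)$ sending a bisection to its induced inner automorphism; because $\AA$ is almost injective, $\ker q$ is discrete, and this quotient is precisely the construction underlying the Lie group structure in Theorem~\ref{thm:introinneraut}. The goal is then to show that a bisection $\sigma$ is coisotropic in $(\GG, \Pi)$ exactly when $q(\sigma)$ preserves $\pi_\AA$, which gives $\Bis_c(\GG, \Pi) = q^{-1}(\InnAut_c(\AA, \pi_\AA))$; the embedded Lie subgroup structure on the source then descends through the discrete quotient to present $\InnAut_c(\AA, \pi_\AA)$ as an embedded Lie subgroup of $\InnAut_c(\AA)$ with the claimed Lie algebra.

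The main obstacle is this last equivalence: coisotropy of $\sigma$ in $(\GG, \Pi)$ versus preservation of $\pi_\AA$ by $q(\sigma)$. The ``only if'' direction is standard, since coisotropic bisections of a Poisson groupoid act by automorphisms of the associated bialgebroid and therefore preserve $\pi_\AA$. The ``if'' direction is the delicate point; I expect it to follow by observing that both conditions are governed infinitesimally by the same equation $[\pi_\AA, v] = 0$, so that the local model provided by linearizability (the machinery behind Theorem~\ref{thm:introcoisotropicbisections}) recovers coisotropy of $\sigma$ from preservation of $\pi_\AA$ by the induced automorphism, after which everything assembles into the stated embedded Lie subgroup.
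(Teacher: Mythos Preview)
Your overall architecture matches the paper's: take the holonomy groupoid $\GG=\Hol(\AA)$ with $\Pi=\lar{\pi_\AA}-\rar{\pi_\AA}$, invoke the cosymplectic linearization result, apply Theorem~\ref{thm:introcoisotropicbisections}, and then identify $\Bis_c(\GG,\Pi)$ with $\InnAut_c(\AA,\pi_\AA)$ inside $\Bis_c(\GG)\cong\InnAut_c(\AA)$.

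The one place where you diverge from the paper is the ``if'' direction of the equivalence \emph{$\sigma$ coisotropic $\Leftrightarrow$ $q(\sigma)$ preserves $\pi_\AA$}, which you flag as delicate and propose to handle via the infinitesimal equation plus linearizability. This route is not clearly sufficient: the linearized chart tells you which \emph{sections} of $\AA$ correspond to coisotropic bisections, but it does not by itself characterize which \emph{bisections} correspond to $\pi_\AA$-preserving automorphisms, so the argument as sketched does not close. The paper instead resolves both directions simultaneously and pointwise, using almost-injectivity rather than linearization: the groupoid anchor $\chi:\Hol(\AA)\to M\times M$ is a Poisson map (with respect to $\Pi$ and $\pi\times(-\pi)$) and, because $\AA$ is almost injective, is a local diffeomorphism over the dense isomorphism locus of $\rho_\AA$. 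Hence $\operatorname{im}\sigma$ is coisotropic in $(\Hol(\AA),\Pi)$ if and only if $\chi(\operatorname{im}\sigma)=\gr(l_\sigma)$ is coisotropic in $(M\times M,\pi\times(-\pi))$, which by Proposition~\ref{prop:poissonmapcoisotropic} is exactly the condition that $l_\sigma$ is Poisson; almost-injectivity again upgrades this to preservation of $\pi_\AA$. So the equivalence is a direct consequence of almost-injectivity and needs no appeal to the local model.

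One small correction: for the holonomy groupoid specifically, $q:\Bis_c(\Hol(\AA))\to\InnAut_c(\AA)$ is injective (Proposition~\ref{prop:holonomygroupoid} implies bisections of $\Hol(\AA)$ are determined by their base map), so there is no discrete kernel to quotient by; the identification is already a bijection.
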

Under additional assumptions, the groups $\InnAut_c(\AA)$ and $\InnAut_c(\AA, \pi_\AA)$ are initial in $\Diff(M)$. Concretely, this means that a curve $\varphi_t$ in $\InnAut_c(\AA)$ is smooth if and only if the map $(t, x)\mapsto \varphi(t, x)$ is. 

\subsubsection*{Locally Hamiltonian diffeomorphisms} Recall from example \ref{ex:introlagrangianbisections} that the group of Lagrangian bisections of a symplectic groupoid surjects onto the group of locally Hamiltonian diffeomorphisms of its underlying Poisson manifold. In case that $\pi$ is non-degenerate almost everywhere, then the surjection has a discrete kernel. This lays at the basis of the proof of the following result. 
\begin{theorem}[Theorem \ref{thm:hamlocinitial}]\label{thm:introhamlocinitial}
	Let $\AA\Rightarrow M$ be an almost injective Lie algebroid and let $\pi_\AA$ be a generically non-degenerate Poisson structure on $\AA$. Then $\Hamlocc(\AA, \pi_\AA)$ is a Lie group with Lie algebra $(\Omega^1_{\mathrm{cl},c}(\AA), [\cdot, \cdot]_{\pi_\AA})$.
\end{theorem}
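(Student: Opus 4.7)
The strategy is to realize $\Hamlocc(\AA,\pi_\AA)$ as a discrete quotient of a Lie group of Lagrangian bisections, obtained from Theorem \ref{thm:introcoisotropicbisections} applied to a suitable symplectic groupoid. The first step is to produce that groupoid. I would show that the dual Lie algebroid $\AA^*_{\pi_\AA}$ arising from the triangular Lie bialgebroid structure is itself almost injective: its anchor $\rho_{\AA}\circ\pi_\AA^\sharp$ is the composition of an isomorphism on a dense open (by generic non-degeneracy of $\pi_\AA$) with an injection on a dense open (by almost injectivity of $\AA$). Hence $\AA^*_{\pi_\AA}$ is integrable, and the general construction of the dual integration of a triangular Lie bialgebroid yields a Hausdorff, source-connected symplectic groupoid $(\GG,\Omega)\toto M$ integrating $\AA^*_{\pi_\AA}$.

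Next, the linearization theorem for dual integrations of triangular Lie bialgebroids recalled in the introduction (from \cite{smilde2021linearization}) ensures that $(\GG,\Omega)$ is linearizable around $M$. Theorem \ref{thm:introcoisotropicbisections} then equips the group $\Bis_c(\GG,\Omega)$ of compactly supported Lagrangian bisections with a regular Lie group structure whose Lie algebra is $\Gamma_c(\AA^*_{\pi_\AA}, d_\AA)$. Since the underlying vector bundle of $\AA^*_{\pi_\AA}$ is $\AA^*$ and since the Lie bracket on sections of $\AA^*_{\pi_\AA}$ is by construction the Koszul-type bracket $[\cdot,\cdot]_{\pi_\AA}$ on $\Omega^1(\AA)$, this Lie algebra is exactly $(\Omega^1_{\mathrm{cl},c}(\AA),[\cdot,\cdot]_{\pi_\AA})$.

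Finally, I would descend along the surjection from Example \ref{ex:introlagrangianbisections}: sending $\sigma\mapsto t\circ\sigma$ maps the identity component $\Bis_{c,0}(\GG,\Omega)$ onto $\Hamlocc(\AA,\pi_\AA)$. The main obstacle is to show that the kernel of this map is a \emph{discrete} normal subgroup. A Lagrangian bisection acting trivially on the base takes values in the isotropy of $\GG$; on the dense open subset where the induced Poisson structure on $M$ is non-degenerate, this isotropy is discrete, forcing such a bisection to lie in a locally constant sheet of isotropy elements. Almost injectivity of $\AA^*_{\pi_\AA}$ together with continuity of the bisection then extends the locally constant behaviour across the singular locus. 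With the kernel discrete, standard convenient-category arguments give $\Hamlocc(\AA,\pi_\AA)$ the structure of a Lie group with the stated Lie algebra, via the smooth quotient map.
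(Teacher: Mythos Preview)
Your approach matches the paper's proof closely. One correction: the dual integration $(\GG^*,\Pi)$ of the triangular bialgebroid $(\AA,\AA^*,\pi_\AA)$ is a \emph{Poisson} groupoid, not a symplectic one (its Lie algebroid is $\AA^*$, not $T^*M$, unless $\AA=TM$); since Theorem~\ref{thm:introcoisotropicbisections} applies to general Poisson groupoids, the argument goes through unchanged with ``coisotropic'' in place of ``Lagrangian''. For discreteness of the kernel the paper argues slightly more cleanly: it first observes that the kernel $K$ of $\Bis_{c,0}(\GG^*)\to\Diff(M)$ from the \emph{full} bisection group is already discrete (because $\AA^*$ is almost injective, so $\GG^*\to\Hol(\AA^*)$ is a local diffeomorphism near $M$), and then uses that $\Bis_{c,0}(\GG^*,\Pi)$ is embedded in $\Bis_{c,0}(\GG^*)$ to conclude that $K_\Pi = K\cap\Bis_{c,0}(\GG^*,\Pi)$ is discrete as well---this avoids your hands-on isotropy argument entirely.
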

The bracket $[\cdot, \cdot]_{\pi_\AA}$ on $\Gamma(\AA^*)=\Omega^1(\AA)$ is induced by the $\AA$-Poisson structure $\pi_\AA$ (Remark \ref{rk:exactbialgebroids}), and the group $\Hamlocc(\AA, \pi_\AA)$ consists of $\AA$-locally Hamiltonian Poisson automorphisms (see Section \ref{sec:algebraicaspects}).

Note that Theorem \ref{thm:introhamlocinitial} applies in particular to the almost-regular Poisson structures as studied by Androulidakis and Zambon in \cite{androulidakiszambon2017}. 

\subsubsection*{Applications} Concrete applications of our approach are collected in the following list of examples.

\begin{example}[Symplectic foliations] Let $(M, \pi)$ be a Poisson manifold of constant rank with associated symplectic foliation $(\FF, \omega)$. Then $\Fol(M, \pi)$ is a initial Lie subgroup of $\Diff(M)$, with Lie algebra $\Omega^1_{\mathrm{cl},c}(\FF)$ of closed, compactly supported one-forms on $\FF$. Note that this is isomorphic via $\omega^\flat$ to the Lie algebra of foliated vector fields $\fol_c(M, \pi)$. This Lie group structure coincides with the one in \cite{Rybicki2001foliated}. The relevant groupoid is the holonomy groupoid $\Hol(\FF)\toto M$ of the foliation $\FF$.
\end{example}
\begin{example}[Cosymplectic manifolds]
	Let $(M, \pi)$ be a Poisson manifold of $k$-cosymplectic type. Its Poisson diffeomorphism group $\Diff(M, \pi)$ is an embedded Lie subgroup of $\Diff(M)$, with Lie algebra $\mf{X}_c(M, \pi)$ consisting of compactly supported Poisson vector fields.
	When all the symplectic leaves are embedded in $M$, then $\Fol(M, \pi)$ becomes an embedded Lie subgroup of $\Diff(M, \pi)$. 
	
	For this example, the relevant groupoids are the pair Poisson groupoid $(M\times M, \pi\times (-\pi))\toto M$ and the holonomy groupoid of the underlying foliation.
\end{example}
\begin{example}[Log-symplectic manifolds]
	Let $(M, \pi)$ be a normal-crossing log-symplectic manifold. All the groups $\Diff(M, \pi)$, $\Fol(M, \pi)$ and $\Hamlocc(M, \pi)$ are initial Lie subgroups of $\Diff(M)$, whose Lie algebras are isomorphic to $\mf{X}_c(M, \pi)$, $\mf{fol}_c(M, \pi)$ and $\hamlocc(M, \pi)$ respectively. In fact, the space $\fol_c(M, \pi)$ is isomorphic to $\hamlocc(M, \pi)$ and therefore $\Hamlocc(M, \pi)$ coincides with the identity component of $\Fol(M, \pi)$. The relevant groupoids here are the integrations of the \textit{log-tangent bundle} $T_ZM$ and the symplectic groupoid.
\end{example}
\begin{example}[Elliptic symplectic manifolds]
	When $(M, \pi)$ is an elliptic symplectic manifold \cite{Cavalcantigualtieri2017}, then both $\Diff(M, \pi)$, $\Fol(M, \pi)$ and $\Hamlocc(M, \pi)$ are initial Lie subgroups of $\Diff(M)$, with Lie algebras $\mf{X}_c(M, \pi)$, $\fol_c(M, \pi)$ and $\hamlocc(M,\pi)$, respectively. If $(M, \pi)$ has non-zero elliptic residue, then $\Fol(M, \pi)$ is open in $\Diff(M, \pi)$. If $(M, \pi)$ has zero elliptic residue, then $\Hamlocc(M, \pi)$ coincides with the identity component of $\Fol(M, \pi)$. As in the previous example, the relevant groupoids are integrations of the elliptic tangent bundle as well as the symplectic groupoids. 
\end{example}
\begin{example}[Scattering-symplectic manifolds] The class of scattering-symplectic Poisson structures \cite{lanius2020} can also be treated with our approach, with a small caveat: it requires specific linearization results. More precisely, we prove a Lagrangian neighbourhood theorem for scattering-symplectic manifolds in Appendix \ref{app:scattering}.
	\begin{theorem}[Theorem \ref{thm:sclagrangianneighbourhood}]
		Let $(M, \pi)$ be a scattering symplectic manifold, and $i:L\rightarrow M$ a Lagrangian submanifold transverse to the degeneracy locus $Z\subset M$. Then $\pi$ is linearizable around $Z$.
	\end{theorem}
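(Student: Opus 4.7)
The plan is to construct an explicit local Poisson diffeomorphism from a neighbourhood of $Z$ in $(M,\pi)$ onto a neighbourhood of the zero section of the scattering normal bundle $NZ$ equipped with the linearization $\pi_{\mathrm{lin}}$, using the Lagrangian $L$ to fix a canonical transversal and to kill the cohomological obstructions that arise in the scattering complex. The strategy follows the classical Moser scheme, but carried out in the scattering Lie algebroid framework of Lanius rather than for ordinary forms.

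First, I would translate the problem into the scattering-symplectic picture: $\pi$ corresponds to a symplectic form $\omega$ on the scattering tangent bundle ${}^{\mathrm{sc}}TM$, and $\pi_{\mathrm{lin}}$ corresponds to the leading-order scattering $2$-form $\omega_{\mathrm{lin}}$ along $Z$. Since $L$ is transverse to $Z$, the intersection $L_0:=L\cap Z$ is a hypersurface of $L$, and $L$ is Lagrangian in the scattering sense. I would then choose a scattering vector field $X$ transverse to $Z$ and tangent to $L$, whose flow from $L_0$ yields a tubular neighbourhood map $\Phi\colon U\subset NZ\to M$ with $\Phi|_Z=\mathrm{id}$ and $\Phi(NZ|_{L_0}\cap U)\subset L$. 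This identifies a neighbourhood of $Z$ in $M$ with a neighbourhood of the zero section in $NZ$ in a way compatible with the transverse Lagrangian.

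Next, I would run a relative Moser argument on the family $\omega_t:=(1-t)\,\omega_{\mathrm{lin}}+t\,\Phi^*\omega$ of scattering-symplectic forms on $U$. Both endpoints agree to first order along $Z$, so $\sigma:=\tfrac{d}{dt}\omega_t=\Phi^*\omega-\omega_{\mathrm{lin}}$ is a closed scattering $2$-form vanishing at $Z$, and $\omega_t$ remains non-degenerate on a neighbourhood of $Z$. Using a Poincar\'e lemma for the scattering de Rham complex adapted to the pair $(Z,L)$, I would write $\sigma=d\alpha$ with $\alpha$ a scattering $1$-form vanishing on $Z$ and pulling back to zero on $L$. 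Solving $\iota_{X_t}\omega_t=-\alpha$ then produces a time-dependent scattering vector field $X_t$ vanishing along $Z$ and tangent to $L$; its flow $\psi_t$ is well-defined on a neighbourhood of $Z$, and the composition $\Phi\circ\psi_1$ furnishes the required Poisson linearization around $Z$.

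The main obstacle is the Poincar\'e lemma in the scattering setting. The scattering de Rham cohomology near $Z$ is in general non-trivial, so one cannot simply contract $\sigma$ to zero, and a naive primitive $\alpha$ may fail to vanish on $Z$ — which would spoil the regularity of $X_t$ and prevent the Moser flow from being defined up to time $1$. The transverse Lagrangian $L$ is exactly what unlocks this: its transversality provides a scattering-compatible retraction of a neighbourhood of $Z$ onto $L_0$, and its Lagrangianity forces $\sigma$ to be relatively exact with a primitive that can be chosen to vanish on $L$. Proving this relative, $L$-adapted Poincar\'e lemma in the scattering complex is the technical heart of the argument and is what distinguishes this Lagrangian neighbourhood theorem from its log-symplectic and elliptic analogues.
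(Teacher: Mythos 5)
There is a mismatch between the statement you were handed and the theorem the paper actually proves, and your proposal follows the statement into a dead end. The introduction's phrasing ``linearizable around $Z$'' is a typo for ``linearizable around $L$'': the theorem in the body (the $sc$-Lagrangian neighbourhood theorem) produces a Poisson diffeomorphism from a neighbourhood of $L$ in $M$ to a neighbourhood of $L$ in $NL$ carrying $\pi$ to $\pi_{\mathrm{lin}}=\lim_{t\to 0}t\,m_t^*\pi$, fixing $L$. Your entire architecture --- a tubular neighbourhood of $Z$, the normal bundle $NZ$, a Moser family interpolating to a ``leading-order form along $Z$'' --- is aimed at the wrong submanifold. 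Worse, with the paper's definition of linearization (fibrewise rescaling on the normal bundle), the literal statement ``$\pi$ is linearizable around $Z$'' is false: sections of ${}^{sc}T_ZM$ are locally generated by $x^2\partial_x$ and $x\partial_{y_i}$, so $\pi$ vanishes to second order along $Z$ and its linearization around $Z$ is the \emph{zero} bivector, to which $\pi$ is certainly not locally isomorphic. The content of the theorem can only be, and is, the statement about $L$.

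Two steps of your construction also fail on their own terms. First, there is no scattering vector field transverse to $Z$: the anchor of ${}^{sc}T_ZM$ vanishes along $Z$, so every scattering vector field (indeed every log-vector field) is tangent to $Z$, and its flow cannot sweep out a tubular neighbourhood of $Z$. The paper instead obtains its tubular neighbourhood of $L$, adapted to $Z$, from the splitting theorem for the log-tangent bundle. Second, your relative Poincar\'e lemma asks for more than is available or needed: a scattering primitive $\alpha$ of $\sigma$ vanishing on $Z$, forcing the Moser field to vanish along $Z$. The paper's actual mechanism is different and is the technical heart of the proof: contracting radially onto $L$ gives a primitive $\psi=\int_0^1\tfrac{1}{t}m_t^*(\iota_{\mathcal{E}}(\omega_1-\omega_0))\,dt$ of $\omega_1-\omega_0$ which is \emph{not} a scattering form, but $x\psi$ is, because $x\mathcal{E}$ is a scattering vector field when $\mathcal{E}$ is the Euler vector field of $NL$. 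One then solves $\iota_{\tilde X_t}\omega_t=-x\psi$ for scattering fields $\tilde X_t$ and divides by $x$ to get \emph{log}-vector fields $X_t$ --- tangent to $Z$ but not vanishing there --- which do vanish along $L$ because $\psi\vert_L=0$; hence the time-$1$ flow exists on a neighbourhood of $L$, fixes $L$, preserves $Z$, and intertwines the two scattering forms. Combined with the observation that $\omega_{\mathrm{lin}}=\lim_{t\to0}m_t^*\omega/t$ on $NL$ is a well-defined linear scattering form agreeing with $\omega$ along $L$ (using that $m_t$ is a log-, hence scattering-, morphism and $m_0^*\omega=0$ by Lagrangianity of $L$), this yields the theorem. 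In short: the obstruction is resolved at $L$, not at $Z$, and the Moser field is log rather than scattering.
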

\noindent As a result, we show $\Diff(M, \pi)$ is an initial Lie subgroup of $\Diff(M)$, integrating the Lie algebra $\mf{X}_c(M, \pi)$. Interestingly, the relevant groupoid here is an integration of the log-tangent bundle $T_ZM$, and not an integration of the scattering-tangent bundle.
\end{example}

\subsection*{Acknowledgements} This article, combined with \cite{smilde2021linearization}, is based on the author's master's thesis at Utrecht University. I would like to thank my master's thesis advisor Ioan \Marcut{} for his guidance, many helpful conversations, and for introducing me to central problem of this paper. Further, I want to thank Aldo Witte for many useful discussions, and Rui Loja Fernandes for comments on several drafts. Finally, I thank the referee for extensive feedback on an earlier version of this paper.

\section{Poisson geometry: Lagrangians and diffeomorphisms}

\subsection{Lagrangian submanifolds} First, we recall the definition of a coisotropic submanifold.
\begin{definition}
	Let $(M, \pi)$ be a Poisson manifold. A submanifold $C\subset M$ is \textit{coisotropic} when $\pi(\alpha_1, \alpha_2)=0$ for all $\alpha_1, \alpha_2\in (TC)^\circ$, where $(TC)^\circ\subset T^*M$ is the annihilator of $TC$. Equivalently, $(TC)^{\bot_\pi}:=\pi^\sharp\left( (TC)^\circ\right) \subset TC$.
\end{definition}

Coisotropic submanifolds play a crucial role in this paper, because of the following proposition, which is folklore.
\begin{proposition}[\cite{weinstein1988}, Corollary 2.2.3]\label{prop:poissonmapcoisotropic}
	Let $(M, \pi_N)$ and $(N, \pi_N)$ be Poisson manifolds. A map $f:M\to N$ is a Poisson map if and only if the graph of $f$ is coisotropic in $(N\times M, \pi_N\times (-\pi_M))$. 
\end{proposition}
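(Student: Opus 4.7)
The statement is essentially an unwinding of definitions, so the plan is purely computational: I would identify the annihilator of the tangent bundle of $\Gamma_f \subset N\times M$ explicitly, evaluate the product Poisson bivector on pairs of such annihilator covectors, and match the resulting identity with the defining relation $\wedge^2 T_xf(\pi_{M,x})=\pi_{N,f(x)}$ of a Poisson map.

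Concretely, first I would fix a point $(f(x),x) \in \Gamma_f$ and describe
\[
T_{(f(x),x)}\Gamma_f = \{(T_xf\cdot v, v) : v\in T_xM\},
\]
whose annihilator is
\[
(T_{(f(x),x)}\Gamma_f)^\circ = \{(\alpha, -(T_xf)^*\alpha) : \alpha\in T^*_{f(x)}N\},
\]
since the pairing of $(\alpha,\beta)$ with $(T_xf\cdot v,v)$ reads $\alpha(T_xf\cdot v)+\beta(v)$. This gives a convenient $\alpha$-parametrisation of $(T\Gamma_f)^\circ$.

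Next I would evaluate the product bivector $\pi_N\times(-\pi_M)$ on two such covectors $(\alpha_1,-(T_xf)^*\alpha_1)$ and $(\alpha_2,-(T_xf)^*\alpha_2)$, obtaining
\[
\pi_{N,f(x)}(\alpha_1,\alpha_2) - \pi_{M,x}\bigl((T_xf)^*\alpha_1,(T_xf)^*\alpha_2\bigr).
\]
By definition of the pushforward of bivectors, the second term equals $\wedge^2 T_xf(\pi_{M,x})(\alpha_1,\alpha_2)$. Thus coisotropicity of $\Gamma_f$ at $(f(x),x)$ is exactly the identity $\pi_{N,f(x)}(\alpha_1,\alpha_2)=\wedge^2 T_xf(\pi_{M,x})(\alpha_1,\alpha_2)$ for all $\alpha_1,\alpha_2\in T^*_{f(x)}N$, which is the pointwise condition for $f$ to be Poisson. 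Quantifying over $x\in M$ closes both directions simultaneously.

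There is no real obstacle: the only subtlety is bookkeeping the sign in $\pi_N\times(-\pi_M)$ (which is exactly what makes coisotropicity pick out Poisson, rather than anti-Poisson, maps) and the fact that $(T_xf)^*\alpha$ appears with a minus sign in the parametrisation of the annihilator, so that the two minus signs cancel. I would present the argument as a single short chain of equivalences, making these signs explicit.
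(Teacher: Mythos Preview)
Your argument is correct and is the standard direct verification. Note, however, that the paper does not actually prove this proposition: it is stated as ``folklore'' with no proof given, so there is nothing to compare against. Your unwinding of the annihilator of $T\Gamma_f$ and the sign bookkeeping are exactly right, and the chain of equivalences you describe is the canonical way to establish this fact.
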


\begin{example} The graph of a Poisson diffeomorphism $\varphi:M\to M$ is a coisotropic submanifold in $(M\times M, \pi\times (-\pi))$. There is more to it: the intersection of the graph of $\varphi$ with the symplectic leaves of $(M\times M, \pi\times(-\pi))$ is either empty or a Lagrangian submanifold of the leaf. This hints to a more restrictive class of coisotropic submanifolds, suitable for the study of Poisson diffeomorphisms.
\end{example} 

\begin{definition}[\cite{vaisman1994}, Remark 7.8]\label{def:lagrangian}
	Let $(M, \pi)$ be a Poisson manifold. A submanifold $L\subset M$ is \textit{Lagrangian} when
	\[
	(TL)^{\bot_\pi}:= \pi^\sharp\left( (TL)^\circ\right)=TL\cap \operatorname{im}\pi^\sharp. \qedhere
	\]
\end{definition}
The following lemma provides practical characterizations of Lagrangian submanifolds. 
\begin{lemma}\label{lem:lagrangiancharaterization}
	Let $(M, \pi)$ be a Poisson manifold and $L\subset M$ a submanifold. The following are equivalent:
	\begin{itemize}[noitemsep, topsep=0em]
		\item[\textit{(i).}] The submanifold $L$ is a Lagrangian submanifold of $M$.
		\item[\textit{(ii).}] For every symplectic leaf $(S, \omega_S)$, the intersection $L\cap S$ is a Lagrangian submanifold of $(S, \omega_S)$ satisfying $T(L\cap S)=TL\cap TS$.
		\item[\textit{(iii).}] The submanifold $L$ is coisotropic and $\pi(\alpha_1, \alpha_2)=0$ for all $\alpha_1, \alpha_2\in (\pi^\sharp)^{-1}(TL)$. 
	\end{itemize}
\end{lemma}
\begin{proof}
  \textit{(i). $\Leftrightarrow$ (ii).} Clearly, \textit{(ii).} implies \textit{(i).} For the converse, recall that, because $L$ is coisotropic, the annihilator $(TL)^\circ$ becomes a subalgebroid of the cotangent algebroid $T^*M$, that we call the conormal algebroid. We equip the path-components of $L\cap S$ with the structure of a manifold by identifying them with the leaves of the algebroid $(TL)^\circ\Rightarrow L$. As $(TL)^\circ \Rightarrow$ is a subalgebroid of $T^*M\Rightarrow M$, every leaf $\tilde{L}$ of $(TL)^\circ\Rightarrow L$ is contained in a symplectic leaf $S$. Conversely, if $\gamma:\RR\to L\cap S$ is a path, it must be tangent to $L$ and $S$ simultaneously, i.e. $\dot{\gamma}(t)\in T_{\gamma(t)}L\cap T_{\gamma(t)} S=T_{\gamma(t)}L\cap \Im \pi^\sharp_{\gamma(t)}$ for all $t$. Since $\pi^\sharp((TL)^\circ)=TL\cap \Im \pi^\sharp$, it follows that $\gamma$ is tangent to a leaf $\tilde{L}$ of the conormal algebroid $(TL)^\circ\Rightarrow L$.

	\textit{(i). $\Rightarrow$ (iii).} A Lagrangian submanifold $L$ is clearly coisotropic. Moreover, for $\alpha_1, \alpha_2\in (\pi^\sharp)^{-1}(TL)$ there exists $\tilde{\alpha}_1, \tilde{\alpha}_2\in (TL)^\circ$ with $\pi^\sharp(\alpha_1)=\pi^\sharp(\tilde{\alpha}_1)$ and $\pi^\sharp(\alpha_2)=\pi^\sharp(\tilde{\alpha}_2)$. In particular, $\pi(\alpha_1, \alpha_2)=\pi(\tilde{\alpha}_1, \tilde{\alpha}_2)=0$.
	
	\textit{(iii). $\Rightarrow$ (i).} To prove the converse, we have to find for $\alpha\in (\pi^\sharp)^{-1}(TL)$ an element $\tilde{\alpha}\in (TL)^\circ$ such that $\pi^\sharp(\alpha)=\pi^\sharp(\tilde{\alpha})$. Since $\alpha\vert_{TL\cap\operatorname{im} \pi^\sharp}=0$, we can construct $\tilde{\alpha}$ by extending $\alpha\vert_{\operatorname{im}\pi^\sharp}$ by 0 over $TL$. 
\end{proof}
\begin{remark}
	An analogue for \text{(ii).} in Lemma \ref{lem:lagrangiancharaterization} does not exist for arbitrary coisotropic submanifolds. A counterexample is the following. Consider $\RR^3$, with coordinates $(x, y, z)$, foliated by the planes of constant $z$ with a foliated symplectic form $dx\wedge dy$. The graph $\Gamma$ of $f(x, y)=x^2+y^2$ is coisotropic in $\RR^3$ with respect to this Poisson structure, but the intersection is not clean at the origin. Indeed, the intersection of $\Gamma$ with the $z=0$ plane is just a point, while the intersection of the tangent spaces is two-dimensional. This is also the only point where the Lagrangian condition fails. 
\end{remark}

\subsubsection{Linear Poisson structures and the linearization problem for Lagrangian submanifolds}\label{subsec:linearpoissonstructures}
Let $E \to L$ be a vector bundle, with scalar multiplication $m_t:E\to E$. A Poisson structure $\pi$ on the total space of $E$ is linear when $t m_t^*\pi=\pi$ for all $t\in \RR\setminus \{0\}$. We will see as part of Proposition \ref{prop:coisotropicbisections} that the zero section must be a Lagrangian submanifold of $(E, \pi)$. If $\pi$ is any Poisson structure on the total space of $E$ for which the zero section $L\subset E$ is a Lagrangian, we can call $\pi$ \textit{linearizable} around $L$ when there is a local Poisson diffeomorphism 
\[
(E, \pi)\dashrightarrow (E, \pi_{\mathrm{lin}}), \quad \mbox{with } \pi_{\mathrm{lin}}=\lim_{t\to 0} t m^*_t \pi,
\]
that restricts to the identity on $L$. More generally, if $(M, \pi)$ is any Poisson manifold, and $L\subset (M, \pi)$ a Lagrangian, we can call $(M, \pi)$ linearizable around $L$ if $(NL, \psi^* \pi)$ is linearizable around $L$ for some (hence every) tubular neighbourhood $\psi:NL\to U\subset M$ of $L$. 
\begin{question}
	Let $L\subset (M, \pi)$ be a Lagrangian submanifold. When is $(M,\pi)$ linearizable around $L$?
\end{question}
We have addressed this question in more detail in \cite{smilde2021linearization}, with emphasis on the case that the ambient manifold is a Poisson groupoid, and the Lagrangian is the unit section. As in \cite{smilde2021linearization}, we adopt the following terminology.
\begin{definition}
	A Poisson groupoid $(\GG, \Pi)\toto (M, \pi)$ is \textit{linearizable} when the Poisson structure $\Pi$ is linearizable around the unit space $M\subset (\GG, \Pi)$. 
\end{definition}

\subsection{Poisson Lie algebroids}
A Lie algebroid $\AA\Rightarrow M$ over $M$ consists of a vector bundle $\AA\to M$ together with an anchor $\rho_\AA:\AA\to TM$ and a Lie bracket $[\cdot, \cdot]_\AA$ on $\Gamma(\AA)$ subject to the Leibniz rule: $[v, fw]_\AA=f[v, w]_\AA+\LL_{\rho_\AA(v)}(f) w$. Due to the presence of the bracket, a Lie algebroid behaves much like the tangent bundle of a manifold $M$, and therefore it is often the case that geometry on $TM$ can by `lifted' to a Lie algebroid. In this section we introduce Poisson Lie algebroids as the `lifts' of Poisson structures (see also \cite{klaasse2018}).

Recall that the bracket of a Lie algebroid $\AA\Rightarrow M$ can be extended to a Schouten-Nijenhuis bracket on $\Gamma(\wedge^\bullet\AA)$. 
\begin{definition}
	Let $\AA\Rightarrow M$ be a Lie algebroid. A \textit{Poisson structure on $\AA$}, or an \textit{$\AA$-Poisson structure}, is a section $\pi_\AA\in \Gamma(\wedge^2\AA)$ satisfying $[\pi_\AA, \pi_\AA]_\AA=0$. The pair $(\AA, \pi_\AA)\Rightarrow M$ is referred to as a \textit{Poisson Lie algebroid.}
\end{definition}
\begin{remark}\label{rk:exactbialgebroids}
	A Lie bialgebroid $(\AA, \AA^*)$ consists of a vector bundle $\AA$ with Lie algebroid structures on $\AA$ and $\AA^*$ that are compatible in the following sense:
	\[
	d_{\AA^*}[v, w]_\AA=\left[ d_{\AA^*}v, w\right]_\AA+\left[v, d_{\AA^*}w\right]_\AA
	\]
	for all $v, w\in \Gamma(\AA)$. Given a Poisson Lie algebroid $(\AA, \pi_\AA)\Rightarrow M$, the $\AA$-Poisson structure $\pi_\AA$ anchors $\AA^*$ to $\AA$ via the sharp map determined by $\beta(\pi^\sharp_\AA(\alpha))=\pi_\AA(\alpha, \beta)$ for $\alpha, \beta\in \AA^*$. Also, it induces a bracket on $\Gamma(\AA^*)$ via the formula
	\[
	[\alpha, \beta]_{\pi_\AA}= \LL_{\pi_\AA^\sharp(\alpha)}(\beta)-\LL_{\pi_\AA^\sharp(\beta)}(\alpha)-d_\AA \left(\pi_\AA(\alpha, \beta)\right).
	\] 
Together with the anchor map $\rho_{\AA^*}=\rho_\AA\circ\pi_\AA^\sharp$, the vector bundle $\AA^*$ becomes a Lie algebroid whose differential on $\Gamma(\wedge^\bullet\AA)$ corresponds to $[\pi_\AA, \cdot]$, from which easily follows that the pair $(\AA, \AA^*)$ is a Lie bialgebroid. The data $(\AA, \AA^*, \pi_\AA)$ constitutes a triangular Lie bialgebroid, introduced in \cite{mackenziexu1994}. In the context of Lie bialgebras, the $\AA$-Poisson structure $\pi_\AA$ is called the $r$-matrix.
\end{remark}

Let $(\AA, \pi_\AA)\Rightarrow M$ be a Poisson Lie algebroid. It induces a Poisson structure $\pi$ on $TM$ via the following diagram.
\[
\begin{tikzcd}
	\AA^* \arrow[r, "\pi_\AA^\sharp"] & \AA \arrow[d, "\rho_\AA"]\\
	T^*M \arrow[u, "\rho_\AA^*"] \arrow[r, "\pi^\sharp"] & TM
\end{tikzcd}
\]
Therefore, it is natural to refer to $\pi_\AA$ as the \textit{lift of $\pi$ to $\AA$.}
\begin{definition}
	Let $\AA\Rightarrow M$ be a Lie algebroid. A \textit{symplectic structure} on $\AA$, or an \textit{$\AA$-symplectic structure}, is a closed, non-degenerate two form $\omega_\AA\in \Omega^2(\AA)$. In this case, we call $(\AA, \omega_\AA)$ is a \textit{symplectic Lie algebroid}.
\end{definition}
If $\pi_\AA$ is a non-degenerate Poisson structure on $\AA$, then the 2-from $\omega_\AA\in \Omega^2(\AA)$ defined by
\[
\omega_\AA^\flat=\left(\pi_\AA^\sharp\right)^{-1},
\]
with $\omega_\AA^\flat(v) (w)=\omega_\AA(v, w)$ for $v,w\in \AA$, is closed, and therefore symplectic. We write $\omega_\AA=\pi_\AA^{-1}$. 
\begin{lemma}
Let $\AA\Rightarrow M$ be a Lie algebroid. Then there is a one-to-one correspondence between non-degenerate $\AA$-Poisson structures and symplectic structures on $\AA$, sending $\pi_\AA$ to $\omega_\AA=\pi_\AA^{-1}$. 
\end{lemma}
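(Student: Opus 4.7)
The plan is to mimic the classical proof that non-degenerate Poisson structures on a manifold correspond bijectively to symplectic forms, adapting each step to the Lie algebroid setting. First, I would establish the bijection on the level of raw tensors by a purely pointwise linear-algebra argument: a non-degenerate $\pi_\AA\in\Gamma(\wedge^2\AA)$ gives a bundle isomorphism $\pi_\AA^\sharp:\AA^*\to\AA$, whose inverse defines an antisymmetric, non-degenerate $\omega_\AA^\flat:\AA\to\AA^*$, hence a 2-form $\omega_\AA\in\Omega^2(\AA)$. The reverse assignment, sending a non-degenerate $\omega_\AA$ to the bivector with sharp $(\omega_\AA^\flat)^{-1}$, is manifestly its two-sided inverse.

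The content of the lemma is then the equivalence of the two integrability conditions, $[\pi_\AA,\pi_\AA]=0$ iff $d_\AA\omega_\AA=0$. The key step is the pointwise identity
\[
\tfrac{1}{2}[\pi_\AA,\pi_\AA](\alpha_1,\alpha_2,\alpha_3) = -(d_\AA\omega_\AA)(v_1,v_2,v_3),
\]
for $\alpha_i\in\Gamma(\AA^*)$ and $v_i=\pi_\AA^\sharp(\alpha_i)$. I would verify this by expanding the right-hand side using Koszul's formula for the Lie algebroid differential $d_\AA$, together with $\alpha_i=\omega_\AA^\flat(v_i)$ and the induced bracket $[\cdot,\cdot]_{\pi_\AA}$ on $\Gamma(\AA^*)$ from Remark \ref{rk:exactbialgebroids}, and expanding the left-hand side using the Schouten--Nijenhuis bracket formula for bivectors on $\Gamma(\wedge^\bullet\AA)$. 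Both expansions produce the same terms of the form $\rho_\AA(v_i)\pi_\AA(\alpha_j,\alpha_k)$ and $\pi_\AA([\alpha_j,\alpha_k]_{\pi_\AA},\alpha_i)$, modulo signs and cyclic permutations. Since $\pi_\AA^\sharp$ is a bundle isomorphism, the pointwise equality immediately yields the claimed equivalence.

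The main obstacle is purely the bookkeeping in this expansion, and it is not conceptual: the Schouten--Nijenhuis bracket and the Lie algebroid differential depend formally only on the anchor, the bracket on $\Gamma(\AA)$, and the Leibniz rule, all of which obey the same abstract identities as on $TM$. Consequently the classical computation transports verbatim once one reads $\Gamma(TM)$ as $\Gamma(\AA)$ and replaces the usual vector-field action on functions by $\rho_\AA$. No new analytic or geometric input beyond the Poisson Lie algebroid framework already set up above is required.
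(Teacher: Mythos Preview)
Your proposal is correct. The paper does not actually prove this lemma: it only asserts in the sentence preceding the statement that the inverse 2-form is closed, and then states the bijection without further argument, treating the result as standard. Your outline supplies precisely the missing details, namely the pointwise identity relating $[\pi_\AA,\pi_\AA]$ to $d_\AA\omega_\AA$ under the isomorphism $\pi_\AA^\sharp$, and your observation that the computation is formally identical to the $\AA=TM$ case is exactly why the paper feels free to omit it. There is nothing to compare here beyond noting that you have filled in what the paper left implicit.
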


\subsubsection{Poisson Lie algebroids of cosymplectic type}
In this section we consider a class of $\AA$-Poisson structures of constant rank whose transverse geometry is relatively simple. These will come back in later applications.
\begin{definition}[\cite{osornotorresphd}, Definition 3.2.23]
	Let $\AA\Rightarrow M$ be a Lie algebroid. A \textit{cosymplectic structure} of \textit{type $k$} (or $k$-cosymplectic structure) consists of closed one-forms $\alpha_1, \dots, \alpha_k\in \Omega^1(\AA)$ and a closed 2-form $\omega\in \Omega^2(\AA)$ of constant rank $2n$ such that $\omega^n\wedge\alpha_1\wedge\dots\wedge\alpha_k$ is a nowhere vanishing top-form on $\AA$.
\end{definition}
A cosymplectic structure $(\alpha_1, \dots, \alpha_k, \omega)$ on $\AA\Rightarrow M$ induces an isomorphism
\[
\flat:\AA\to \AA^*, \quad v\mapsto \iota_v\omega+\sum_{i=1}^k\alpha_i(v)\alpha_i,
\]
called the \textit{flat map}. The Reeb sections of the cosymplectic structures are given by $R_i:=\flat^{-1}(\alpha_i)$

The 2-form $\omega$ restricts to an $\FF$-symplectic structure $\omega_\FF=\omega\vert_\FF$ on the $\AA$-foliation $\FF=\cap_{i=1}^k \ker \alpha_i$, and thus determines an $\AA$-Poisson structure $\pi_\AA$ via the diagram
\[
\begin{tikzcd}
	\FF^* & \FF \arrow[l, "\omega_\FF^\flat"'] \arrow[d] \\
	\AA^* \arrow[u] \arrow[r, "\pi^\sharp_\AA"] & \AA.
\end{tikzcd}
\]
\begin{definition}\label{def:cosymplectictype}
	Let $(\AA, \pi_\AA)\Rightarrow M$ be a Poisson algebroid. We call $\pi_\AA$ of \textit{$\AA$-cosymplectic type} if there exists a cosymplectic structure on $\AA$ inducing $\pi_\AA$.
\end{definition}
\begin{remark}
	The Reeb sections $R_i$ of a cosymplectic structure inducing $\pi_\AA$ are `Poisson sections' meaning that  $[\pi_\AA, R_i]=0$.
\end{remark}

\subsection{Algebraic aspects of the Poisson diffeomorphisms group}\label{sec:algebraicaspects}
Throughout this section, we fix a Poisson manifold $(M, \pi)$. A \textit{Poisson diffeomorphism} of $(M, \pi)$ is a diffeomorphism $\varphi\in \Diff(M)$ that is also a Poisson map, i.e. $\varphi_*(\pi)=\pi$. They form the group $\Diff(M, \pi)$ of Poisson diffeomorphisms. A vector field $X\in \mf{X}(M)$ is \textit{Poisson} when $\LL_X(\pi)=0$. They form a subalgebra $\mf{X}(M, \pi)$ of $\mf{X}(M)$ under the Lie bracket. 

The Poisson manifold $(M, \pi)$ has an underlying symplectic foliation, leading to the group of \textit{foliated Poisson diffeomorphisms} $\Fol(M, \pi)$, consisting of the maps that send each symplectic leaf symplectomorphically to itself. Infinitesimally, the \textit{foliated Poisson vector fields}, constituting a subalgebra $\mf{fol}(M, \pi)$ of $\mf{X}(M, \pi)$, are those Poisson vector fields that are tangent to the symplectic foliation. 

The Lie algebra $\mf{X}(M, \pi)$ has two more interesting subalgebras. Let $\Omega^1_{\mathrm{cl}}(M)$ and $\Omega^1_{\mathrm{ex}}(M)$ be the closed and exact one-forms on $M$, respectively. We introduce the Lie algebras of Hamiltonian and locally Hamiltonian vector fields, respectively, as
\[
\mf{ham}(M, \pi)=\pi^\sharp\left( \Omega^1_{\mathrm{ex}}(M)\right), \quad \mbox{and}\quad  \mf{ham}_{\mathrm{loc}}(M, \pi)=\pi^\sharp\left(\Omega^1_{\mathrm{cl}}(M)\right).
\]
A diffeomorphism is \textit{(locally) Hamiltonian} if is the time-1 flow of a (locally) Hamiltonian vector field generated by a time-dependent (closed) exact one-form. These give rise to the group $\operatorname{Ham}_{\mathrm{(loc)}}(M, \pi)$ of (locally) Hamiltonian diffeomorphisms. Clearly, there are inclusions
\[
\begin{tikzcd}
\ham(M, \pi)\arrow[r, hook] &\hamloc(M, \pi)\arrow[r, hook] & \fol(M, \pi)\arrow[r, hook] & \mf{X}(M, \pi),\\
\Ham(M, \pi)\arrow[r, hook] & \Hamloc(M, \pi)\arrow[r, hook] & \Fol(M, \pi) \arrow[r, hook] & \Diff(M, \pi).
\end{tikzcd}
\]
\begin{proposition}
	The groups $\Fol(M, \pi)$, $\Hamloc(M, \pi)$ and $\Ham (M, \pi)$ are normal subgroups of $\Diff(M, \pi)$.  
\end{proposition}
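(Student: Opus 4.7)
The plan is to verify normality of each of the three subgroups by a direct computation based on the conjugation identity for flows, $\varphi\circ\phi^{X_t}_s\circ\varphi^{-1}=\phi^{\varphi_*X_t}_s$, valid for any (time-dependent) vector field $X_t$ and diffeomorphism $\varphi$. The key algebraic fact I will use repeatedly is that a Poisson diffeomorphism $\varphi$ intertwines the sharp maps: for any one-form $\alpha$ on $M$,
\[
\varphi_*\bigl(\pi^\sharp(\alpha)\bigr)=\pi^\sharp\bigl((\varphi^{-1})^*\alpha\bigr),
\]
which is equivalent to the Poisson condition $\varphi_*\pi=\pi$. In particular, specializing to $\alpha=df$ gives $\varphi_*X_f=X_{f\circ\varphi^{-1}}$.

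For $\Fol(M,\pi)$: given $\varphi\in\Diff(M,\pi)$ and $\psi\in\Fol(M,\pi)$, I will use that $\varphi$ permutes the symplectic leaves (as a Poisson diffeomorphism sends each leaf symplectomorphically onto a leaf). If $S$ is any leaf, then $\varphi^{-1}(S)$ is also a leaf, and $\psi$ fixes it setwise; applying $\varphi$ gives $(\varphi\psi\varphi^{-1})(S)=S$. Since $\varphi\psi\varphi^{-1}$ is certainly a Poisson diffeomorphism, it lies in $\Fol(M,\pi)$.

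For $\Ham(M,\pi)$ and $\Hamloc(M,\pi)$: given $\psi=\phi^{X_t}_1$ where $X_t=\pi^\sharp(\alpha_t)$ for a time-dependent one-form $\alpha_t$ (closed in the locally Hamiltonian case, exact in the Hamiltonian case), I will compute
\[
\varphi\psi\varphi^{-1}=\phi^{\varphi_*X_t}_1=\phi^{\pi^\sharp((\varphi^{-1})^*\alpha_t)}_1.
\]
Since $(\varphi^{-1})^*$ commutes with $d$, the family $(\varphi^{-1})^*\alpha_t$ is again closed (resp. exact) and time-dependent, so $\varphi\psi\varphi^{-1}$ is locally Hamiltonian (resp. Hamiltonian).

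I do not expect a serious obstacle; the only point meriting care is the conjugation formula for time-dependent flows, which I will justify by the uniqueness of solutions to the non-autonomous ODE governing the flow, combined with the fact that $\varphi\circ\phi^{X_t}_s\circ\varphi^{-1}$ solves the same initial value problem as $\phi^{\varphi_*X_t}_s$. Once this is in place, the three normality statements reduce to short symbolic manipulations using the displayed intertwining identity.
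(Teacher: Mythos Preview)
Your approach to normality is exactly the paper's: the same intertwining identity $\varphi_*\bigl(\pi^\sharp(\alpha)\bigr)=\pi^\sharp\bigl((\varphi^{-1})^*\alpha\bigr)$ (the paper writes the right-hand side as $\pi^\sharp(\varphi_*\alpha)$) combined with the conjugation formula for flows, and for $\Fol$ the observation that Poisson diffeomorphisms permute symplectic leaves. So on the normality side you are aligned with the paper.

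There is one point you skip that the paper does address: before discussing normality, the paper verifies that $\Ham(M,\pi)$ and $\Hamloc(M,\pi)$ are actually \emph{subgroups}, i.e.\ closed under composition. This is not entirely automatic from the definition as time-$1$ flows. The paper's argument is that if $\varphi_t,\psi_t$ are isotopies generated by $\pi^\sharp(\alpha_t),\pi^\sharp(\beta_t)$, then $\varphi_t\circ\psi_t$ is generated by
\[
\pi^\sharp(\alpha_t)+(\varphi_t)_*\pi^\sharp(\beta_t)=\pi^\sharp\bigl(\alpha_t+(\varphi_t)_*\beta_t\bigr),
\]
using that $\varphi_t$ is Poisson; hence the composite is again (locally) Hamiltonian. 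This step uses precisely the same intertwining identity you already isolated, so it costs you almost nothing to include, but as written your proposal only treats conjugation and not closure under products.
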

\begin{proof}
	Establishing that $\Ham(M, \pi)$ and $\Hamloc (M, \pi)$ are actually subgroups goes via the usual argument: if $\varphi_t$ and $\psi_t$ are isotopies generated by (locally) Hamiltonian vector fields $\pi^\sharp(\alpha_t)$ and $\pi^\sharp(\beta_t)$, respectively, then $\varphi_t\circ\psi_t$ is generated by \[
	\pi^\sharp(\alpha_t)+(\varphi_t)_*\pi^\sharp(\beta_t)=\pi^\sharp(\alpha_t+(\varphi_t)_*\beta_t),
	\]
	which is again (locally) Hamiltonian. 
	
	The foliated diffeomorphisms clearly form a normal subgroup of $\Diff(M, \pi)$. Normality of $\Ham(M, \pi)$ and $\Hamloc(M, \pi)$ also follows from the identity $\varphi_*(\pi^\sharp(\alpha))=\pi^\sharp(\varphi_*(\alpha))$, which holds for general $\varphi\in \Diff(M, \pi)$ and $\alpha\in \Omega^1(M)$.
\end{proof}
\begin{lemma}\label{lem:poissonisotopy}
	Let $\varphi_t$ be an isotopy on $M$ generated by $X_t$. Then $\varphi_t$ is a path in $\Diff(M, \pi)$ (resp. in $\Fol(M, \pi)$) if and only if $X_t$ is in $\mf{X}(M,\pi)$ (resp. in $\fol(M, \pi)$). 
\end{lemma}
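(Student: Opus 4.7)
The plan is to reduce everything to the standard Lie-derivative identity for pullback of tensors along an isotopy, and then treat the foliation-preserving refinement separately using standard facts about singular foliations.

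For the first (Poisson) part, I would start from the well-known identity
\[
\frac{d}{dt}\varphi_t^*\pi = \varphi_t^*\bigl(\LL_{X_t}\pi\bigr),
\]
valid for any isotopy $\varphi_t$ with time-dependent generator $X_t$. Since $\varphi_0=\id$ and $\varphi_t$ is a diffeomorphism, $\varphi_t^*\pi=\pi$ for all $t$ is equivalent to $\frac{d}{dt}\varphi_t^*\pi=0$ for all $t$, which (using invertibility of $\varphi_t^*$) is equivalent to $\LL_{X_t}\pi=0$ for all $t$. This immediately gives $\varphi_t\in\Diff(M,\pi)\Leftrightarrow X_t\in\mf{X}(M,\pi)$.

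For the foliated refinement, I would first reduce to the Poisson case and then handle the leaf-preservation condition. If $\varphi_t\in\Fol(M,\pi)$, then in particular $\varphi_t\in\Diff(M,\pi)$, so by the first part $X_t\in\mf{X}(M,\pi)$. Moreover, for each $x\in M$, the curve $t\mapsto\varphi_t(x)$ stays inside the symplectic leaf $S_x$ through $x$, so differentiating gives $X_t(\varphi_t(x))\in T_{\varphi_t(x)}S_{\varphi_t(x)}$; hence $X_t$ is tangent to the symplectic foliation everywhere and thus lies in $\fol(M,\pi)$. Conversely, suppose $X_t\in\fol(M,\pi)$. By the first part $\varphi_t$ is a Poisson diffeomorphism, and it remains to show that $\varphi_t$ sends each leaf to itself. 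This is the standard fact that a (time-dependent) vector field tangent to the symplectic foliation has a flow preserving each leaf: since $\pi^\sharp(\alpha)$ for $\alpha\in\Omega^1(M)$ generate the foliation and their flows preserve leaves, and any foliated Poisson vector field is locally a $C^\infty(M)$-linear combination of Hamiltonian vector fields (tangency to the — possibly singular — symplectic foliation), one obtains leaf preservation via Stefan--Sussmann. Since $\varphi_t$ is a Poisson diffeomorphism that maps each leaf to itself, its restriction to a leaf is automatically a symplectomorphism of $(S,\omega_S)$, because $\omega_S$ is determined by $\pi$ via $\omega_S(X_f,X_g)=-\{f,g\}|_S$. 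This shows $\varphi_t\in\Fol(M,\pi)$.

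The only delicate point is the leaf-preservation step for foliated vector fields, because the symplectic foliation may be singular. I would invoke the Stefan--Sussmann theorem (or, equivalently, the fact that the characteristic distribution of a Poisson manifold is integrable in the singular sense and its leaves are precisely the symplectic leaves), which guarantees that any smooth section of $\operatorname{im}\pi^\sharp$ has a flow preserving the partition into symplectic leaves. The rest is essentially bookkeeping from the formula for $\frac{d}{dt}\varphi_t^*\pi$.
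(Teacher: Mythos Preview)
The paper does not supply a proof of this lemma; it is stated as a standard fact and left unproved. Your argument is correct and fills in exactly the details one would expect: the Poisson part via the identity $\frac{d}{dt}\varphi_t^*\pi=\varphi_t^*(\LL_{X_t}\pi)$ is the standard proof, and the foliated refinement is handled the natural way.

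One small comment on the backward direction of the foliated case. Your claim that ``any foliated Poisson vector field is locally a $C^\infty(M)$-linear combination of Hamiltonian vector fields'' is stronger than what pointwise tangency to $\operatorname{im}\pi^\sharp$ gives you, and for a genuinely singular foliation this step deserves care (pointwise tangency to a singular distribution is in general weaker than lying in the generating $C^\infty(M)$-module). A cleaner route, which you essentially have available already: from the Poisson part you know each $\varphi_t$ is a Poisson diffeomorphism and therefore permutes symplectic leaves; combined with the fact that the curve $t\mapsto\varphi_t(x)$ has velocity in $\operatorname{im}\pi^\sharp$, continuity in $t$ forces it to stay in the leaf $S_x$. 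This avoids having to express $X_t$ as a module-combination of Hamiltonians. Since the paper leaves the lemma unproved, there is nothing further to compare against.
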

\begin{remark}
	A similar result for the (locally) Hamiltonian groups is hard. For symplectic manifolds, the argument relies on a flux homomorphism, which is not available for general Poisson manifolds. It is one of the interesting open questions regarding the Poisson diffeomorphism group. 
	
	There are other open questions that will not be addressed in this paper. For instance, if a time-dependent vector field $X_t$ is Hamiltonian for all $t$, is it generated by a smooth time-dependent function?
\end{remark}

\begin{remark}
	The infinitesimal algebraic structure of $\mf{X}(M, \pi)$ is encoded in the first Poisson cohomology group. For example,
	\[
	\frac{\mf{X}(M, \pi)}{\ham(M, \pi)}=H^1_\pi(M), \quad \frac{\hamloc(M, \pi)}{\ham(M, \pi)}=\operatorname{im}\left(\pi^\sharp:H^1(M)\to H^1_\pi(M)\right).
	\]
	When $\pi$ has constant rank, and induced by the symplectic foliation $(\mc{F}, \omega)$, then
	\[
	\frac{\fol(M, \pi)}{\ham(M, \pi)}=H^1(\mc{F}), \quad \frac{\fol(M, \pi)}{\hamloc(M, \pi)}=\frac{H^1(\mc{F})}{\operatorname{im}\left( H^1(M)\to H^1(\mc{F})\right)}.
	\]
\end{remark}
\begin{remark}
	There are many more interesting subgroups of $\Diff(M,\pi)$. For instance, it is very natural to study \textit{leafwise Hamiltonian diffeomorphisms}, which are foliated Poisson diffeomorphisms that restrict to a Hamiltonian diffeomorphism on each leaf. All these different subgroups illustrate that the algebraic structure of the Poisson diffeomorphism group is more intricate than the symplectomorphism group of a symplectic manifold: the latter is determined by the Hamiltonian diffeomorphism and (a quotient of) $H^1(M)$ (via a flux homomorphism, see \cite{mcduffsalamon2007}, Chapter 10).
\end{remark}

Many of the concepts introduced in this section can be defined on an arbitrary Poisson Lie algebroid $(\AA, \pi_\AA)\Rightarrow M$, as we will now briefly discuss. A \textit{Poisson section} is a section $v\in \Gamma(\AA)$ such that $\LL_v(\pi_\AA)=[v, \pi_\AA]=0$, defining a subalgebra $\Gamma(\AA, \pi_\AA)$ of $\Gamma(\AA)$. The closed $\AA$-forms, $\Omega^1_{\mathrm{cl}}(\AA)$ form a Lie subalgebra of $\Omega^1(\AA)$ under the bracket on $\AA^*$ (Remark \ref{rk:exactbialgebroids}). Therefore, we obtain a subalgebra of $\Gamma(\AA, \pi_\AA)$, called the $\AA$-locally Hamiltonians, by
\[
\hamloc(\AA, \pi_\AA)=\pi_\AA^\sharp\left(\Omega^1_{\mathrm{cl}}(\AA) \right).
\]
An automorphism of $\AA$ is $\AA$-locally Hamiltonian when it is the time-1 flow of a $\AA$-locally Hamiltonian vector field generated by a time-dependent closed $\AA$-form, forming the group $\Hamloc(\AA, \pi_\AA)$ regarded as a subgroup of the group $\InnAut(\AA)$ of inner automorphisms of $\AA$.

\subsection{Poisson structure of divisor type}\label{sec:divisortype}
To provide context to several central examples considered in this paper, we give a brief summary of the Poisson structures of divisor type developed by Klaasse in \cite{klaasse2018}. 

A (real) \textit{divisor} on a smooth manifold $M$ is a pair $(L, \sigma)$ consisting of a (real) line bundle $L\to M$ and a section $\sigma\in \Gamma(L)$ whose zero locus $Z_\sigma$ is nowhere dense. 

The section $\sigma$ of a divisor $(L, \sigma)$ defines an evaluation map $\sigma:\Gamma(L^*)\to C^\infty(M)$, and the ideal of $C^\infty(M)$ associated to $(L, \sigma)$ is its image $I_\sigma=\sigma(\Gamma(L^*))$. Note that the \textit{vanishing set $Z_{I_\sigma}$} of $I_\sigma$ is exactly $Z_\sigma$. The vanishing ideal $I_{Z_\sigma}$ of $Z_\sigma$ is generally larger than $I_\sigma$. An ideal $I\subset C^\infty(M)$ that comes from a divisor is a \textit{divisor ideal.}

Every divisor ideal is locally principal, as it is generated by $\langle\alpha, \sigma\rangle$, where $\alpha$ is a local trivialization of $L^*$.

\begin{definition}[\cite{klaasse2018}, Definition 3.6]
	A Lie algebroid $\AA\Rightarrow M$ is \textit{of divisor type} when 
	\[
	\operatorname{div}(\AA)=\left(\det(\AA^*)\otimes \det(TM), \det(\rho_\AA)\right)
	\] 
	is a divisor, or, equivalently, when the isomorphism locus of $\rho_\AA$ is dense in $M$. The divisor ideal associated to $\operatorname{div}(\AA)$ is denoted $I_\AA$. 
\end{definition}
\begin{remark}
	A Lie algebroid $\AA\Rightarrow M$ is of divisor type if and only if the isomorphism locus of the anchor $\rho_\AA$, which is the set on which $\rho_\AA$ is an isomorphism, is dense in $M$. 
\end{remark}

Let $I\subset C^\infty(M)$ be an ideal and $\AA\Rightarrow M$ a Lie algebroid. Following \cite{klaasse2018}, we set
\[
\Gamma(\AA)_I=\left\{ v\in \Gamma(\AA): \LL_v(I)\subset I\right \},
\]
the set of sections of $\AA$ that preserve $I$. This is a subalgebra of $\Gamma(\AA)$ (\cite{klaasse2018}, Lemma 3.41). 

When $\Gamma(\AA)_I$ is locally finitely generated and projective, the Serre-Swan theorem gives a vector bundle $\AA_I\to M$ such that $\Gamma(\AA_I)=\Gamma(\AA)_I$. The inclusion of sections determines a bundle map $\rho_{\AA_I}^\AA:\AA_I\to \AA$. Together with the Lie bracket on $\Gamma(\AA_I)$ inherited from $\Gamma(\AA)$, and with anchor $\rho_{\AA_I}=\rho_{\AA_I}^\AA\circ \rho_{\AA}$, it becomes a Lie algebroid anchored to $\AA$.
\begin{definition}[\cite{klaasse2018}, Definition 3.42]
	Let $\AA\Rightarrow M$ be a Lie algebroid and $I\subset C^\infty(M)$ an ideal. If it exists, the Lie algebroid $\AA_I\Rightarrow M$ is called the \textit{(primary) ideal Lie algebroid} associated to $(\AA, I)$. 
\end{definition}
This is often applied in the case that $\AA=TM$, for which the following terminology is introduced in \cite{klaasse2018}. 
\begin{definition}[\cite{klaasse2018}, Definition 3.43]\label{def:projectivedivisor}
	A divisor ideal $I$ is \textit{projective} when the ideal Lie algebroid $T_IM:=TM_I$ exists. It is \textit{standard} when it is projective and $I_{T_IM}=I$. 
\end{definition}
By the Serre-Swan theorem, a divisor ideal $I$ is projective when $\mf{X}(M)_I$ is projective as a $C^\infty(M)$-module \textit{and} when it is locally finitely generated. 

\begin{definition}[\cite{klaasse2018}, Definition 4.3]
 A Poisson structure $(M^{2n}, \pi)$ is of divisor type when $(\wedge^{2n}TM, \wedge^n\pi)$ is a divisor. The associated divisor ideal is denoted by $I_\pi$. 
\end{definition}

One of the main results in \cite{klaasse2018} (Theorem 4.35) is that whenever $I_\pi$ is projective and $T^*_{I_\pi}M$ locally admits bases of closed sections, then $\pi$ lifts to a Poisson structure on $T_{I_\pi}M$. If in addition $I_\pi$ is standard, the lift is non-degenerate. This fact is used in the examples below.

Before we discuss some examples, we want to highlight the following proposition, which explains the significance of the Lie algebroid $T_{I_\pi} M$ (if it exists) associated to a Poisson manifold of divisor type $(M,\pi)$ from the perspective of Poisson diffeomorphisms: because $\Gamma(T_{I_\pi}M)$ contains all the Poisson vector fields, an integration of $T_{I_\pi}M$ can be a good starting point to describe the Poisson diffeomorphism group as a Lie group. This observation will be exploited in Section \ref{sec:divisortypdiffeos}.
\begin{proposition}\label{prop:poissonprojectivedivisor}
Let $(M, \pi)$ be a Poisson manifold of divisor type whose divisor ideal $I_\pi$ is projective. Then any Poisson vector field lifts to a section of $T_{I_\pi}M$. 	
\end{proposition}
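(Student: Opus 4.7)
The plan is to reduce the claim to a direct calculation showing that every Poisson vector field preserves the divisor ideal $I_\pi$, i.e.\ $\LL_X(I_\pi)\subset I_\pi$. Once this is established, the conclusion is immediate from the construction of the ideal Lie algebroid, since by assumption $T_{I_\pi}M$ exists and $\Gamma(T_{I_\pi}M)=\mf{X}(M)_{I_\pi}$, so $X$ is literally a section of $T_{I_\pi}M$ whose image under the anchor $\rho_{T_{I_\pi}M}^{TM}$ is $X$ itself.

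First I would unfold the definition of $I_\pi$. Since $(\wedge^{2n}TM,\wedge^n\pi)$ is the divisor associated to $\pi$, the ideal $I_\pi$ is exactly the image of the evaluation map
\[
\Gamma(\wedge^{2n}T^*M)\longrightarrow C^\infty(M),\qquad \alpha\longmapsto \langle \alpha,\wedge^n\pi\rangle,
\]
so every element of $I_\pi$ is of the form $\langle \alpha,\wedge^n\pi\rangle$ for some top-degree form $\alpha$.

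Next, for a Poisson vector field $X$ I would compute $\LL_X\langle \alpha,\wedge^n\pi\rangle$ using the derivation property of the Lie derivative with respect to the natural pairing between $\wedge^{2n}T^*M$ and $\wedge^{2n}TM$:
\[
\LL_X\langle\alpha,\wedge^n\pi\rangle=\langle \LL_X\alpha,\wedge^n\pi\rangle+\langle \alpha,\LL_X(\wedge^n\pi)\rangle.
\]
The second term vanishes: since $\LL_X\pi=0$ and the Lie derivative is a derivation with respect to the wedge product of multivectors, $\LL_X(\wedge^n\pi)=n\,\wedge^{n-1}\pi\wedge \LL_X\pi=0$. The first term is again of the form $\langle\beta,\wedge^n\pi\rangle$ with $\beta=\LL_X\alpha\in \Gamma(\wedge^{2n}T^*M)$, hence lies in $I_\pi$ by definition. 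Therefore $\LL_X(I_\pi)\subset I_\pi$.

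There is really no hard step here; the whole argument rests on the observation that Poisson vector fields preserve $\wedge^n\pi$ and on moving all the $X$-dependence onto the dual line bundle factor, where it automatically stays within the ideal. The one minor point worth flagging is that the proposition implicitly assumes $T_{I_\pi}M$ exists (which is hypothesized via the projectivity of $I_\pi$), so that $\mf{X}(M)_{I_\pi}=\Gamma(T_{I_\pi}M)$ can be invoked to produce the lift.
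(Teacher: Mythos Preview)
Your proof is correct and is essentially identical to the paper's own argument: both show $\LL_X(I_\pi)\subset I_\pi$ by writing an arbitrary element of $I_\pi$ as $\langle\alpha,\wedge^n\pi\rangle$, applying the Leibniz rule for $\LL_X$, and using $\LL_X(\wedge^n\pi)=0$. The only difference is that you spell out a few details (the derivation property and the final identification $\mf{X}(M)_{I_\pi}=\Gamma(T_{I_\pi}M)$) that the paper leaves implicit.
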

\begin{proof}
	Let $f\in I_\pi$. Then there exists a top-form $\omega\in \Omega^{2n}(M)$ such that $f=\langle \omega, \wedge^n \pi\rangle$. If $X\in \mf{X}(M)$ is a Poisson vector field, then
	\begin{align*}
		\LL_X(f)&=\langle \LL_X(\omega), \wedge^{n}\pi\rangle+\langle \omega, \LL_X(\wedge^n\pi)\rangle =\langle \LL_X(\omega), \wedge^n \pi\rangle\in I_\pi.
	\end{align*}
It follows that $X$ is a section of $T_{I_\pi} M$.
\end{proof}

\begin{example}[Log-symplectic manifolds]\label{ex:logsymplectic}
A \textit{(normal-crossing) log-manifold}\footnote{Also called a \textit{$c$-manifold} in \cite{mirandascott2020}.} is a pair $(M, Z)$ of a manifold $M$ and a (closed) hypersurface $Z\subset M$ that is allowed to have self-intersections. More precisely, around each point $z\in Z$ there are coordinates $(x_1, \dots, x_k, x_{k+1}, \dots, x_n)$ for which $Z=\{\prod_{i=1}^k x_i=0\}$. Note that $k$ need not be fixed, and is allowed to vary between different points in $Z$. We call such coordinates \textit{adapted to $Z$}. When $Z$ is an embedded hypersurface, we refer to the pair $(M, Z)$ as a \textit{smooth} log-manifold. 

The vanishing ideal $I_Z$ is a standard divisor ideal, and a divisor $(L, \sigma)$ with $I_\sigma=I_Z$ is called a \textit{normal-crossing} log divisor. The corresponding Lie algebroid $T_Z M:=T_{I_Z}M$ is called the \textit{log-tangent bundle}. The space of sections of $T_ZM$, called log-vector fields, is denoted by $\mf{X}(M, Z)$ and is locally generated by the vector fields $x_1\del_{x_1}, \dots, x_k\del_{x_k}, \del_{x_{k+1}}, \dots, \del_{x_n}$ in an adapted coordinate chart.

\begin{definition}
	A Poisson structure $\pi$ on $M^{2n}$ is \textit{log-symplectic} when $(\wedge^{2n}TM, \wedge^n \pi)$ is a normal-crossing log divisor. 
\end{definition}
By Theorem 4.35 in \cite{klaasse2018}, a log-symplectic structure lifts to a symplectic structure on the log-tangent bundle $T_ZM$.
\end{example}
\begin{example}[Elliptic symplectic manifolds]\label{ex:ellipticpoisson} A (real) divisor $|D|=(L, \sigma)$ is \textit{elliptic} when $D:=Z_\sigma$ is a codimension 2 submanifold and the normal Hessian $\operatorname{Hess}(\sigma)\in \Gamma(\operatorname{Sym}^2N^*D \otimes L)$ is positive-definite \cite{Cavalcantigualtieri2017}. Its associated ideal is usually denoted by $I_{|D|}$. An ideal $I\subset C^\infty(M)$ is \textit{elliptic} when it is the ideal of some elliptic divisor. 
	
An elliptic ideal $I_{|D|}$ is standard, and the associated Lie algebroid $T_{|D|}M\Rightarrow M$ is called the \textit{elliptic tangent bundle.} For any elliptic divisor, there are coordinates $(x_1, \dots, x_n)$ such that $I_{|D|}=\langle x_1^2+x_2^2\rangle$. Sections of $T_{|D|}M$ are locally generated by $r\del_r, \del_\theta, \del_{x_3}, \dots, \del_{x_n}$. 
\begin{definition}
	A Poisson structure $\pi$ on $M^{2n}$ is \textit{elliptic} when $(\wedge^{2n}TM, \wedge^n\pi)$ is an elliptic divisor. 
\end{definition}
Elliptic symplectic structures appear naturally as the Poisson structure associated to a stable generalized complex structure \cite{Cavalcantigualtieri2017}. By Theorem 4.35 in \cite{klaasse2018}, an elliptic symplectic structure $\pi$ on $M$ lifts to a symplectic structure on $T_{|D|}M$, with $|D|=(\wedge^{2n}TM, \wedge^n\pi)$ (cf. \cite{Cavalcantigualtieri2017}, Lemma 3.4).
\end{example}

\subsection{Scattering-symplectic manifolds}
Let $\AA\Rightarrow M$ be a Lie algebroid. A subalgebroid of $\AA$ is a Lie algebroid $\BB\Rightarrow Z$ together with an injective Lie algebroid homomorphism $(\iota, i):\BB\to \AA$ that makes $i:Z\hookrightarrow M$ into a submanifold of $M$. Usually, we identify a subalgebroid $\BB\Rightarrow Z$ with its image in $\AA$.

Given a subalgebroid $\BB\Rightarrow Z$ of $\AA\Rightarrow M$, we consider the module of sections of $\AA$ that are "tangent" to $\BB$:
\[
\Gamma(\AA, \BB)=\left \{ v\in \Gamma(\AA): v\vert_Z\in \Gamma(\BB)\right \}.
\]
If $Z\subset M$ is a closed hypersurface, this is a locally finitely generated subalgebra of $\Gamma(\AA)$ and therefore by the Serre-Swan theorem corresponds to a vector bundle $[\AA;\BB]\rightarrow M$ that inherits a Lie algebroid structure from $\AA$ \cite{gualtierili2014,lanius2020}. The Lie algebroid $[\AA;\BB]\Rightarrow M$ is the \textit{rescaling} of $\AA$ by $\BB$.

\begin{example}\label{ex:logrescaling}
	Let $(M, Z)$ be a smooth log-manifold. Then $TZ\Rightarrow Z$ is a subalgebroid of $TM$, and the rescaling $[TM;TZ]$ is exactly the log-tangent bundle $T_ZM$ from Example \ref{ex:logsymplectic}.
\end{example}
\begin{example}[Zero-rescalings]\label{ex:0rescaling}
	The zero algebroid $0_Z\Rightarrow Z$ is a subalgebroid of any Lie algebroid $\AA\Rightarrow M$. Then $[\AA; 0_Z]$ is called the \textit{zero-rescaling} of $\AA$ over $Z$. In the case that $\AA=TM$, the resulting algebroid is called the \textit{0-tangent bundle} $^0T_ZM$. It's sections correspond to vector fields on $M$ that vanish over $Z$.
\end{example}
	
\begin{example}[Scattering manifolds]\label{ex:scatteringsymplectic}
	More interesting is the zero-rescaling of $T_ZM$ over $Z$. The resulting Lie algebroid ${}^{sc}T_ZM=[T_ZM;0_Z]$ is the \textit{scattering-tangent} bundle. A Poisson structure $\pi$ in $M$ is scattering-symplectic when it lifts to a symplectic structure $\omega=\pi^{-1}$ on ${}^{sc}T_ZM$. We refer to $(M, Z, \omega)$ as a scattering-symplectic manifold.
	
	Scattering-symplectic structures are ubiquitous. For example, every even dimensional sphere admists a scattering-symplectic structure, whose hypersurface is the equator \cite{lanius2020}.
\end{example}
The following proposition is a consequence of the Serre-Swan theorem. 
\begin{proposition}\label{prop:rescalingmorphisms}
	Let $\AA\Rightarrow M$ be a Lie algebroid and $\BB\Rightarrow Z$ a subalgebroid over a hypersurface $Z\subset M$. Let $\varphi:\AA\to \AA$ be a bundle map that restricts to a bundle map $\varphi\vert_\BB=\psi:\BB\to \BB$. Then $\varphi$ lifts to a (unique) map $[\varphi; \psi]:[\AA; \BB]\to [\AA;\BB]$. If $\varphi$ and $\psi$ are Lie algebroid morphisms, then so is $[\varphi;\psi]$. 
\end{proposition}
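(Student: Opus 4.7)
The plan is to use the Serre-Swan identification $\Gamma([\AA;\BB])=\Gamma(\AA,\BB)$ to construct $[\varphi;\psi]$ at the level of sections. Let $\bar\varphi:M\to M$ denote the base map of $\varphi$; the hypothesis $\varphi|_\BB=\psi$ forces $\bar\varphi(Z)\subseteq Z$, with $\bar\varphi|_Z$ equal to the base map of $\psi$. For any $v\in\Gamma(\AA)$, the composition $\varphi\circ v$ defines a section of the pullback bundle $\bar\varphi^*\AA$, giving a $C^\infty(M)$-linear map $\Gamma(\AA)\to\Gamma(\bar\varphi^*\AA)$.

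The key step is to verify that this map sends $\Gamma(\AA,\BB)$ into the submodule corresponding to sections of $\bar\varphi^*[\AA;\BB]$. For $v\in\Gamma(\AA,\BB)$ and $z\in Z$, one has $v(z)\in\BB_z$, and since $\varphi|_\BB=\psi$ maps $\BB$ into $\BB$, it follows that $(\varphi\circ v)(z)=\psi(v(z))\in\BB_{\bar\varphi(z)}$; that is, $\varphi\circ v$ lands in $\BB$ along $Z$, which is exactly the defining condition for sections of $\bar\varphi^*[\AA;\BB]$. By Serre-Swan, the resulting module map corresponds to a bundle map $[\varphi;\psi]:[\AA;\BB]\to[\AA;\BB]$ over $\bar\varphi$. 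Uniqueness is immediate: on the dense open set $M\setminus Z$ the bundles $[\AA;\BB]$ and $\AA$ are canonically identified, so any lift of $\varphi$ must agree with $\varphi$ there and hence everywhere by continuity.

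For the Lie algebroid part, the anchor of $[\AA;\BB]$ factors through the natural map $[\AA;\BB]\to\AA$, and the bracket on $\Gamma([\AA;\BB])=\Gamma(\AA,\BB)$ is the restriction of the bracket on $\Gamma(\AA)$. Hence the anchor- and bracket-intertwining properties of $\varphi$ as a Lie algebroid morphism restrict directly to the analogous properties of $[\varphi;\psi]$ on $\Gamma(\AA,\BB)$. The only non-formal ingredient, and thus the main obstacle, is the identification of sections of the pullback $\bar\varphi^*[\AA;\BB]$ with sections of $\bar\varphi^*\AA$ that land in $\BB$ over $Z$; this can be verified locally using a frame $e_1,\dots,e_k$ of $\BB$ extended to a frame of $\AA$ together with a local defining function for $Z$, and is routine.
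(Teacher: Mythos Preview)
Your approach---working at the level of modules of sections via the Serre-Swan identification $\Gamma([\AA;\BB])=\Gamma(\AA,\BB)$---is exactly what the paper intends: its entire proof is the single sentence ``The following proposition is a consequence of the Serre-Swan theorem.'' You have supplied the details it omits.

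One step deserves more care than you give it. The identification you call routine---that $\Gamma(\bar\varphi^*[\AA;\BB])$ consists of the sections of $\bar\varphi^*\AA$ landing in $\BB$ along $Z$---is not correct without an additional hypothesis on the base map. In your local frame $e_1,\dots,e_k,xe_{k+1},\dots,xe_n$ for $[\AA;\BB]$, a section of $\bar\varphi^*\AA$ with components $w_i$ lifts to $\bar\varphi^*[\AA;\BB]$ precisely when $w_{k+1},\dots,w_n$ are divisible by $\bar\varphi^*x$; your check that $\varphi\circ v$ lands in $\BB$ along $Z$ only gives divisibility by $x$. These coincide when $\bar\varphi^*x$ and $x$ generate the same ideal, i.e.\ when $\bar\varphi^{-1}(Z)=Z$ as a divisor, but the stated hypothesis $\varphi|_\BB=\psi:\BB\to\BB$ only forces $\bar\varphi(Z)\subseteq Z$. (A concrete failure: take $\AA=T\RR$, $Z=\{0,1\}$, $\BB=0_Z$, and $\bar\varphi(x)=x^2$; then $\varphi=T\bar\varphi$ restricts to $\BB$ but does not lift to ${}^0T_Z\RR$ at $x=-1$.) In all of the paper's applications---$\bar\varphi=m_t$ on a vector bundle, or a diffeomorphism with $\bar\varphi(Z)=Z$---the needed condition holds, so the argument is fine in context; but your write-up should either add this hypothesis or note that it is implicitly assumed.
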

As a consequence, every morphism of the log-tangent bundle $T_Zf:T_ZM\to T_ZM$ induces a morphisms of the scattering-tangent bundle ${}^{sc}T_Z f:{}^{sc}T_ZM\to {}^{sc}T_ZM$.

\section{Coisotropic bisections}
In this section, we establish general properties of coisotropic bisections of Poisson groupoids and their connection to Poisson diffeomorphisms. After that, we show that they form a Lie group whenever the Poisson groupoid is linearizable. 

\subsection{Coisotropic bisections of a Poisson groupoid}
Let $\GG\toto M$ be a groupoid. A bisection $\sigma\in \Bis(\GG)$ is a section of the source map $s:\GG\to M$ with the property that $l_\sigma:=t\circ \sigma:M\to M$ is a diffeomorphism. Any bisection corresponds to and is determined by a left translation map 
\[
L_\sigma:\GG\to \GG, \quad g\mapsto m(\sigma(t(g)), g). 
\]
If $X\in \Gamma(\AA)$ is a section of the Lie algebroid of $\AA$, then the flow $\varphi_t$ of its right-invariant vector field $\rar{X}$ gives rise to a family of bisections $\sigma_t=\varphi_t\vert_M$, and in fact, it holds that $\varphi_t=L_{\sigma_t}$.

Bisections of a Lie groupoid form a group, and can be multiplied: if $\sigma, \tau\in \Bis(\GG)$, we define $\sigma \cdot \tau$ as
\[
\sigma\cdot \tau(x)=m( \sigma( t\circ \tau(x)), \tau(x)), \quad \mbox{ for $x\in M$,}
\] 
or, equivalently, as $L_{\sigma\cdot \tau}=L_{\sigma} \circ L_{\tau}$. Several authors already described $\Bis(\GG)$ as a Lie group, e.g. in \cite{Rybicki2002, SchmedingWockel2015}.

A Poisson groupoid $(\GG, \Pi)\toto (M, \pi)$ is a Lie groupoid $\GG\toto M$ with a Poisson structure $\Pi$ on the space of arrows that is multiplicative: the graph of the multiplication map is a coisotropic submanifold of $(\GG\times \GG\times \GG, (-\Pi)\times \Pi\times \Pi)$ \cite{weinstein1988}. For each multiplicative Poisson structure $\Pi$ on a groupoid $\GG\toto M$, there is a unique Poisson structure $\pi$ on $M$ for which the target map $t:(\GG, \Pi)\toto (M, \pi)$ is a Poisson map. This allows us to talk about Poisson groupoids \textit{over} a fixed Poisson manifold $(M, \pi)$. 
\begin{definition}
	Let $(\GG, \Pi)\toto (M, \pi)$ be a Poisson groupoid. A bisection $\sigma\in \Bis(\GG)$ is \textit{coisotropic} when $\operatorname{im}\sigma$ is a coisotropic submanifold of $(\GG, \Pi)$. The collection of coisotropic bisections is denoted by $\Bis(\GG, \Pi)$. 
\end{definition}
We establish some properties of coisotropic bisections. 
\begin{proposition}\label{prop:coisotropicbisections}
	Let $(\GG, \Pi)\toto (M, \pi)$ be a Poisson groupoid.
	\begin{itemize}[noitemsep, topsep=0em]
		\item[(i).] The coisotropic bisections form a subgroup $\Bis(\GG, \Pi)$ of $\Bis(\GG)$. 
		\item[(ii).] A bisection $\sigma\in \Bis(\GG)$ is coisotropic if and only if $L_\sigma$ is a Poisson map.
		\item[(iii).] For any $\sigma\in \Bis(\GG, \Pi)$, the induced diffeomorphism on the base $l_\sigma:M\to M$ is a Poisson map.
		\item[(iv).] The image of a coisotropic bisection is a Lagrangian submanifold.
	\end{itemize} 	
\end{proposition}
\begin{remark}
	Although every coisotropic bisection is Lagrangian for any Poisson groupoid, we reserve the term "Lagrangian bisection" (a bisection with Lagrangian image) for symplectic groupoids only. 
\end{remark}
\begin{proof}
	The first assertion follows from the second, while \textit{(ii)} and \textit{(iii)} can be proved by means of the coisotropic calculus developed in \cite{weinstein1988}. Let us briefly sketch how. The central idea in \cite{weinstein1988} is to relax the notion of a Poisson map from $(M, \pi_M)$ to $(N, \pi_N)$ to that of a \textit{Poisson relation} $r:(M, \pi_M)\dashrightarrow (N, \pi_N)$, which is a coisotropic submanifold of $(N\times M, \pi_N\times (-\pi_M))$. By Proposition \ref{prop:poissonmapcoisotropic}, every Poisson map defines a Poisson relation via its graph. Weinstein proved in \cite{weinstein1988} that, under a "very clean" condition (loc. cit., Definition 1.3.7), Poisson relations compose to Poisson relations (loc. cit., Theorem 1.3.9).  
	
	To prove \textit{(ii)} and \textit{(iii)}, we only have to write the maps as a composition of coisotropic relations, and check that they satisfy the "very clean" condition. For instance, if $\sigma$ is a coisotropic bisection, then its image can be regarded as a relation $\sigma:\{*\}\dashrightarrow (\GG, \Pi)$. We can write $L_\sigma$ as a composition of the Poisson relations
	\[
	\sigma\times \id =\{ (k, h, g)\in \GG^3 : g=h, k\in \Im \sigma\}:\GG\dashrightarrow \GG^2
	\]
	and the multiplication map $m:\GG^2\dashrightarrow \GG$. It is slightly tedious but straightforward to show that this composition is "very clean", so that it composes to a Poisson relation, which is exactly the map $L_\sigma$. Conversely, if $L_\sigma$ is a Poisson map, then $\Im \sigma= L_\sigma(M)$ is coisotropic because $M$ is. Statement \textit{(iii)} can be proved in a similar fashion, using that the target map $t:(\GG, \Pi)\to (M, \pi)$ is Poisson.
	
	As for \textit{(iv)}, we only have to show that the unit section $M\subset \GG$ is a Lagrangian, because of \textit{(ii)}. To do so, we apply Lemma \ref{lem:lagrangiancharaterization}. For $\alpha_1, \alpha_2\in (\Pi^\sharp)^{-1}(TM)$ we can write\[
	\alpha_1=\beta_1+s^*\gamma_1, \quad \alpha_2=\beta_2+t^*\gamma_2,
	\]
	for $\beta_1, \beta_2\in (TM)^\circ$ and $\gamma_1, \gamma_2\in T^*M$. It follows that
	\[
	\Pi(\alpha_1, \alpha_2)=\Pi(s^*\gamma_1, t^*\gamma_2)+\Pi(\alpha_1, \beta_2)+\Pi(\beta_1, \alpha_2)-\Pi(\beta_1, \beta_2). 
	\]
	The first term vanishes because $(\ker Ts)^\circ$ and $(\ker Tt)^\circ$ are Poisson orthogonal for Poisson groupoids (\cite{weinstein1988}, Proposition 4.2.6), the second and third therm are zero because $\beta_i\in (TM)^\circ$ and $\Pi^\sharp(\alpha_j)\in TM$, and the last term vanishes because $M$ is coisotropic in $(\GG, \Pi)$. Thus, $M$ is Lagrangian.
\end{proof}

\begin{proposition}[Curves of coisotropic bisections]\label{prop:coisotropicbisectionsliealgebra}
	Let $(\GG, \Pi)\toto M$ be a Poisson groupoid with Lie bialgebroid $(\AA, \AA^*)$. The following Lie algebras are isomorphic.
	\begin{itemize}[noitemsep, topsep=0em]
		\item[(i).] The Lie algebra $\mf{X}^R(\GG, \Pi)$ of right-invariant Poisson vector fields, equipped with the Lie bracket.
		\item[(ii).] The Lie algebra $\Gamma(\AA, d_{\AA^*})$ of $\AA^*$-closed sections of $\AA$, equipped with the bracket on $\Gamma(\AA)$.  
	\end{itemize} 
	The isomorphism is given by restricting $\rar{v}\in \mf{X}^R(\GG, \Pi)$ along $M$. 
	Moreover, if an isotopy $\sigma_t$ of bisections on $\GG$ is generated by $v_t\in \Gamma(\AA)$, then $\sigma_t\in \Bis(\GG, \Pi)$ for all $t$ if and only if $v_t\in \Gamma(\AA, d_{\AA^*})$. 
\end{proposition}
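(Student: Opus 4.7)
The plan is to leverage the standard identification of right-invariant vector fields on $\GG$ with sections of $\AA$ via restriction along $M$, and then isolate the Poisson vector fields on the right-invariant side. Recall that $v \mapsto \rar{v}$ gives an isomorphism of Lie algebras $\Gamma(\AA) \to \mf{X}^R(\GG)$ (with the Lie algebroid bracket on $\Gamma(\AA)$ and the vector field bracket on $\mf{X}^R(\GG)$), with inverse given by restriction along $M$. So I only need to show that, under this isomorphism, $\mf{X}^R(\GG, \Pi)$ corresponds precisely to $\Gamma(\AA, d_{\AA^*})$.

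The crux is the identity
\[
[\Pi, \rar{v}] = \rar{d_{\AA^*} v}, \qquad v \in \Gamma(\AA),
\]
where the right-hand side is the right-invariant extension of $d_{\AA^*} v \in \Gamma(\wedge^2 \AA)$. This is essentially the content of the Mackenzie--Xu correspondence between Poisson groupoids and Lie bialgebroids: the bracket on $\Gamma(\AA^*)$, and thus $d_{\AA^*}$, was defined out of $\Pi$ precisely so that this holds. I would either cite their result or reprove it by pairing both sides with right-invariant $1$-forms on $\GG$ and using multiplicativity of $\Pi$ to show that $[\Pi, \rar{v}]$ is itself right-invariant (because its flow is by Poisson maps up to translation), so that it is determined by its restriction to $M$. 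Given the identity, $\rar{v}$ is Poisson iff $d_{\AA^*} v = 0$, and the Lie-algebra identification is automatic; note that $\Gamma(\AA, d_{\AA^*})$ is closed under the bracket thanks to the Lie bialgebroid compatibility $d_{\AA^*}[v,w]_\AA = [d_{\AA^*} v, w] + [v, d_{\AA^*} w]$.

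For the moreover part, an isotopy of bisections $\sigma_t$ generated by $v_t \in \Gamma(\AA)$ is by definition such that $L_{\sigma_t}$ is the flow of the time-dependent right-invariant vector field $\rar{v_t}$. By Proposition \ref{prop:coisotropicbisections}(ii), $\sigma_t \in \Bis(\GG, \Pi)$ for all $t$ iff each $L_{\sigma_t}$ is a Poisson map. Applying Lemma \ref{lem:poissonisotopy} to $(\GG, \Pi)$, this is equivalent to $\rar{v_t}$ being a Poisson vector field for every $t$, and by the first part this is exactly the condition $v_t \in \Gamma(\AA, d_{\AA^*})$. The main obstacle is establishing the bracket identity $[\Pi, \rar{v}] = \rar{d_{\AA^*} v}$; once this is in hand the rest is routine.
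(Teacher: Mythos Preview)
Your proposal is correct and follows essentially the same approach as the paper: the paper invokes the identity $\rar{d_{\AA^*} v}=[\Pi,\rar{v}]$ (citing Mackenzie, Corollary 11.4.9) to obtain the isomorphism, and then derives the second statement by combining Proposition~\ref{prop:coisotropicbisections}(ii) with Lemma~\ref{lem:poissonisotopy}, exactly as you outline.
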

\begin{proof}
	The differential $d_{\AA^*}$ is determined by $\rar{d_{\AA^*} v}=[\Pi, \rar{v}]$ for $v\in \Gamma(\AA)$ (\cite{mackenzie2005}, Corollary 11.4.9), which proves that the correspondence is an isomorphism. The second statement now follows directly from combining Proposition \ref{prop:coisotropicbisections}, \textit{(ii)} with Lemma \ref{lem:poissonisotopy}.
\end{proof}

\begin{example}[Linear Poisson structures] It is well-known that there is a duality between linear Poisson structures and Lie algebroids (\cite{mackenzie2005}, Chapter 9). Therefore, we always think of a vector bundle $\AA^*\to M$ with a linear Poisson structure $\pi_{\mathrm{lin}}$ on its total space as the dual of a Lie algebroid $\AA\Rightarrow M$. Since $(\AA^*, \pi_{\mathrm{lin}})$ can be interpreted as a Poisson groupoid, a section $\alpha:M\to \AA^*$ with coisotropic image is always a Lagrangian submanifold by Proposition \ref{prop:coisotropicbisections}. The coisotropic bisections of $(\AA^*, \pi_{\mathrm{lin}})$ form a vector space by Proposition \ref{prop:linearcoisotropic} below, and will play a crucial role in the proof of Theorem \ref{thm:coisotropicbisectionsliegroup}.
\end{example}
\begin{proposition}[\cite{smilde2021linearization}, Corollary 2.10]\label{prop:linearcoisotropic}
	Let $\AA\Rightarrow M$ be a Lie algebroid. The image of a section $\alpha\in \Gamma(\AA^*)$ is coisotropic in $(\AA^*, \pi_{\mathrm{lin}})$ if and only if $d_\AA \alpha=0$, in which case it is a Lagrangian submanifold.
\end{proposition}

\subsection{The Lie group of (coisotropic) bisections}\label{sec:bisectionsliegroup}
The convenient setting of Kriegl and Michor \cite{krieglmichor1997} is a framework for infinite-dimensional manifolds that is based on smooth curves. At it's core lies Boman's theorem, which states that a map $f:M\to N$  between finite dimensional manifolds $M$ and $N$ is smooth if for every smooth curve $c$ in $M$, the curve $f\circ c$ is smooth in $N$. Likewise, smooth maps between convenient manifolds are characterized by the same property, making smoothness easy to check. 

A (convenient) Lie group $G$ is a convenient manifold together with a group structure for which the multiplication $m:G\times G\to G$ and inversion $\iota:G\to G$ are smooth. The tangent space $\mf{g}=T_eG$ at the identity inherits a Lie bracket from $G$. 

In the following, we set $C^\infty((\RR, 0), (G, e))$ to be the space of smooth curves $\gamma:\RR\to G$ with $\gamma(0)=e$. The \textit{(right-)logarithmic derivative} $\delta^r:C^\infty((\RR, 0), (G, e))\to C^\infty(\RR, \mf{g})$ is defined on a smooth curve $\gamma$ by
\[
(\delta^r\gamma)(t)=\frac{d}{d\epsilon}\big\vert_{\epsilon=0} \gamma(t+\epsilon)\gamma(t)^{-1}.
\]
A Lie group is \textit{regular} when the inverse of $\delta^r$,
\[
\operatorname{Evol}^r_G:C^\infty(\RR, \mf{g})\to C^\infty((\RR, 0), (G, e))
\]
exists and is bijective, and the time-1 evolution map
\[
\operatorname{evol}^r_G:C^\infty(\RR, \mf{g})\to G, \quad \operatorname{evol}^r_G(\gamma)=\operatorname{Evol}^r_G(\gamma)(1)
\]
is smooth. When restricted to constant curves in $\mf{g}$, the evolution map coincides with the usual exponential map for Lie groups. All the Lie groups in this paper are regular. In fact, to the author's knowledge, up until now all known convenient Lie groups are regular. 

Let $G$ be a Lie group. An \textit{initial} Lie subgroup is a Lie group $H$ together with an injective smooth group homomorphism $i:H\to G$ which is an initial map, meaning that a curve $\gamma:\RR\to H$ is smooth if and only if $i\circ \gamma:\RR\to G$ is. Note that this implies that the tangent map $T_hi:T_hH\to T_hG$ is injective. We do not consider immersed submanifolds in the context of infinite dimensional manifolds.

An initial Lie subgroup $i:H\to G$ is \textit{embedded} when $H$ is an embedded submanifold of $G$, that is, $i:H\to G$ is a topological embedding and $G$ admits charts $(\mc{V}, \Phi)$ modelled on a convenient vector space $V$ covering $H$ for which $\Phi(H\cap \mc{V})=W\cap \Phi(\mc{V})$, where $W\subset V$ is a closed subspace. 

The following lemma will be of use later. 

\begin{lemma}[\cite{krieglmichor1997}, 38.7]\label{lem:subgroupregular}
	Let $G$ be a regular Lie group and $i:K\hookrightarrow G$ an initial subgroup with the following property: there exists an open neighbourhood $U$ of $e$ in $G$ and a smooth mapping $p:U\to E$ into a convenient vector space $E$ such that $p^{-1}(0)=K\cap U$ and $p$ is constant on the left cosets $Kg\cap U$. Then $K$ is regular. 
\end{lemma}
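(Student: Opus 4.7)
The plan is to verify the two regularity requirements for $K$ by transporting the corresponding data from $G$ through the inclusion $i:K\hookrightarrow G$. Concretely, given $X\in C^\infty(\RR,\mf{k})$, interpret $X$ as a smooth curve in $\mf{g}$ (allowed since $T_ei:\mf{k}\to\mf{g}$ is continuous linear, hence induces a smooth postcomposition map on convenient curve spaces), and set $\gamma:=\operatorname{Evol}^r_G(X):\RR\to G$. The proof then reduces to two points: (a) showing $\gamma(\RR)\subset K$, and (b) showing that the assignment $X\mapsto \gamma$ is smooth as a map into $K$.

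For (a) I first verify that $T_gp\cdot\vec{Y}(g)=0$ for every $Y\in\mf{k}$ and every $g\in U$, where $\vec{Y}$ denotes the right-invariant vector field extending $Y$. This follows by choosing a smooth curve $k:(-\varepsilon,\varepsilon)\to K$ with $k(0)=e$ and $\dot{k}(0)=Y$, noting that $s\mapsto p(k(s)g)$ is constant in $s$ near $0$ by the left-$K$-invariance of $p$ on $U$, and differentiating at $s=0$. Now fix any $t_0\in\RR$ and set $\tilde{\gamma}(t):=\gamma(t)\gamma(t_0)^{-1}$. A direct computation shows $\delta^r\tilde{\gamma}=\delta^r\gamma=X$, and $\tilde{\gamma}(t_0)=e$. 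On a neighbourhood $V$ of $t_0$ where $\tilde{\gamma}(t)\in U$, the chain rule combined with $\dot{\tilde{\gamma}}(t)=\vec{X(t)}(\tilde{\gamma}(t))$ and the vanishing above gives
\[
\frac{d}{dt}\,p(\tilde{\gamma}(t))=T_{\tilde{\gamma}(t)}p\cdot\vec{X(t)}(\tilde{\gamma}(t))=0.
\]
Hence $p\circ\tilde{\gamma}\equiv p(e)=0$ on $V$, so $\tilde{\gamma}(V)\subset p^{-1}(0)\cap U=K\cap U\subset K$. This means $\gamma(t)\in K\iff \gamma(t_0)\in K$ for $t\in V$; therefore $A:=\{t:\gamma(t)\in K\}$ is clopen in $\RR$ and contains $0$, so $A=\RR$.

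For (b), initiality of $K$ in $G$ upgrades the smooth curve $\gamma:\RR\to G$, whose image lies in $K$, to a smooth curve into $K$; this yields $\operatorname{Evol}^r_K(X)=\gamma$, proving bijectivity of the right-logarithmic derivative on $K$. Finally, $\operatorname{evol}^r_K$ equals the composition $\operatorname{evol}^r_G\circ \iota_*$, where $\iota_*:C^\infty(\RR,\mf{k})\to C^\infty(\RR,\mf{g})$ is smooth (postcomposition with a continuous linear map), factored through the inclusion $K\hookrightarrow G$; by initiality this factorisation is smooth into $K$.

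The crux is the locally constant step, which is the only place where the precise form of the hypothesis on $p$ is used: the equation $p^{-1}(0)=K\cap U$ detects membership in $K$, while left-$K$-invariance of $p$ kills the derivative of $p\circ\tilde{\gamma}$. Everything else is a formal consequence of regularity of $G$ and initiality of $i$, with no further analytic input required.
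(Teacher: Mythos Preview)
The paper does not prove this lemma; it is quoted verbatim from \cite{krieglmichor1997}, 38.7, and used as a black box. Your argument is essentially the proof found in Kriegl--Michor: push the curve $X\in C^\infty(\RR,\mf{k})$ into $\mf{g}$, evolve in $G$, use left-$K$-invariance of $p$ to kill the derivative of $p\circ\tilde{\gamma}$ and conclude via a clopen argument that the evolution stays in $K$, then invoke initiality to get smoothness into $K$. This is correct and matches the standard reference.

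Two small remarks on presentation. First, in the convenient setting the relevant property of $T_ei$ is that it is \emph{bounded} (equivalently smooth) rather than ``continuous''; this is automatic since $i$ is smooth, so the content is the same but the terminology in Kriegl--Michor differs. Second, when you pass from initiality for curves to smoothness of the map $\operatorname{evol}^r_K:C^\infty(\RR,\mf{k})\to K$, you are implicitly using that smoothness in the convenient setting is curve-determined: a map into $K$ is smooth iff its composition with every smooth curve is, and initiality of $i$ then transfers this from $G$ to $K$. It would be worth making this one line explicit, since the paper's stated definition of initiality is phrased only for curves.
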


\subsubsection{The Lie group of bisections} Interestingly, the group of bisections of a Lie groupoid can always be given the structure of a Lie group. 
\begin{theorem}[\cite{Rybicki2002}, Theorem 2.2]\label{thm:bisectionsliegroup}
	Let $\GG\toto M$ be a (not necessarily Hausdorff) Lie groupoid with Lie algebroid $\AA\Rightarrow M$ over a Hausdorff manifold $M$. Then $\Bis(\GG)$ is a regular (Hausdorff) Lie group integrating the Lie algebra $\Gamma_c(\AA)$ of compactly supported sections of $\AA$. 
\end{theorem}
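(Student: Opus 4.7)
The plan is to model $\Bis(\GG)$ as an open subset of the convenient manifold $C^\infty_s(M,\GG)$ of smooth sections of the source map, build charts from $\Gamma_c(\AA)$ via a fibrewise exponential construction, verify joint smoothness of the group operations using the exponential law of convenient calculus, and finally establish regularity via right-invariant flows.

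First I would equip $C^\infty_s(M,\GG)$ with a convenient manifold structure. Choose an auxiliary Riemannian metric on $\GG$ (or any spray on the $s$-vertical bundle $T^s\GG=\ker Ts$), and let $\exp^s$ denote the associated fibrewise exponential; for a fixed section $\sigma_0$, restriction of $\exp^s$ to a small neighbourhood of the zero section of $\sigma_0^*T^s\GG$ is a diffeomorphism onto an open neighbourhood of $\sigma_0(M)$ in $\GG$. This yields a chart
\[
\Phi_{\sigma_0}:\mc{U}\subset\Gamma_c(\sigma_0^*T^s\GG)\longrightarrow \mc{V}\subset C^\infty_s(M,\GG),\qquad v\mapsto \exp^s\circ v,
\]
with $\mc{U}$ an open neighbourhood of zero. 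Cartesian closedness of the convenient category makes chart transitions smooth. For $\sigma_0=u$ the unit section, $u^*T^s\GG=\AA$, so the model space at the identity is exactly $\Gamma_c(\AA)$. Next, $\Bis(\GG)$ is the preimage of the open subset $\Diff(M)\subset C^\infty(M,M)$ under the smooth map $\sigma\mapsto t\circ\sigma$, hence is open in $C^\infty_s(M,\GG)$. The hypothesis that $M$ is Hausdorff (while $\GG$ need not be) is used here: every bisection has an image with a Hausdorff open neighbourhood, so the local chart construction via $\exp^s$ is unambiguous.

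The main technical step is smoothness of the group operations. Writing $(\sigma\cdot\tau)(x)=m(\sigma(l_\tau(x)),\tau(x))$ with $l_\tau=t\circ\tau$, multiplication decomposes into (i) precomposition $(\sigma,\tau)\mapsto \sigma\circ l_\tau$, which is jointly smooth because $l_\tau$ depends smoothly on $\tau$ and is a diffeomorphism of $M$ (a classical result in the convenient setting, see \cite{krieglmichor1997}, 43.1), and (ii) fibred composition with the smooth groupoid multiplication $m$, smooth by the exponential law. Inversion is handled analogously using the groupoid inverse and the smoothness of $\tau\mapsto l_\tau^{-1}$ in $\Diff(M)$. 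The tangent space at the identity is identified with $\Gamma_c(\AA)$ via the chart $\Phi_u$, and the induced Lie bracket on $\Gamma_c(\AA)$ agrees with the algebroid bracket by computing the differential of the adjoint action and using that $[\rar{v},\rar{w}]=\rar{[v,w]_\AA}$ on right-invariant vector fields.

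For regularity, given a smooth curve $v\in C^\infty(\RR,\Gamma_c(\AA))$, form the time-dependent right-invariant vector field $\rar{v_t}$ on $\GG$. Since each $v_t$ has compact support in $M$, $\rar{v_t}$ is complete and its time-$1$ flow $\varphi$ restricts to a bisection $\sigma:=\varphi\vert_M\in\Bis(\GG)$; more generally, $\sigma_t:=\varphi_t\vert_M$ traces out a smooth curve in $\Bis(\GG)$ with right-logarithmic derivative $v$. Smooth dependence of this construction on $v$ is the standard parametric smoothness of ODE flows in the convenient setting, providing the evolution map $\operatorname{evol}^r_{\Bis(\GG)}$ and proving regularity. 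The main obstacle in the whole argument is the joint smoothness of composition with varying diffeomorphisms in the multiplication law; everything else reduces to careful bookkeeping with the chart $\Phi_u$ and the exponential law.
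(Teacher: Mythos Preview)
The paper does not actually prove this theorem; it is quoted from \cite{Rybicki2002}, and the paper only records, immediately afterwards, the explicit description of the charts it will need later. Your outline is a correct sketch of the standard proof in the convenient setting, and in spirit it matches what is behind the cited result.

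There is, however, one difference worth flagging between your chart construction and the one the paper records. You build charts via a fibrewise exponential along $T^s\GG=\ker Ts$, so that $v\mapsto \exp^s\circ v$ lands automatically in sections of $s$. The paper instead allows an \emph{arbitrary} tubular neighbourhood $\varphi:\AA_\sigma\supset U\to V\subset\GG$ of $\operatorname{im}\sigma$ and compensates with an explicit reparametrisation,
\[
\Phi(\tau)=\varphi^{-1}\circ\tau\circ\bigl(p_{\AA_\sigma}\circ\varphi^{-1}\circ\tau\bigr)^{-1},
\]
to force the result back to a section of $\AA_\sigma$. Both produce the same smooth structure, but the paper's formulation is deliberately more flexible: the point of the subsequent Theorem~\ref{thm:coisotropicbisectionsliegroup} is to choose $\varphi$ to be a \emph{Poisson} linearisation of $\Pi$ around the unit section, so that the chart carries coisotropic bisections to $d_{\AA^*}$-closed sections. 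Your $\exp^s$ coming from a generic spray has no reason to be Poisson, so if you were to continue to the coisotropic case you would either have to redo the chart construction in the paper's form or argue separately that a Poisson tubular neighbourhood can be realised as such an exponential. For the bare statement of the present theorem your version is fine and slightly cleaner; for the paper's purposes the reparametrised version is the one that is actually used.
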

\begin{remark}
A more recent paper on the group of bisections of a Lie groupoid is the work by Schmeding and Wockel \cite{SchmedingWockel2015}. While the current paper is set up in in the convenient setting, they are using the so-called Bastiani calculus to describe their infinite-dimensional manifolds. An fairly recent overview of Lie theory in infinite dimensions, using the Bastiani calculus, can be found in \cite{neeb2015}. Although the Bastiani calculus seems to be more popular in the literature (to the author's awareness), we chose to work in the convenient setting for two reasons: first, it requires a less technical setup, and second, the paper by Rybicki \cite{Rybicki2002}, using the convenient setting, proves Theorem \ref{thm:bisectionsliegroup} also for non-Hausdorff Lie groupoids.

For the spaces considered in this paper, the two settings aren't too far off: they even agree up to the level of Fr\'echet spaces. Thus, one can expect the results in this paper to be valid in the MB-setting. 
\end{remark}
To proceed to the group of coisotropic bisections, a description of the smooth structure on $\Bis(\GG)$ is necessary. In this paper, the arrow space of a groupoid is assumed to be finite-dimensional.

\subsubsection*{Charts} A chart of $\Bis(\GG)$ around a bisection $\sigma$ can be obtained from any tubular neighbourhood as follows. Identify $N\operatorname{im}\sigma \cong \sigma^*\ker Ts=:\AA_\sigma$. Let $\varphi:\AA_\sigma \supset U \to V\subset \GG$ be a tubular neighbourhood around $\operatorname{im} \sigma$. Define the $C^1$-opens
\begin{align*}
	\mc{U}&=\left\{ v\in \Gamma_c(\AA_\sigma): \substack{ \mbox{ $\operatorname{im} v \subset U$} \\ 
		\mbox{ $s\circ \varphi \circ v$ and $t\circ \varphi\circ v$ are diffeo's}} \right\},\\
	\mc{V}&=\left\{ \tau \in \Bis_c(\GG): \substack{ \mbox{$\operatorname{im}\tau \subset V$} \\
		\mbox{ $p_{\AA_\sigma}\circ \varphi^{-1}\circ \tau$ is a diffeo} }\right\}, 
\end{align*}
and the map $\Phi:\VV\to \mc{U}$ by
\[
\Phi(\tau)=\varphi^{-1}\circ \tau \circ \left( p_{\AA_\sigma}\circ \varphi^{-1}\circ \tau\right)^{-1}\in \mc{U}.
\]
Then $(\mc{V}, \Phi)$ is a chart modelled on $\Gamma_c(\AA_\sigma)$ around the identity. The crux is that any tubular neighbourhood around $\operatorname{im} \sigma$ defines a chart of $\Bis(\GG)$ around $\sigma$ this way. 

\subsubsection*{Topological and smooth structure}\label{subsubsec:topologicalandsmoothstructure} Let $E\to M$ be a vector bundle. The space of smooth sections $\Gamma(E)$, together with the topology of uniform convergence on compact subsets of each derivative separately, is a topological vector space (which is Fréchet when $M$ is second countable) (\cite{krieglmichor1997}, 30.3). A curve $\gamma:\RR\to \Gamma(E)$ is smooth if and only if the associated map $\gamma^\wedge:\RR\times M\to E$ is smooth (\cite{krieglmichor1997}, 27.17). 

Consider a compact set $K\subset M$, and denote by $\Gamma_K(E)$ the space of sections of $E$ with support in $K$. This is a closed subspace of $\Gamma(E)$. The space of compactly supported sections $\Gamma_c(E)$ is endowed with the direct limit locally convex topology inherited from $\Gamma_K(E)$, ranging over compact subsets $K$ of $M$ (\cite{krieglmichor1997}, 30.4).

A curve $\gamma:\RR\to\Gamma_c(E)$ is smooth if and only if the associated map $\gamma^\wedge:\RR\times M\to E$ is smooth and is \textit{timely proper} in the following sense: for every bounded interval $[a,b]\subset \RR$ there exists a compact $K\subset M$ for which $\gamma(t)$ is constant on $M\setminus K$ for all $t\in [a,b]$ (\cite{krieglmichor1997}, 42.5).

As the group of bisections of a Lie groupoid $\GG\toto M$ is modelled on $\Gamma_c(\AA)$, we have the following characterization of smooth curves in $\Bis(\GG)$.
\begin{lemma}[\cite{Rybicki2001}, Lemma 3.3]
	A curve $\gamma:\RR\to \Bis(\GG)$ is smooth if and only its associated map $\gamma^\wedge:\RR\times M \to \GG$ is smooth and $\gamma$ is timely proper: for every bounded interval $[a,b]\subset \RR$ there exists a compact $K\subset M$ such that $\gamma(t)$ is constant on $M\setminus K$ for all $t\in [a,b]$. 
\end{lemma}

\begin{remark}
	Convenient manifolds are assumed to be \textit{smoothly Hausdorff}, which means that the space of smooth functions separates points in the manifold. The group of bisections of a Lie groupoid is smoothly Hausdorff, because it is topologically Hausdorff and is modeled on a convenient vector space that admits partitions of unity (\cite{krieglmichor1997}, 16.10).  
\end{remark}

\subsubsection{The Lie group of coisotropic bisections}
Under the assumption of linearizability of a Poisson groupoid $(\GG, \Pi)$ around $M$, the coisotropic bisections form a Lie group.
\begin{theorem}\label{thm:coisotropicbisectionsliegroup}
	Let $(\GG, \Pi)\toto (M, \pi)$ be a Poisson groupoid with bialgebroid $(\AA, \AA^*)$. Assume that $(\GG, \Pi)$ is linearizable around $M$. Then $\Bis(\GG, \Pi)$ is a regular embedded Lie subgroup of $\Bis(\GG)$, whose Lie algebra corresponds to $\Gamma_c(\AA, d_{\AA^*})=\{ v\in \Gamma_c(\AA): d_{\AA^*} v=0\}$, the space of closed compactly supported one-forms of $\AA^*$. 
\end{theorem}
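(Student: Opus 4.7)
My approach is to build a convenient chart of $\Bis(\GG)$ around the unit bisection $u$ in which the coisotropic bisections form a closed linear subspace, and then use left translations by elements of $\Bis(\GG,\Pi)$ (which preserve coisotropy by Proposition \ref{prop:coisotropicbisections}) to propagate this chart across $\Bis(\GG,\Pi)$. The linearizability hypothesis provides a local Poisson diffeomorphism $\psi\colon (NM,\Pi_{\mathrm{lin}})\supset V\to U\subset(\GG,\Pi)$ restricting to the identity on $M$. Under the canonical identification $NM\cong\AA=\ker Ts|_M$ coming from the decomposition $T\GG|_M=TM\oplus\ker Ts|_M$, the linear Poisson structure $\Pi_{\mathrm{lin}}$ is the one dual to the Lie algebroid $\AA^*$, a standard consequence of the Poisson groupoid/Lie bialgebroid correspondence used in \cite{smilde2021linearization}. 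Using $\psi$ as the tubular neighbourhood, the chart construction recalled in Section \ref{sec:bisectionsliegroup} produces a chart $(\mc{V},\Phi)$ of $\Bis(\GG)$ around $u$, modelled on an open subset $\mc{U}\subset\Gamma_c(\AA)$.

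The key point is that for $\tau\in\mc{V}$, the image $\operatorname{im}\Phi(\tau)\subset\AA$ differs from $\psi^{-1}(\operatorname{im}\tau)$ only by a reparametrization of the base, so the two submanifolds agree as subsets of $\AA$. Because $\psi$ is a Poisson diffeomorphism, $\operatorname{im}\tau$ is coisotropic in $(\GG,\Pi)$ if and only if $\operatorname{im}\Phi(\tau)$ is coisotropic in $(\AA,\Pi_{\mathrm{lin}})$. Applying Proposition \ref{prop:linearcoisotropic} to the Lie algebroid $\AA^*$ (whose dual is $\AA$), this is equivalent to $d_{\AA^*}\Phi(\tau)=0$. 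Thus $\Phi$ maps $\mc{V}\cap\Bis(\GG,\Pi)$ bijectively onto $\mc{U}\cap\Gamma_c(\AA,d_{\AA^*})$, and the latter is the kernel of a continuous differential operator, hence a closed linear subspace of $\Gamma_c(\AA)$. Translating $(\mc{V},\Phi)$ by any $\sigma\in\Bis(\GG,\Pi)$ gives compatible charts around every coisotropic bisection, which together exhibit $\Bis(\GG,\Pi)$ as an embedded Lie subgroup of $\Bis(\GG)$; smoothness of multiplication and inversion is inherited from $\Bis(\GG)$ (Theorem \ref{thm:bisectionsliegroup}).

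For the Lie algebra, the chart immediately identifies $T_u\Bis(\GG,\Pi)$ with $\Gamma_c(\AA,d_{\AA^*})$; this is a Lie subalgebra of $\Gamma_c(\AA)$ because $d_{\AA^*}$ is a derivation of $[\cdot,\cdot]_\AA$, which is precisely the Lie bialgebroid compatibility axiom, and Proposition \ref{prop:coisotropicbisectionsliealgebra} confirms the identification independently. Regularity follows directly: given a smooth curve $v\colon\RR\to\Gamma_c(\AA,d_{\AA^*})$, the evolution $\sigma_t\in\Bis(\GG)$ exists by regularity of $\Bis(\GG)$, and lies in $\Bis(\GG,\Pi)$ by Proposition \ref{prop:coisotropicbisectionsliealgebra} since each $v_t$ is $d_{\AA^*}$-closed; smoothness of the resulting evolution map into $\Bis(\GG,\Pi)$ then follows from the embedded (in particular initial) structure. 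Alternatively, Lemma \ref{lem:subgroupregular} applies with $p=d_{\AA^*}\circ\Phi$.

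The main obstacle I anticipate is the careful verification of two interlocking naturality statements: first, that the bisection chart construction, which involves a base-point reparametrization, actually produces the image submanifold $\psi^{-1}(\operatorname{im}\tau)$ in $\AA$ (so that Proposition \ref{prop:linearcoisotropic} can be applied directly), and second, that the linearized Poisson groupoid $(NM,\Pi_{\mathrm{lin}})$ really is the dual of the Lie algebroid $\AA^*$ with its canonical linear Poisson structure, so that the roles of $\AA$ and $\AA^*$ in Proposition \ref{prop:linearcoisotropic} match those of the ambient Lie bialgebroid. Once these identifications are in hand, the rest of the argument is essentially bookkeeping on top of the existing Lie group structure on $\Bis(\GG)$.
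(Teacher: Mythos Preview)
Your proposal is correct and follows essentially the same route as the paper: use the linearization as the tubular neighbourhood defining an adapted chart, invoke Proposition~\ref{prop:linearcoisotropic} to identify $\Bis(\GG,\Pi)$ locally with $\Gamma_c(\AA,d_{\AA^*})$, and propagate via left translation by coisotropic bisections (the paper phrases this as ``$L_\sigma\circ\varphi$ linearizes $\Pi$ around $\operatorname{im}\sigma$''). For regularity the paper applies Lemma~\ref{lem:subgroupregular} with the globally defined map $p(\sigma)=L_\sigma^*(\Pi)$, which is manifestly constant on left $\Bis(\GG,\Pi)$-cosets by Proposition~\ref{prop:coisotropicbisections}\,(ii); your suggested alternative $p=d_{\AA^*}\circ\Phi$ is not obviously coset-constant, but your primary regularity argument (evolve in $\Bis(\GG)$, land in $\Bis(\GG,\Pi)$ by Proposition~\ref{prop:coisotropicbisectionsliealgebra}, and use initiality) is sound.
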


\begin{proof}
	We will construct charts of $\Bis(\GG)$ adapted to $\Bis(\GG, \Pi)$. Choose a linearization $\varphi:(\AA, \pi_{\mathrm{lin}})\supset U\to V\subset (\GG, \Pi)$ around $M$. Let $\sigma\in \Bis(\GG, \Pi)$. As $L_\sigma$ is a Poisson map, $L_\sigma\circ \varphi$ linearizes $\Pi$ around $\operatorname{im}\sigma$. Additionally, let $(\mc{V}, \Phi)$ be the chart obtained from $\varphi$ as described above. Then, according to Proposition \ref{prop:linearcoisotropic}, $\tau\in \mc{V}$ is coisotropic if and only if $\Phi(\tau)$ is coisotropic in $\AA$, which happens if and only if $d_{\AA^*}\Phi(\tau)=0$. It follows that $\Phi(\Bis(\GG, \Pi)\cap \mc{V})=\Gamma_c(\AA, d_{\AA^*})\cap \Phi(\mc{V})$. Hence, $\Bis(\GG, \Pi)$ is an embedded submanifold of $\Bis(\GG)$, modelled on $\Gamma_c(\AA, d_{\AA^*})$, that is closed with respect to the compact-open $C^1$-topology (\cite{krieglmichor1997}, 41.9).  This shows that $\Bis(\GG, \Pi)$ is an embedded Lie subgroup. 
	
	By Proposition \ref{prop:coisotropicbisectionsliealgebra}, the Lie algebra of $\Bis(\GG, \Pi)$ with this Lie group structure is exactly $\Gamma_c(\AA, d_{\AA^*})$.
	
For regularity, we use Lemma \ref{lem:subgroupregular}. Consider the map $p:\Bis(\GG)\to \mf{X}^2(\GG)$ given by $p(\sigma)=(L_\sigma)^*(\Pi)$. This map is smooth: if $\sigma_t$ is a smooth curve in $\Bis(\GG)$, then the map $\RR\times \GG \ni (t, g)\mapsto (L_{\sigma_t})^*(\Pi)_g \in \wedge^2 T\GG$ is smooth, so by the discussion in section \ref{subsubsec:topologicalandsmoothstructure}, $L_{\sigma_t}(\Pi)$ is a smooth curve in $\mf{X}^2(\GG)$. From Proposition \ref{prop:coisotropicbisections} \textit{(ii)} it follows that $p^{-1}(0)=\Bis(\GG, \Pi)$. Also, $p$ is constant on the left cosets of $\Bis(\GG, \Pi)$. Hence $\Bis(\GG, \Pi)$ is regular by Lemma \ref{lem:subgroupregular}.
\end{proof}

\subsection{Lagrangian bisections of symplectic groupoids}\label{sec:lagrangianbisections}
When $(\GG, \Omega)\toto(M, \pi)$ is a symplectic groupoid, the Poisson structure is always linearizable around $M$ by Weinstein's Lagrangian neighbourhood theorem. As a special case of Theorem \ref{thm:coisotropicbisectionsliegroup}, we recover the result by Rybicki \cite{Rybicki2001}: the group of Lagrangian bisections $\Bis(\GG, \Omega)$ of a symplectic groupoid is a regular Lie group, whose Lie algebra is given by closed, compactly supported one-forms. Already this group has many interesting connections with the Poisson diffeomorphism group on the base. 

Analogous to Hamiltonian diffeomorphisms on symplectic manifolds, one has the notion of (compactly supported) \textit{exact} Lagrangian bisections $\Bis_{\mathrm{ex},c}(\GG, \Omega)$. These are the time-one flows of (compactly supported) exact time-dependent one-forms $\Omega^1_{\mathrm{ex},c}(M)$. By means of a flux homomorphism for Lagrangian bisections \cite{Rybicki2001, xu1997}, the exact sequence of Lie algebras
\[
0\rightarrow \Omega^1_{\mathrm{ex}, c}(M)\rightarrow \Omega^1_{\mathrm{cl},c}(M) \rightarrow H^1_c(M)\rightarrow 0 
\]
integrates, whenever a certain flux group $\Gamma\subset H^1_c(M)$ is discrete, to an exact sequence of Lie groups
\[
1\rightarrow \Bis_{\mathrm{ex}, c}(\GG, \Omega)\rightarrow \Bis_{c, 0}(\GG, \Omega)\to H^1_c(M)/\Gamma \to 1,
\]
where $\Bis_{c, 0}(\GG, \Omega)$ is the identity component of $\Bis(\GG, \Omega)$, consisting of time-1 flows of (compactly supported, time dependent) closed one-forms. In future work we hope to investigate how the flux homomorphism for Lagrangian bisections as described in \cite{Rybicki2001, xu1997} extends to the setting of this paper.

In light of Proposition \ref{prop:coisotropicbisections}, the Lagrangian bisections act on $(M, \pi)$ by Poisson diffeomorphisms. The map $\pi^\sharp$ induces surjective Lie algebra homomorphisms
\[
\Omega^1_{\mathrm{cl}, c}(M)\to \hamlocc(M, \pi), \quad \Omega^1_{\mathrm{ex}, c}(M)\to \ham_c(M, \pi).
\]
so that it covers the exact sequence
\[
\begin{tikzcd}
	0 \arrow[r] & \Omega^1_{\mathrm{ex}, c}(M) \arrow[r] \arrow[d] & \Omega^1_{\mathrm{cl},c}(M) \arrow[r]\arrow[d] & H^1_c(M) \arrow[r]\arrow[d] & 0\\ 
	0\arrow[r] & \ham_c(M, \pi)\arrow[r] & \hamlocc(M, \pi)\arrow[r] & \pi^\sharp\left(H^1_c(M)\right)\arrow[r] & 0.
\end{tikzcd}
\]
The Lie algebra homomorphisms induced by $\pi^\sharp$ integrate to surjective group homomorphisms
\[
\Bis_{c, 0}(\GG, \Omega)\to \Hamlocc(M, \pi), \quad \Bis_{\mathrm{ex}, c}(\GG, \Omega)\to \Ham_c(M, \pi).
\]
In future work, we hope to investigate the existence of a flux homomorphism on the locally Hamiltonian group for arbitrary Poisson manifolds.

\subsection{Linearization of Poisson groupoids}\label{sec:linearization} Before we proceed to applications of our framework, the main results in \cite{smilde2021linearization} about the linearization of Poisson groupoids are stated for later use. Both theorems linearize integrations of a triangular bialgebroid $(\AA, \AA^*, \pi_\AA)$ (Remark \ref{rk:exactbialgebroids}). The fundamental difference is that the first one is about integrations of $\AA^*$, while the second one concerns integrations of $\AA$. The latter requires the $\AA$-Poisson structure to be of cosymplectic type (Definition \ref{def:cosymplectictype}).
\begin{theorem}[\cite{smilde2021linearization}, Theorem 3.16]\label{thm:dualintegrationlinearizable}
	Let $(\AA, \AA^*, \pi_\AA)$ be a triangular Lie bialgebroid over $M$. Any (local) Poisson groupoid $(\GG^*, \Pi)\toto (M, \pi)$ integrating $(\AA^*, \AA)$ is linearizable around $M$. 
\end{theorem}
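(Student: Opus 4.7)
The plan is to prove a Poisson analogue of Weinstein's Lagrangian neighbourhood theorem via a Moser-type argument, exploiting the specific structure of the Poisson bivector on $\GG^*$ coming from the $r$-matrix $\pi_\AA$. In the symplectic case $\AA = TM$, $\pi_\AA = \pi$, the groupoid $\GG^*$ is the source-simply-connected symplectic groupoid integrating $T^*M$, and the statement reduces to Weinstein's theorem applied to the Lagrangian unit section (which is Lagrangian by Proposition \ref{prop:coisotropicbisections}). The general case should follow the same outline with $\AA$ replacing $TM$.

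First I would identify the normal bundle $NM$ of $M\subset \GG^*$ with $\AA^*$, the Lie algebroid of $\GG^*$, and analyse the linearization $\Pi_{\mathrm{lin}}$ of $\Pi$ along $M$. Choosing any tubular neighbourhood $\psi_0:\AA^*\supset U\to V\subset \GG^*$, the value $\psi_0^*\Pi|_M$ agrees with $\pi$ on $M$, and the first jet of $\psi_0^*\Pi$ along $M$ is (up to a canonical identification) encoded by the bialgebroid data. This identifies $\Pi_{\mathrm{lin}}$ as a Poisson structure on $\AA^*$ built intrinsically from $(\AA, \AA^*, \pi_\AA)$, independent of the choice of $\psi_0$.

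The main step is a Moser argument. Define the interpolating family of bivectors
\[
\Pi_s = s^{-1} m_s^*(\psi_0^*\Pi), \quad s\in (0,1],
\]
which extends smoothly to $\Pi_0 = \Pi_{\mathrm{lin}}$ at $s=0$ and satisfies $[\Pi_s,\Pi_s]=0$ for all $s$. One then looks for a time-dependent vector field $X_s$ on $U$ vanishing to second order along $M$ such that
\[
\tfrac{d}{ds}\Pi_s = \LL_{X_s}\Pi_s = [\Pi_s, X_s];
\]
its time-$1$ flow, defined in a neighbourhood of $M$ because of the second-order vanishing, is then a Poisson diffeomorphism from $(\AA^*,\Pi_{\mathrm{lin}})$ to $(\AA^*, \psi_0^*\Pi)$ restricting to the identity on $M$, which composed with $\psi_0$ yields the required linearization.

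The main obstacle is solving this cohomological equation with the second-order vanishing condition. This is where the triangularity is essential: because $\pi_\AA$ is an $r$-matrix, $\pi_\AA^\sharp:\AA^*\to \AA$ is a Lie algebroid morphism, and the Poisson structure on $\GG^*$ can be described explicitly in terms of right- and left-invariant tensors coming from $\pi_\AA$ via the action of $\GG^*$ on the dual bundle $\AA$. Pulling this "exact" description back through $\psi_0$ exhibits $\tfrac{d}{ds}\Pi_s$ as a Schouten coboundary via an explicit Poincaré-type homotopy operator on the complex associated to $\AA^*$, with the required order-of-vanishing along $M$ automatic from the scaling in the Moser path. A general (non-triangular) Poisson groupoid would not afford such an explicit primitive, which is precisely why the hypothesis restricts to integrations of triangular bialgebroids.
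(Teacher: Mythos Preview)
This theorem is not proved in the present paper; it is merely quoted from the companion article \cite{smilde2021linearization} in Section~\ref{sec:linearization}, so there is no proof here against which to compare your proposal.

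On its own merits, your outline is natural and you correctly identify the nondegenerate case ($\AA=TM$ with $\pi_\AA$ invertible) as Weinstein's Lagrangian neighbourhood theorem. But the decisive step is a genuine gap. Solving $\tfrac{d}{ds}\Pi_s = [\Pi_s, X_s]$ is a Poisson-cohomological equation; in the symplectic Moser one passes to closed $2$-forms and invokes the de~Rham Poincar\'e lemma, but for a possibly degenerate $\Pi_s$ no such mechanism is available for free. You assert that triangularity supplies ``an explicit Poincar\'e-type homotopy operator on the complex associated to $\AA^*$'', yet you neither write the operator down nor specify the complex --- this is the substance of the theorem, not a formality, and the claimed second-order vanishing of $X_s$ along $M$ is likewise not argued. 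Your description of how $\pi_\AA$ enters is also imprecise: since $\GG^*$ integrates $\AA^*$, right- and left-invariant multivectors on $\GG^*$ arise from $\Gamma(\wedge^\bullet\AA^*)$, not from $\Gamma(\wedge^\bullet\AA)$, so $\pi_\AA\in\Gamma(\wedge^2\AA)$ does not directly produce invariant bivectors on $\GG^*$ in the way you suggest. Whatever mechanism ties the $r$-matrix to the multiplicative bivector $\Pi$ on $\GG^*$ is subtler than your sketch indicates, and making it explicit is exactly what a proof must supply.
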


\begin{theorem}[\cite{smilde2021linearization}, Theorem 3.23]\label{thm:cosymplecticpairgroupoidlinearizable}
	Let $(\AA, \AA^*, \pi_\AA)$ be a triangular Lie bialgebroid, and assume that $\pi_\AA$ is of $k$-cosymplectic type. Then for any (local) integration $\GG\toto M$ of $\AA$, the Poisson groupoid $(\GG, \lar{\pi_\AA}-\rar{\pi_\AA})$ is linearizable. 
\end{theorem}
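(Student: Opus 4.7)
The plan is to carry out a Moser-type linearization, using the cosymplectic data on $\AA$ to control the transverse direction while Weinstein's Lagrangian neighbourhood theorem handles the symplectic direction. Write the cosymplectic decomposition $\AA=\FF\oplus\ker\omega$, where $\FF=\bigcap_i\ker\alpha_i$ carries the symplectic algebroid form $\omega\vert_\FF$ inducing $\pi_\AA$. The upshot for the Poisson groupoid $(\GG,\Pi)$ with $\Pi=\lar{\pi_\AA}-\rar{\pi_\AA}$ is that $\Pi^\sharp$ factors through the subbundle $\FF\subset\AA$ on both source and target sides, so $\Pi$ is tangent to the (possibly singular) distribution on $\GG$ integrating the left/right translates of $\FF$.

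First, I would integrate the cosymplectic data to $\GG$. Near $M$, any (local) integration of the subalgebroid $\FF$ produces a subgroupoid $\GG_\FF\subset\GG$ integrating $\FF$, and since $\omega\vert_\FF$ is a closed multiplicative algebroid 2-form, it integrates locally to a multiplicative symplectic form on $\GG_\FF$ making $(\GG_\FF,\tilde\omega_\FF)\toto(M,\pi)$ a local symplectic groupoid. Similarly, the closed $\AA$-one-forms $\alpha_i$ integrate near $M$ to closed multiplicative one-forms $\tilde\alpha_i$ on $\GG$ whose simultaneous kernel is $T\GG_\FF$. Together, $(\tilde\alpha_1,\dots,\tilde\alpha_k,\tilde\omega)$ give $\GG$ the local structure of a cosymplectic groupoid near $M$, with $\GG_\FF$ as its symplectic "leaf."

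Second, I would use this structure to identify a tubular neighbourhood of $M$ in $\GG$ with a neighbourhood of $M$ in $\AA\cong NM$ (via $\ker Ts\vert_M$) that splits, up to a flat correction, as $\GG_\FF\times_M(\ker\omega)$, with $\Pi$ corresponding under this splitting to the symplectic Poisson structure on $\GG_\FF$ plus a zero bivector on the $\ker\omega$ factor. On the $\GG_\FF$-factor, Weinstein's Lagrangian neighbourhood theorem linearizes $\tilde\omega_\FF$ around the Lagrangian unit section, giving the desired linearization of $\Pi\vert_{\GG_\FF}$. On the $\ker\omega$-factor, $\Pi$ already vanishes, and in particular coincides with its linearization.

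The main obstacle is gluing these two pieces into a single Poisson diffeomorphism that fixes $M$ pointwise. For this I would set up a Moser path $\Pi_t = \tfrac{1}{t}m_t^*\Pi$ (with $\Pi_0 = \Pi_{\mathrm{lin}}$) on the tubular neighbourhood, and seek a time-dependent vector field $X_t$ with $X_t\vert_M=0$ solving $\LL_{X_t}\Pi_t=-\dot{\Pi}_t$ of the form $X_t=\Pi_t^\sharp(\beta_t)$ for a one-form $\beta_t$ vanishing on $M$. The primitive $\beta_t$ is produced by a De Rham-type homotopy using the Euler vector field along the fibres, applied in the $\FF$-direction using $\omega\vert_\FF$ and trivially in the $\ker\omega$-direction; multiplicativity of $\tilde\alpha_i$ and $\tilde\omega$ ensures that the construction is compatible with the groupoid structure and that $\beta_t$ lies in the image of $\Pi_t^\sharp$ so that $X_t$ is well-defined. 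Integrating $X_t$ from time $1$ to $0$ then yields the required local Poisson diffeomorphism $(\GG,\Pi)\dashrightarrow(\AA,\Pi_{\mathrm{lin}})$ fixing $M$.
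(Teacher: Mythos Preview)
This theorem is not proved in the present paper; it is quoted from \cite{smilde2021linearization}. The only hints here as to the method are the reference, in the proof of Theorem~\ref{thm:cosymplecticliegroup}, to a ``Moser lemma for cosymplectic Lie algebroids'' (\cite{smilde2021linearization}, Lemma~3.1), together with Lemma~\ref{lem:diagonalcosymplecticstructure}, which computes the pulled-back cosymplectic data $(\tilde\alpha_i,\tilde\omega)$ along the unit section. Taken together these indicate that the cited argument lifts the full cosymplectic package $(\alpha_1,\dots,\alpha_k,\omega)$ to the groupoid, matches it with its linearization along $M$, and then runs a Moser argument \emph{on the cosymplectic structure}, where the flat map $\flat$ is an isomorphism and the Moser vector field is obtained simply by inverting $\flat$. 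The linearization of the cosymplectic data then automatically linearizes the induced Poisson structure.

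Your outline is in the right spirit---integrating the cosymplectic data to $\GG$, isolating the symplectic subgroupoid $\GG_\FF$, invoking Weinstein's Lagrangian neighbourhood theorem there---but the final Moser step has a genuine gap. You run Moser directly on the degenerate bivector $\Pi_t$ and seek $X_t=\Pi_t^\sharp(\beta_t)$ with $\LL_{X_t}\Pi_t=-\dot\Pi_t$. Since $\Pi_t$ has corank $2k$, this equation reads $(\wedge^2\Pi_t^\sharp)(d\beta_t)=\dot\Pi_t$, and solvability requires $\dot\Pi_t$ to lie in the image of $\wedge^2\Pi_t^\sharp$; that is exactly the non-trivial point, and your justification (``$\beta_t$ lies in the image of $\Pi_t^\sharp$'') is not well-formed---$\beta_t$ is a one-form, not a vector field---and in any case does not address it. Nor does multiplicativity of $\tilde\alpha_i$ by itself force $\dot\Pi_t$ into that image. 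The clean fix is precisely the cosymplectic Moser lemma the paper invokes: run Moser on $(\tilde\alpha_i,\tilde\omega)$ rather than on $\Pi$, so that the relevant bundle map is the invertible $\flat$ and the standard symplectic argument goes through verbatim. Once the cosymplectic structure is linearized, $\Pi$ comes along for free, and the separate appeal to Weinstein's theorem on $\GG_\FF$ together with your product splitting becomes unnecessary.
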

In the Appendix \ref{app:scattering} we prove linearization of Poisson groupoids over scattering manifolds, which requires a separate treatment.

\section{Lie groups of Poisson diffeomorphisms}

\subsection{Almost injective Lie algebroids}
Many interesting Lie algebroids are determined by the image of the anchor on sections. 
\begin{definition}
	A Lie algebroid $\AA\Rightarrow M$ is \textit{almost injective} if the anchor is injective on sections. 
\end{definition}
Isomorphisms of almost injective Lie algebroids are uniquely determined by the base map. 
\begin{lemma}
	Let $\AA\Rightarrow M$ be an almost injective Lie algebroid and $f\in \Diff(M)$. Then $Tf$ extends to a (necessarily unique) Lie algebroid isomorphism $(\varphi, f):\AA\to \AA$ if and only if $f_*(\rho(\Gamma_c(\AA)))=\rho(\Gamma_c(\AA))$.  
\end{lemma}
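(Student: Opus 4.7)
The plan is to use almost-injectivity to identify $\Gamma_c(\AA)$ with its image $\rho(\Gamma_c(\AA))\subset \mf{X}_c(M)$, and then read off an algebroid morphism from the pushforward action of $f$ on this submodule of vector fields.

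For the forward direction, if $\varphi:\AA\to \AA$ is a Lie algebroid isomorphism over $f$, then the induced map $\varphi_*:\Gamma_c(\AA)\to\Gamma_c(\AA)$, $v\mapsto \varphi\circ v\circ f^{-1}$, preserves compact support (as $f$ is a diffeomorphism) and satisfies $\rho\circ\varphi_*(v)=Tf\circ \rho(v)\circ f^{-1}=f_*\rho(v)$. Running the same argument with $\varphi^{-1}$ gives $f_*(\rho(\Gamma_c(\AA)))=\rho(\Gamma_c(\AA))$.

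For the converse, assume $f_*(\rho(\Gamma_c(\AA)))=\rho(\Gamma_c(\AA))$. Since $\rho$ is injective on sections, define $\Phi:\Gamma_c(\AA)\to\Gamma_c(\AA)$ by $\Phi(v)=\rho^{-1}(f_*\rho(v))$. This is $\RR$-linear, and for $g\in C^\infty(M)$ the identity $f_*(g\cdot X)=((f^{-1})^*g)\cdot f_*X$ combined with injectivity of $\rho$ gives $\Phi(g v)=((f^{-1})^* g)\cdot \Phi(v)$. I then want to promote $\Phi$ to a vector-bundle map $\varphi:\AA\to\AA$ covering $f$ via the ansatz $\varphi(a):=\Phi(v)(f(x))$ for any $v\in \Gamma_c(\AA)$ with $v(x)=a$. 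Once well-definedness is established, smoothness and fibrewise linearity of $\varphi$ follow automatically from the $\RR$-linearity of $\Phi$ and smoothness of $f$, and the relation $\varphi\circ v\circ f^{-1}=\Phi(v)$ will hold by construction.

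The main obstacle is well-definedness, since almost-injectivity is a statement about sections rather than fibres: a priori the anchor $\rho_x:\AA_x\to T_xM$ may have kernel. The key claim to verify is that if $v\in \Gamma_c(\AA)$ vanishes at $x$, then $\Phi(v)(f(x))=0$. I would prove this with a cutoff plus local-frame argument: pick $\chi\in C^\infty_c(M)$ equal to $1$ near $x$ and supported in a trivialising open set with local frame $\{e_i\}$ of $\AA$. Split $v=\chi v+(1-\chi)v$. For $(1-\chi)v$, $\rho((1-\chi)v)$ vanishes on a neighbourhood of $x$, so $f_*\rho((1-\chi)v)$ vanishes near $f(x)$, and injectivity of $\rho$ on sections (applied via multiplication by a cutoff around $f(x)$) forces $\Phi((1-\chi)v)$ to vanish near $f(x)$. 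For $\chi v=\sum h_i(\chi e_i)$ with $h_i(x)=0$, the twisted $C^\infty$-linearity of $\Phi$ gives $\Phi(\chi v)(f(x))=\sum h_i(x)\Phi(\chi e_i)(f(x))=0$.

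Finally, applying the same construction to $f^{-1}$ yields a two-sided inverse bundle map, so $\varphi$ is a bundle isomorphism, and it is a Lie algebroid morphism since
\[
\rho(\Phi([v,w]))=f_*[\rho(v),\rho(w)]=[\rho(\Phi(v)),\rho(\Phi(w))]=\rho([\Phi(v),\Phi(w)])
\]
and $\rho$ is injective on sections, yielding $\Phi([v,w])=[\Phi(v),\Phi(w)]$ and hence that $\varphi$ preserves brackets. Uniqueness of $\varphi$ is immediate from almost-injectivity: any extension of $Tf$ is determined by its action on $\Gamma_c(\AA)$, which in turn is forced by the formula $\rho\circ\varphi_*=f_*\circ \rho$.
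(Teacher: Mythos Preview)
The paper states this lemma without proof, treating it as a straightforward consequence of the definition of almost injectivity. Your argument supplies the missing details and is correct.

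A couple of minor points worth tightening. In the local-frame step you write $\chi v=\sum h_i(\chi e_i)$; strictly speaking, if $v=\sum g_i e_i$ locally then $\chi v=\sum(\chi g_i)e_i$, and to turn the local frame $e_i$ into global compactly supported sections you should introduce a second cutoff $\chi'$ with $\chi'\equiv 1$ on $\operatorname{supp}\chi$ and $\operatorname{supp}\chi'$ contained in the trivialising open, so that $\chi v=\sum(\chi g_i)(\chi' e_i)$ with $(\chi g_i)(x)=g_i(x)=0$. Also, your claim that smoothness of $\varphi$ ``follows automatically'' deserves one sentence: on the open set where $\chi\equiv 1$ the map $\varphi$ sends $\sum a_i e_i(y)$ to $\sum a_i\,\Phi(\chi' e_i)(f(y))$, which is visibly smooth because the $\Phi(\chi' e_i)$ are fixed smooth sections. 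With these cosmetic fixes the proof is complete.
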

\begin{proof}
One implication is obvious. For the reverse, we first recall that for a vector field $X\in \mf{X}(M)$, the push-forward by $f$ is defined by $(f_*X)_x=T_{f^{-1}(x) }f (X_{f^{-1}(x)})$. Since $\rho$ is injective on the level of sections, there is a unique map $f_*:\Gamma_c(\AA)\to \Gamma_c(\AA)$ determined by $f_* \rho(v)=\rho(f_* v)$. This factors through a $C^\infty(M)$-linear map $\varphi:\Gamma_c(\AA)\to \Gamma_c(f^*\AA)$, defined by $\varphi(v)=(f_*v)\circ f$. Thus, by the Serre-Swan theorem, we obtain bundle maps
			\[
			\begin{tikzcd}
				\AA \arrow[r, "\varphi"] \arrow[d] & f^* \AA \arrow[r]\arrow[d] & \AA \arrow[d]\\
				M \arrow[r, "\mathrm{id}"]   &M \arrow[r, "f"] & M.
			\end{tikzcd}
			\]
			The composite, that we will denote by $(\varphi, f)$, clearly induces $f_*$ on the level of sections. It is invertible because the same construction with $f^{-1}$ gives the inverse. Finally, it is easily checked that this map is a Lie algebroid morphism. 
\end{proof}

Almost injective Lie algebroids are always integrable. Moreover, the holonomy groupoid (the final object in the category of integrations of $\AA\Rightarrow M$) can be recovered from any integration (cf. \cite{debord2000}).
\begin{proposition}[\cite{androulidakiszambon2017}, Proposition 1.9]\label{prop:holonomygroupoid}
	Let $\GG\toto M$ be any integration of an almost-injective Lie algebroid $\AA\Rightarrow M$. Then, as a set, $\Hol(\AA)\toto M$ is given by $\left(\GG/\sim\right) \toto M$, where 
	\[
	g\sim h \iff \substack{ \mbox{there exists local bisections $\sigma_g$ and $\sigma_h$}\\ \mbox{ through $g$ and $h$ such that $t\circ \sigma_g=t\circ \sigma_h$.}}
	\]
	The holonomy groupoid $\Hol(\AA)\toto M$ carries the unique structure of a Lie groupoid for which the map $\GG\to \Hol(\AA)$ is a local diffeomorphism.
\end{proposition}

It follows that the groupoid anchor $\chi:\Hol(\AA)\to M\times M$ induces an injective Lie group homomorphism $\chi_*:\Bis(\Hol(\AA))\to \Bis(M\times M)\cong \Diff(M)$ which differentiates at the identity to the injective map $\rho:\Gamma_c(\AA)\to \mf{X}_c(M)$. 

Recall that the inner automorphisms $\InnAut(\AA)$ of a Lie algebroid $\AA\Rightarrow M$ are given by the time-1 flows of (time-dependent) Lie algebroid sections. If $M$ is not compact, we also want to consider $\InnAut_c(\AA)$, the inner automorphisms of $\AA$ with compact support. For an almost injective Lie algebroid, an automorphism $(\varphi, f):\AA\to \AA$ is uniquely determined by $f$, and can thus be regarded as an element of $\Diff(M)$.
\begin{corollary}\label{cor:innautliesubgroup}
	Let $\AA\Rightarrow M$ be an almost injective Lie algebroid. Then the group $\InnAut_c(\AA)$ is a regular Lie group.
\end{corollary}
\begin{proof}
	The group $\InnAut_c(\AA)$ corresponds to $\Bis_{c, 0}(\Hol(\AA))$, the identity component of the group of compactly supported bisections of $\Hol(\AA)$, which is a regular Lie group. 
\end{proof}

We expect that the inclusion $\InnAut_c(\AA)\hookrightarrow \Diff_c(M)$ is an initial map. However, we could only prove it under additional assumptions, as we will now explain.

\begin{definition}
	Let $\AA\Rightarrow M$ be an almost injective Lie algebroid. We call $\AA$ \textit{isotopically closed} when $\Gamma_c(\AA)$ generates all of its isotopies of inner automorphisms. That is, if an isotopy $\varphi_t$ in $\Diff(M)$ generated by $X_t$ is such that $\varphi_t\in \InnAut_c(\AA)$ for all $t$, then $X_t\in \rho(\Gamma_c(\AA))$ for all $t$. 
\end{definition}

\begin{example} A regular foliation $\FF\subset TM$ is always isotopically closed. More generally, if $\AA\Rightarrow M$ is an almost injective Lie algebroid such that $X\in \rho(\Gamma_c(\AA))$ if and only if $X$ is tangent to the leaves of $\AA$, then $\AA$ is isotopically closed. In particular, the log-tangent bundle of a log-manifold is isotopically closed.
\end{example}
\begin{example}\label{ex:standardisotopicallyclosed}
	If $I$ be a projective divisor ideal (Definition \ref{def:projectivedivisor}), then $T_I M$ is isotopically closed. Indeed, if $\varphi$ is an inner automorphism, then $\varphi$ is the time-1 flow of a (time-dependent) vector field that preserves $I$, and so $\varphi^* I=I$. If $\varphi_t$ is an isotopy of inner automorphisms, smooth in $\Diff(M)$ and generated by $X_t$, then 
	\[
	\frac{d}{dt}\varphi^*_t(I)=\varphi_t^*\left(\LL_{X_t}(I)\right)\subset I,
	\]
	hence $X_t$ is in $\rho(\Gamma(T_I M))$.
	
	In particular, the (normal-crossing) log-tangent bundle (Example \ref{ex:logsymplectic}) and the elliptic tangent bundle (Example \ref{ex:ellipticpoisson}) are isotopically closed. 
\end{example}
\begin{example}
	If $\AA\Rightarrow M$ is an isotopically closed almost injective Lie algebroid, and $\BB\Rightarrow Z$ an (almost injective) isotopically closed subalgebroid over a hypersurface $Z\subset M$, then the rescaling $[\AA;\BB]$ is also isotopically closed. This applies to the scattering-tangent bundle (Example \ref{ex:scatteringsymplectic}).
\end{example}

\begin{remark} In all of the above examples, the anchor also provides an embedding $\rho:\Gamma_c(\AA)\to \mf{X}_c(M)$ of topological vector spaces, and thus it is an initial map. In general, when $\rho(\Gamma_c(\AA))$ is closed in $\mf{X}_c(M)$, the map $\rho$ is an embedding by the open mapping theorem (\cite{jarchow1981}, Theorem 5.5.2) for bornological webbed spaces to inductive limits of Baire topological vector spaces (both of which include inductive limits of Fr\'{e}chet spaces, which is our case) .
\end{remark} 
We expect that every almost injective Lie algebroid is isotopically closed, at least for those for which the anchor $\rho:\Gamma_c(\AA)\to \mf{X}_c(M)$ is an embedding. However, we were not able to prove it in this paper. Therefore, it is added as a separate assumption in the proposition below.

\begin{proposition}\label{prop:innerautomorphismsinitial}
Let $\AA\Rightarrow M$ be an \emph{isotopically closed} almost injective Lie algebroid for which $\rho:\Gamma_c(\AA)\to \mf{X}_c(M)$ is an embedding of topological vector spaces. Then $\InnAut_c(\AA)$ is a regular initial Lie subgroup of $\Diff_{c,0}(M)$. 
\end{proposition}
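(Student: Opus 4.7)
The plan is to show initiality by verifying the following: every smooth curve $\gamma:\RR\to \Diff_{c,0}(M)$ whose image lies in $\InnAut_c(\AA)$ is already smooth as a curve into $\InnAut_c(\AA)$. Since both are regular convenient Lie groups, we work via logarithmic derivatives. Without loss of generality $\gamma(0)=\id$; by regularity of $\Diff_{c,0}(M)$, the right-logarithmic derivative $X_t=\delta^r\gamma(t)\in \mf{X}_c(M)$ is a smooth curve of compactly supported vector fields, and $\gamma$ is its evolution. Because $\gamma(t)\in \InnAut_c(\AA)$ for all $t$, the isotopical closure assumption forces $X_t\in \rho(\Gamma_c(\AA))$ for all $t$, and almost injectivity of $\AA$ yields a unique pointwise lift $v_t\in \Gamma_c(\AA)$ with $\rho(v_t)=X_t$.

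The crux is now to upgrade $t\mapsto v_t$ from a set-theoretic lift to a smooth curve in $\Gamma_c(\AA)$ with the timely properness required for it to be a smooth curve in the convenient model of $\Bis_{c,0}(\Hol(\AA))\cong \InnAut_c(\AA)$. The embedding hypothesis gives that $\rho^{-1}\colon \rho(\Gamma_c(\AA))\to \Gamma_c(\AA)$ is continuous and linear. In the convenient setting, continuous linear maps between convenient vector spaces are bounded and hence smooth, so the composition $v_t=\rho^{-1}(X_t)$ is smooth as a curve in $\Gamma_c(\AA)$. Almost injectivity moreover implies $\supp(v_t)=\supp(\rho(v_t))=\supp(X_t)$, so the timely properness of $X_t$ (inherited from smoothness of $\gamma$ in $\Diff_{c,0}(M)$) transfers directly to $v_t$.

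Finally, regularity of $\InnAut_c(\AA)$ (Corollary \ref{cor:innautliesubgroup}) lets us form $\tilde{\gamma}(t)=\operatorname{Evol}^r_{\InnAut_c(\AA)}(v)(t)$, a smooth curve in $\InnAut_c(\AA)$ starting at the identity. Since $\rho$ intertwines the Lie algebras and the inclusion $\InnAut_c(\AA)\to \Diff_{c,0}(M)$ is a Lie group homomorphism, the image of $\tilde{\gamma}$ in $\Diff_{c,0}(M)$ has right-logarithmic derivative $\rho(v_t)=X_t$, equal to that of $\gamma$. Uniqueness of the evolution in the regular Lie group $\Diff_{c,0}(M)$ gives $\tilde{\gamma}=\gamma$ as maps into $\Diff_{c,0}(M)$, and injectivity of the inclusion promotes this to $\tilde{\gamma}=\gamma$ as maps into $\InnAut_c(\AA)$. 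Regularity of $\InnAut_c(\AA)$ itself was already part of Corollary \ref{cor:innautliesubgroup}.

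The main obstacle, and the reason the two hypotheses appear together, is producing a \emph{smooth} lift $v_t$ rather than merely a continuous or pointwise one: topological embeddingness is needed to invert $\rho$ continuously on its image, and the convenient-setting tautology ``continuous linear $\Rightarrow$ bounded $\Rightarrow$ smooth'' does the rest. Everything else (matching logarithmic derivatives, properness of supports, uniqueness of evolutions) is essentially formal once this smoothness is in hand.
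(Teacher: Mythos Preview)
Your proof is correct and follows essentially the same route as the paper: take the logarithmic derivative of a smooth curve in $\Diff_{c,0}(M)$ landing in $\InnAut_c(\AA)$, use isotopical closure to get a pointwise lift to $\Gamma_c(\AA)$, use the embedding hypothesis to make the lift smooth, and then invoke regularity of $\Bis_{c,0}(\Hol(\AA))$ to re-evolve and match the original curve. Your treatment is somewhat more explicit than the paper's (you spell out timely properness of supports and the ``continuous linear $\Rightarrow$ bounded $\Rightarrow$ smooth'' step, whereas the paper just says ``$\rho$ is an embedding, hence initial''), but the argument is the same.
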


\begin{proof}
Let $\varphi_\epsilon$ be an isotopy in $\InnAut_c(\AA)$ that is smooth in $\Diff_c(M)$, and generated by $X_\epsilon\in \mf{X}_c(M)$. Since $\AA$ is isotopically closed, it follows that $X_\epsilon=\rho(v_\epsilon)$ for some $v_\epsilon\in \Gamma_c(\AA)$. Because $\rho$ is an embedding, it is initial, and therefore $v_\epsilon$ is a smooth curve in $\InnAut_c(\AA)=\Bis_{c, 0}(\Hol(\AA))$. By regularity of $\Bis(\Hol(\AA))$, $v_\epsilon$ integrates to a smooth curve $\sigma_\epsilon$ in $\Bis_{c, 0}(\Hol(\AA))$. Now, since $\AA$ is almost-injective, the curve $t\circ \sigma_\epsilon$ is generated by $X_\epsilon$, and therefore must agree with $\varphi_\epsilon$. It follows that $\varphi_\epsilon$ is smooth in $\InnAut_c(\AA)$. 
\end{proof}

\begin{remark}\label{rk:anyintegration}
	Let $\GG\toto M$ be any integration of an almost injective Lie algebroid $\AA\Rightarrow M$. Since $\GG\to \Hol(\AA)$ is an isomorphism in a neighbourhood of $M$, the map $\Bis_{c,0}(\GG)\to \Bis_{c,0}(\Hol(\AA))$ is surjective with discrete kernel  
	\[
	K=\ker\left( \Bis_{c, 0}(\GG)\to \Bis_{c, 0}(\Hol(\AA)\right)=\ker \left((\chi_G)_*:\Bis_{c, 0}(\GG)\to \Diff_{c, 0}(M)\right)
	\]
	Although topological discreteness is not enough to prove that $\Bis_{c, 0}(\GG)/K$ is a Lie group (see \cite{krieglmichor1997}, 38.5), there is still an isomorphism of groups
	\[
	\Bis_{c,0}(\GG)/K \to \Bis_{c, 0}(\Hol(\AA))\to \InnAut_c(\AA)
	\]
	and therefore $\Bis(\GG)/K$ admits the structure of a convenient Lie group this way.
\end{remark}

\begin{example}
	If the anchor of $\FF\Rightarrow M$ is injective already on the level of the vector bundles, it corresponds to a regular (i.e. non-singular) foliation. In this case, there is an identification $\InnAut_c(\FF)=\Fol_{c,0}(\FF)$, where $\Fol(\FF)$ is the group of diffeomorphisms sending each leaf to itself, that equips $\Fol(\FF)$ with the structure of a regular initial Lie subgroup of $\Diff(M)$. It coincides with the one described in \cite{Rybicki2001foliated}. 
\end{example}

\begin{remark}
	Even for a (non-singular) foliation $\FF\Rightarrow M$, the subgroup $\InnAut(\FF)=\Fol(\FF)$ may not be embedded in $\Diff(M)$. Consider for instance a foliation on the torus $T^2$ with coordinates $(\theta_1, \theta_2)$ whose leaves are integral curves of a constant vector field $X=\del_{\theta_1}+a\del_{\theta_2}$, where $a\in \RR$ is irrational. For each $t\in \RR$, the map $\varphi_t(\theta_1, \theta_2)=(\theta_1+t, \theta_2)$ preserves the foliation, and for a dense subset of $\RR$, the maps $\varphi_t$ are foliated. Hence, there is a sequence $\varphi_{t_n}\in \Fol(\FF)$ such that $\varphi_{t_n}$ converges to the identity in $\Diff(M)$, but does not converge in $\Fol(\FF)$. 
\end{remark}

Another interesting application arises when we consider almost everywhere non-degenerate Poisson structures, combined with the Lagrangian bisections of its integrating symplectic groupoid. 

\begin{theorem}\label{thm:hamlocinitial}
	Let $(M, \pi)$ be a Poisson manifold whose non-degeneracy locus is open and dense. Then the group $\Hamlocc(M, \pi)$ is a regular Lie group, which is an initial subgroup of $\Diff(M, \pi)$ whenever the cotangent algebroid $T^*M$ is isotopically closed and $\pi^\sharp:\Gamma_c(T^*M)\to \mf{X}_c(M)$ is an embedding. 
	
	More generally, if $(\AA, \pi_\AA)\Rightarrow M$ is an almost injective Poisson Lie algebroid and $\pi_\AA$ is generically non-degenerate, then $\Hamlocc(\AA, \pi_\AA)$ is a regular Lie group and and initial subgroup of $\Diff(M, \pi)$ whenever $\AA^*$ is isotopically closed and $\rho_\AA\circ\pi_\AA^\sharp:\Omega^1_c(\AA)\to \mf{X}_c(M)$ is an embedding. 
\end{theorem}
\begin{proof}
	We prove the general case. First, the dual $\AA^*$ becomes an almost-injective Lie algebroid as in Remark \ref{rk:exactbialgebroids}. Let $(\GG^*, \Pi)\toto (M, \pi)$ be a Poisson groupoid integrating the bialgebroid $(\AA^*, \AA)$. As in Remark \ref{rk:anyintegration}, we set
	\[
	K=\ker\left( (\chi_{\GG^*})_*:\Bis_{c, 0}(\GG^*)\to \Diff(M)\right),
	\]
	which is a discrete subgroup of $\Bis_{c, 0}(\GG^*)$. Since this is the kernel of a smooth covering map of Lie groups, we can find a neighborhood $\mathcal{W}$ of the identity in $\Bis_{c, 0}(\GG^*)$ satisfying
	\[
	\mc{W}=\mc{W}^{-1}, \quad \mc{W}\mc{W}\cap k\mc{W}\mc{W}=\emptyset \quad \mbox{for all $k\in K$}.
	\]
	 By Theorem \ref{thm:dualintegrationlinearizable}, the Poisson groupoid $(\GG^*, \Pi)$ is linearizable, and therefore the identity component $\Bis_{c,0}(\GG^*, \Pi)$ of the group of coisotropic bisections is an embedded submanifold of $\Bis(\GG^*)$ (Theorem \ref{thm:coisotropicbisectionsliegroup}), it follows that 
	\[
	K_{\Pi}=K\cap \Bis_{c, 0}(\GG^*, \Pi)
	\]
	is discrete in $\Bis_{c,0}(\GG^*, \Pi)$. Moreover, shrinking $\mc{W}$ if necessary, we have, by setting $\mc{W}_{\Pi}=\mc{W}\cap \Bis(\GG^*, \Pi)$, an open neighborhood of the identity satisfying
	\[
	\mc{W}_\Pi=\mc{W}_\Pi^{-1}, \quad \mc{W}_\Pi\mc{W}_\Pi\cap k\mc{W}_\Pi\mc{W}_\Pi=\emptyset \quad \mbox{for all $k\in K_\Pi$}.
	\]
	By \cite{krieglmichor1997}, 38.5, the quotient $\Bis_{c, 0}(\GG^*, \Pi)/K_\Pi \cong \Hamlocc(\AA, \pi_\AA)$ is then a regular Lie group.
	
	Under the additional assumptions, it becomes an initial Lie subgroup of $\Diff_c(M)$, because $\InnAut_c(\AA^*)$ is. 
\end{proof}

\subsection{Poisson structures of divisor type}\label{sec:divisortypdiffeos} Our approach is particularly effective for a class of generically non-degenerate Poisson structures, namely those of divisor type considered \cite{klaasse2018} (see Section \ref{sec:divisortype}).
Let $(M, \pi)$ be a Poisson structure of divisor type with projective divisor ideal $I_\pi$. The algebroid $T_{I_\pi}M$ is almost injective, and by Proposition \ref{prop:poissonprojectivedivisor}, every Poisson vector field is a section of $T_{I_\pi}M$. When $T_{I_\pi}^* M$ locally admits bases of closed sections, the Poisson structure $\pi$ lifts to a Poisson structure $\pi_I$ on $T_{I_\pi}M$ by Theorem 4.35 in \cite{klaasse2018}. If in addition $I_\pi$ is a standard ideal, the lift $\pi_I$ is non-degenerate. 

\begin{theorem}\label{thm:poissondivisortype}
	Let $(M, \pi)$ be a Poisson structure of standard divisor type. Suppose that $T_{I_\pi}M$ locally admits bases of closed sections. Then $\Diff(M, \pi)$ is a regular initial Lie subgroup of $\Diff(M)$.  
\end{theorem}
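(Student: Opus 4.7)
The strategy is to realize $\Diff(M,\pi)$ as a quotient of Lagrangian bisections of a symplectic groupoid integrating the Lie algebroid $T_{I_\pi}M$. Standardness of $I_\pi$ and the hypothesis on local closed bases of $T_{I_\pi}^*M$ guarantee, via Theorem~4.35 of \cite{klaasse2018}, that $\pi$ lifts to a non-degenerate bivector $\pi_I\in\Gamma(\wedge^2 T_{I_\pi}M)$, so $(T_{I_\pi}M, T_{I_\pi}^*M, \pi_I)$ is a triangular Lie bialgebroid with $r$-matrix of $0$-cosymplectic type. The algebroid $T_{I_\pi}M$ is almost injective (being of divisor type), isotopically closed by Example~\ref{ex:standardisotopicallyclosed}, and the anchor $\rho:\Gamma_c(T_{I_\pi}M)\to \mf{X}_c(M)$ is an embedding of convenient vector spaces.

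Pick any source-connected integration $\GG\toto M$ of $T_{I_\pi}M$. Theorem~\ref{thm:cosymplecticpairgroupoidlinearizable} yields linearizability of the Poisson groupoid $(\GG,\Pi:=\lar{\pi_I}-\rar{\pi_I})\toto (M,\pi)$, so Theorem~\ref{thm:coisotropicbisectionsliegroup} equips $\Bis(\GG,\Pi)$ with a regular embedded Lie subgroup structure in $\Bis(\GG)$. Following the blueprint of the proof of Theorem~\ref{thm:hamlocinitial}, set
\[
K=\ker\bigl((\chi_\GG)_*:\Bis_{c,0}(\GG)\to \Diff_{c,0}(M)\bigr),\qquad K_\Pi=K\cap \Bis_{c,0}(\GG,\Pi).
\]
Almost-injectivity of $T_{I_\pi}M$ makes $K$ discrete in $\Bis_{c,0}(\GG)$, and since $\Bis_{c,0}(\GG,\Pi)$ is embedded, $K_\Pi$ is discrete in it. Hence $\Bis_{c,0}(\GG,\Pi)/K_\Pi$ is a regular Lie group mapping injectively into $\Diff_{c,0}(M)$.

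The key identification step is to show that this image is exactly $\Diff_{c,0}(M,\pi)$. One inclusion is Proposition~\ref{prop:coisotropicbisections}(iii). For the converse, given $X\in\mf{X}_c(M,\pi)$, Proposition~\ref{prop:poissonprojectivedivisor} lifts $X$ uniquely to $v\in\Gamma_c(T_{I_\pi}M)$, and uniqueness of the non-degenerate lift forces $\LL_v(\pi_I)=0$, i.e.\ $d_{T_{I_\pi}^*M}v=0$; Proposition~\ref{prop:coisotropicbisectionsliealgebra} then exponentiates $v$ to a coisotropic bisection isotopy covering the flow of $X$. For non-identity components one transports the chart by left multiplication, using that $\Diff(M,\pi)$ is a topological group.

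Initialness is inherited from $\InnAut_c(T_{I_\pi}M)$. By isotopic closedness and the embedding property of $\rho$, Proposition~\ref{prop:innerautomorphismsinitial} shows $\InnAut_c(T_{I_\pi}M)\cong \Bis_{c,0}(\GG)/K$ is initial in $\Diff_c(M)$; since $\Bis_{c,0}(\GG,\Pi)/K_\Pi$ is embedded in it (as the quotient of an embedded subgroup by a discrete subgroup), the composite inclusion into $\Diff(M)$ is initial. Regularity of the quotient follows from regularity of $\Bis_{c,0}(\GG,\Pi)$ together with Lemma~\ref{lem:subgroupregular}. The main technical obstacle I expect is the identification step: a Poisson isotopy in $\Diff(M,\pi)$ that is smooth as a curve in $\Diff(M)$ must be shown to produce a curve in $\Bis_{c,0}(\GG,\Pi)/K_\Pi$ that is smooth into the closed subspace $\Gamma_c(T_{I_\pi}M, d_{T_{I_\pi}^*M})$ of the model space, for which isotopic closedness and the embedding property of $\rho$ are both essential.
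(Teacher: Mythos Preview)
Your proposal is correct and follows essentially the same route as the paper. The paper's proof is terser because it delegates the heavy lifting to Theorem~\ref{thm:almostinjectivePoisson}: it observes that $\pi$ lifts to a non-degenerate (hence $0$-cosymplectic) $\pi_I$ on $T_{I_\pi}M$, that $T_{I_\pi}M$ is isotopically closed (Example~\ref{ex:standardisotopicallyclosed}), and then invokes Theorem~\ref{thm:almostinjectivePoisson} to conclude that $\InnAut_c(T_{I_\pi}M,\pi_I)=\Diff_{c,0}(M,\pi)$ is a regular initial Lie subgroup. You have effectively reproduced the proof of Theorem~\ref{thm:almostinjectivePoisson} inline, with one cosmetic variation: rather than working with the holonomy groupoid (where $\Bis_{c,0}(\Hol(\AA))\to\Diff_{c,0}(M)$ is already injective), you take an arbitrary integration and quotient by the discrete kernel $K$, as in Remark~\ref{rk:anyintegration}. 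Both approaches are equivalent and yield the same Lie group structure.
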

\begin{proof}
	We will equip the path component $\Diff_{c, 0}(M, \pi)$ of $\Diff(M,\pi)$ with the structure of a regular initial Lie subgroup of $\Diff_{c, 0}(M)$. By Proposition \ref{prop:poissonprojectivedivisor}, every Poisson diffeomorphism in $\Diff_{c, 0}(M, \pi)$ is an inner automorphism of $T_{I_\pi}M$, to which $\pi$ lifts to a non-degenerate Poisson structure $\pi_I$. By Example \ref{ex:standardisotopicallyclosed}, the Lie algebroid $T_{I_\pi}M$ is isotopically closed, and therefore $\InnAut_c(T_{I_\pi}M, \pi_I)$ is a regular initial Lie subgroup of $\Diff_{c,0}(M)$ by Theorem \ref{thm:almostinjectivePoisson}. Now, the whole group $\Diff(M, \pi)$ becomes a Lie group by declaring $\Diff_c(M, \pi)$ to be open $\Diff(M, \pi)$ and using group multiplication to translate the manifold structure to the entire group. This way, $\Diff(M, \pi)$ is becomes an initial subgroup of $\Diff(M)$. 
\end{proof}

\subsubsection{Log-symplectic structures}

Let $(M, Z)$ be a (normal-crossing) log-manifold (Example \ref{ex:logsymplectic}). Analogous to the boundary of a manifold with corners, the hypersurface $Z$ comes with a natural stratification $Z=\cup_{j=0}^{2n-1}Z_j$ (the $Z_j$'s are (unions of) the $j$-dimensional orbits of $T_Z M$). Note that by the local form, each $Z_j$ is embedded (but not closed!). Because $(M\setminus \left(\cup_{i=1}^{2n-2}Z_i\right), Z_{2n-1})$ is a smooth log-manifold, the log-symplectic structure induces a cosymplectic structure on $i:Z_{2n-1}\hookrightarrow M$, whose one-form is given by $\alpha=i^*(\iota_\EE \omega)$, where $\EE$ is the Euler vector field obtained via any tubular neighbourhood of $Z_{2n-1}$ (\cite{guilleminmirandapires2014}, Proposition 10).  
\begin{lemma}\label{lem:loglocallyhamiltonianfoliated}
	Let $(M, Z, \omega)$ be a log-symplectic manifold with underlying Poisson structure $\pi$. A vector field $X\in \mf{X}(M)$ is tangent to the symplectic foliation of $\pi$ if and only if there is a one-form $\eta\in \Omega^1(M)$ such that $\pi^\sharp(\eta)=X$. 
\end{lemma}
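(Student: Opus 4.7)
The ``if'' direction is immediate: for any $p \in M$, $X_p = \pi^\sharp(\eta)_p \in \operatorname{Im}(\pi^\sharp_p)$, and $\operatorname{Im}(\pi^\sharp_p)$ is the tangent space of the symplectic leaf through $p$.

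For the ``only if'' direction, I would exploit the factorization $\pi^\sharp = \rho_Z \circ \pi_Z^\sharp \circ \rho_Z^*$, where $\pi_Z \in \Gamma(\wedge^2 T_Z M)$ is the non-degenerate lift of $\pi$ (so $\pi_Z^\sharp$ is a bundle isomorphism with inverse $\omega_Z^\flat$) and $\rho_Z \colon T_Z M \to TM$ is the anchor. The statement is local, so the argument reduces to a calculation in adapted coordinates $(x_1, \ldots, x_k, x_{k+1}, \ldots, x_n)$ around a point of $Z$, with $Z = \{x_1 \cdots x_k = 0\}$.

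First I would show that $X$ is a log-vector field, i.e.\ $X = \rho_Z(Y)$ for some smooth $Y \in \Gamma(T_Z M)$: since every symplectic leaf of $\pi$ is contained in a single stratum of $Z$, any vector field tangent to the foliation must be tangent to each stratum, and at any point of $Z$ this is equivalent to tangency to each hypersurface component through that point, which is precisely the defining property of log-vector fields. Consequently $\beta := \omega_Z^\flat(Y)$ is a smooth section of $T_Z^*M$, and the lemma reduces to showing that $\beta$ lies in $\rho_Z^*(\Omega^1(M))$. In a local trivialization of $T_Z M$ by $x_1 \partial_{x_1}, \ldots, x_k \partial_{x_k}, \partial_{x_{k+1}}, \ldots, \partial_{x_n}$, membership in $\rho_Z^*(\Omega^1(M))$ is characterized by the condition that $\beta(x_i \partial_{x_i})$ vanishes on $\{x_i = 0\}$ for each $i \leq k$, after which Hadamard's lemma writes $\beta = \rho_Z^*(\eta_\alpha)$ for a smooth local one-form $\eta_\alpha$.

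Using a local Darboux-type normal form for normal-crossing log-symplectic structures (so that $\pi_Z$ takes the standard shape $\sum_{i\leq k} (x_i \partial_{x_i}) \wedge \partial_{y_i} + \sum_{j>k} \partial_{x_j} \wedge \partial_{y_j}$ in suitable coordinates), a direct computation yields $\beta(x_i \partial_{x_i}) = \pm (\text{coefficient of } \partial_{y_i} \text{ in } X)$, and precisely this coefficient vanishes on $\{x_i = 0\}$ because the tangent space of the symplectic leaf at a point of $\{x_i = 0\}$ does not contain $\partial_{y_i}$. A partition of unity, combined with the $C^\infty(M)$-linearity of $\pi^\sharp$, then patches the local one-forms into a global $\eta \in \Omega^1(M)$ with $\pi^\sharp(\eta) = X$. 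The main obstacle is the careful local analysis at the corner strata, where several hypersurfaces meet and the symplectic leaves have larger codimension: one must verify that ``tangent to the leaf through $p$'' imposes the correct divisibility on the $\partial_{y_i}$-coefficients of $X$ simultaneously for all $i$ with $x_i(p) = 0$, so that Hadamard's lemma applies in each of these directions at once.
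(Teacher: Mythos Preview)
Your argument is correct and shares the same skeleton as the paper's: both observe that $X$ is a log-vector field, set $\eta=\omega^\flat(X)$ as a log-one-form, and then verify the local criterion $\eta(x_i\partial_{x_i})|_{\{x_i=0\}}=0$ to conclude that $\eta$ is an honest smooth one-form. The genuine difference is in how that vanishing is obtained. The paper never invokes a Darboux normal form; instead it uses that the log-symplectic structure induces a cosymplectic structure on the top stratum $Z_{2n-1}$ with defining one-form $\alpha=i^*(\iota_{\mathcal E}\omega)$, so that the symplectic leaves inside $Z_{2n-1}$ are exactly $\ker\alpha$. Tangency of $X$ to the foliation then gives $0=\alpha(X)=\omega(x_i\partial_{x_i},X)=-\eta(x_i\partial_{x_i})$ on $Z_{2n-1}\cap\{x_i=0\}$, and the extension to the deeper corner strata follows by continuity, since $Z_{2n-1}\cap\{x_i=0\}$ is dense in $\{x_i=0\}$. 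Your route instead appeals to a normal-crossing log-Darboux theorem and reads off the vanishing directly at every point, corners included. The paper's approach has the advantage of relying only on the well-known cosymplectic structure on the smooth hypersurface locus, sidestepping the question of whether a Darboux theorem for normal-crossing log-symplectic structures is available in the literature; your approach is more hands-on and treats the corners head-on, but imports that normal form as an external ingredient. One small inefficiency in your write-up: since $\eta=\omega^\flat(X)$ is already a globally defined log-one-form and smoothness is a local property, the partition-of-unity patching step is unnecessary.
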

\begin{proof}
	Any vector field of the form $\pi^\sharp(\eta)$ it tangent to the symplectic foliation. Conversely, if $X$ is tangent to the symplectic foliation, it is certainly tangent to $Z$ and thus can be regarded as a section of $T_Z M$, and thus we can set $\eta=\omega^\flat(X)$.  Since $X$ is tangent to the symplectic foliation, it must be in the kernel of $\alpha=i^*(\iota_\EE\omega)$ when restricted to $Z_{2n-1}$. In local coordinates $(x_i)$ adapted to $Z$, we have $Z_{2n-1}=\{ \prod_{i=1}^k x_i=0: \mbox{no two $x_i$'s are zero}\}$. Therefore, on $Z_{2n-1}\cap \{x_i=0\}$, the Euler vector field is given by $x_i\del_{x_i}$. We find that
	\[
	0=\iota_X\alpha=\omega\left( x_i\deldel{x_i}, X\right)\big\vert_{Z_{2n-1}\cap\{x_i=0\}}=-\eta\left(x_i\deldel{x_i}\right)\big\vert_{Z_{2n-1}\cap\{x_i=0\}}.
	\]
	By continuity, $\iota_{x_i\del_{x_i}}\eta=0$ along $\{x_i=0\}$. Since this is true for all $i=1, \dots, k$, it follows that $\eta\in \Omega^1(M)$. 
\end{proof}
Consequently, for log-symplectic manifolds, the locally Hamiltonian and foliated vector fields coincide! This implies that $\Hamlocc(M, \pi)$ is the identity component of $\Fol_c(M, \pi)$. 
\begin{theorem}\label{thm:logsymplecticliegroup}
	Let $(M, Z)$ be a log-manifold. Then 
	\[
	\Diff(M, Z)=\{ \varphi\in \Diff(M): \varphi(Z)=Z\}
	\]
	is a regular initial Lie subgroup of $\Diff(M)$ with Lie algebra $\mf{X}_c(M, Z)$.

	Let $(M, Z, \omega)$ be a log-symplectic manifold with underlying Poisson structure $\pi$. Then $\Diff(M, \pi)$ is a regular embedded Lie subgroup of $\Diff(M, Z)$, with Lie algebra $\mf{X}_c(M, \pi)$. 
	
	Finally, $\Hamlocc(M, \pi)$ coincides with the identity component of $\Fol(M, \pi)$, and comes with the structure of a regular Lie group for which it is initial in $\Diff(M, \pi)$, with Lie algebra $\hamlocc(M, \pi)=\mf{fol}_c(M, \pi)$. 
\end{theorem}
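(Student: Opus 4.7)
The plan is to treat the three assertions in order, with the log-tangent bundle $T_ZM$ serving as the central algebraic object. Note that $T_ZM$ is almost injective and, by Example~\ref{ex:standardisotopicallyclosed}, isotopically closed, and that its anchor identifies $\Gamma_c(T_ZM)$ with the closed subspace $\mf{X}_c(M,Z)\subset \mf{X}_c(M)$, making $\rho:\Gamma_c(T_ZM)\to \mf{X}_c(M)$ an embedding of topological vector spaces. For the first assertion, Proposition~\ref{prop:innerautomorphismsinitial} then produces $\InnAut_c(T_ZM)=\Diff_{c,0}(M,Z)$ as a regular initial Lie subgroup of $\Diff_{c,0}(M)$ with Lie algebra $\mf{X}_c(M,Z)$; the remaining connected components of $\Diff(M,Z)$ inherit the Lie group structure via left translation from the identity component.

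For the second assertion, I would apply Theorem~\ref{thm:poissondivisortype}: the ideal $I_\pi=I_Z$ is standard (Example~\ref{ex:logsymplectic}), and $T_Z^*M$ locally admits bases of closed sections, namely the log-coordinate coframe $dx_i/x_i,\, dx_j$ dual to the generators of $T_ZM$. This exhibits $\Diff(M,\pi)$ as a regular initial Lie subgroup of $\Diff(M)$ integrating $\mf{X}_c(M,\pi)$. To upgrade the inclusion $\Diff(M,\pi)\subset\Diff(M,Z)$ to an embedded Lie subgroup, I would invoke Theorem~\ref{thm:coisotropicbisectionsliegroup} applied to any integration $\GG\toto M$ of $T_ZM$ endowed with the Poisson structure $\Pi=\lar{\pi_{T_ZM}}-\rar{\pi_{T_ZM}}$, whose linearizability is supplied by Theorem~\ref{thm:cosymplecticpairgroupoidlinearizable} (the $T_ZM$-Poisson structure $\omega^{-1}$ is non-degenerate, hence of $0$-cosymplectic type). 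This identifies $\Bis(\GG,\Pi)$ locally with the closed subspace $\{v\in \Gamma_c(T_ZM):[\pi_{T_ZM},v]=0\}=\mf{X}_c(M,\pi)$ of $\Gamma_c(T_ZM)$, and descending to $M$ via the base-map Lie group homomorphism yields the desired embedded structure.

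For the third assertion, the crucial step is the Lie algebra identification $\fol_c(M,\pi)=\hamlocc(M,\pi)$. The inclusion $\hamloc\subset\fol$ is automatic, and conversely Lemma~\ref{lem:loglocallyhamiltonianfoliated} writes any foliated vector field as $\pi^\sharp\eta$ for some smooth $\eta\in\Omega^1(M)$, obtained as the descent of the log-one-form $\omega^\flat(X)\in\Omega^1(T_ZM)$. When $X$ is moreover Poisson, $\LL_X\omega=0$ on $T_ZM$ and Cartan's formula forces $\omega^\flat(X)$ to be log-closed; since this log-one-form already descends to a smooth one-form $\eta$, we get $d\eta=0$ in $\Omega^2(M)$. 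Integrating time-dependent vector fields via Lemma~\ref{lem:poissonisotopy} then yields $\Fol_{c,0}(M,\pi)=\Hamlocc(M,\pi)$ as abstract groups. Finally, Theorem~\ref{thm:hamlocinitial} applied to $(\AA,\pi_\AA)=(T_ZM,\omega^{-1})$ supplies the Lie group structure: $\pi_\AA$ is non-degenerate, so $\AA^*\cong T_ZM$ is isotopically closed and $\rho_\AA\circ\pi_\AA^\sharp$ factors through the embedding $\rho$, giving a regular initial Lie subgroup of $\Diff(M,\pi)$ with Lie algebra $\hamlocc(T_ZM,\pi_{T_ZM})$, which reduces via $\rho$ to $\fol_c(M,\pi)=\hamlocc(M,\pi)$.

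The most delicate point I anticipate is confirming that the Lie group structure on $\Hamlocc(M,\pi)=\Fol_{c,0}(M,\pi)$ produced by Theorem~\ref{thm:hamlocinitial} is compatible with the identity component structure it inherits from $\Fol(M,\pi)\subset \Diff(M,\pi)$ of the second assertion. This should reduce to the observation that both constructions are built from Lagrangian bisections of the same integration of $T_ZM$, with the matching produced by the Lie algebra identification above and the regularity of both Lie groups.
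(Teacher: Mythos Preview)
Your treatment of the first two assertions is correct and follows the paper's approach: Proposition~\ref{prop:innerautomorphismsinitial} for $\Diff(M,Z)$, and Theorem~\ref{thm:poissondivisortype} (unwound via Theorem~\ref{thm:almostinjectivePoisson} and Theorem~\ref{thm:cosymplecticpairgroupoidlinearizable}) for $\Diff(M,\pi)$. Your justification that $\fol_c(M,\pi)=\hamlocc(M,\pi)$, by combining Lemma~\ref{lem:loglocallyhamiltonianfoliated} with the Cartan argument that $\omega^\flat(X)$ is closed when $X$ is Poisson, is also correct.

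There is, however, a genuine gap in your application of Theorem~\ref{thm:hamlocinitial}. You apply it to $(\AA,\pi_\AA)=(T_ZM,\omega^{-1})$, but since $\omega^{-1}$ is \emph{non-degenerate} on $T_ZM$, the resulting Lie algebra is
\[
\hamloc(T_ZM,\omega^{-1})=(\omega^{-1})^\sharp\bigl(\Omega^1_{\mathrm{cl}}(T_ZM)\bigr)=\{\,v\in\Gamma(T_ZM):\LL_v\omega=0\,\},
\]
which under $\rho$ is exactly $\mf{X}_c(M,\pi)$, not $\fol_c(M,\pi)$. In other words, $\Hamlocc(T_ZM,\omega^{-1})$ recovers the identity component of $\Diff(M,\pi)$ again, not the foliated group. (Concretely: on $(\RR^2, x\,\partial_x\wedge\partial_y)$ the vector field $\partial_y$ lies in $\hamloc(T_ZM,\omega^{-1})$ but is not foliated, since the points of $Z=\{x=0\}$ are zero-dimensional leaves.)

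The fix, which is what the paper does, is to apply Theorem~\ref{thm:hamlocinitial} with $\AA=TM$ and $\pi_\AA=\pi$ itself. Then $\AA^*=T^*M$ carries the cotangent Lie algebroid structure, which is almost injective because $\pi$ is generically non-degenerate, and the group produced is precisely $\Hamlocc(M,\pi)$ with Lie algebra $\pi^\sharp\bigl(\Omega^1_{\mathrm{cl},c}(M)\bigr)=\hamlocc(M,\pi)$. The isotopic-closedness of $T^*M$ follows from Lemma~\ref{lem:poissonisotopy} together with Lemma~\ref{lem:loglocallyhamiltonianfoliated}: an isotopy of inner automorphisms of $T^*M$ lies in $\Fol(M,\pi)$, hence is generated by foliated vector fields, which by the lemma lie in $\pi^\sharp(\Omega^1(M))=\rho_{T^*M}(\Gamma(T^*M))$.
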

\begin{remark}
	If $Z$ is a smooth hypersurface, Ebin and Marsden proved in \cite{ebinmarsden1970} that $\Diff(M, Z)$ is actually embedded in $\Diff(M)$ by means of a metric for which $Z$ is geodesically closed. This is not immediate from our approach.
\end{remark}

\begin{proof}
	Let $\GG\toto M$ be the holonomy groupoid of $T_ZM$. The identity path-component of $\Diff(M, Z)$ identifies with the (compactly supported) inner automorphisms of $T_ZM$, which is an initial subgroup of $\Diff(M)$ by Proposition \ref{prop:innerautomorphismsinitial} after identifying $\Diff_{c, 0}(M, Z)=\Bis_{c, 0}(\GG)$. The second statement is a special case of Theorem \ref{thm:poissondivisortype}. Finally, due to Lemma \ref{lem:loglocallyhamiltonianfoliated}, the identity component of $\Fol(M,\pi)$ coincides with $\Hamlocc(M, \pi)$. The cotangent algebroid $T^*M$ is almost injective an isotopically closed by Lemma \ref{lem:poissonisotopy}, and therefore $\Hamlocc(M, \pi)$ obtains the structure of a regular initial Lie subgroup of $\Diff(M)$ via Theorem \ref{thm:hamlocinitial}.
\end{proof}

\begin{remark}
	When $Z\subset M$ is a smooth hypersurface, explicit integrations of the log-tangent bundle $T_ZM$ have been constructed in \cite{gualtierili2014}. One of its integrations, that we call the \textit{log-pair} groupoid $\Pair_Z(M)$, can be obtained by blowing up $Z\times Z$ in $M\times M$. After removing a certain subset (namely the arrows that point into and out of $Z$), $\Pair_Z(M)\toto M$ becomes a groupoid integrating $T_ZM$. If $\omega$ is a log-symplectic structure on $T_ZM$, then $\Pi^{-1}=s^*\omega-t^*\omega$ is a log-symplectic structure on $p_Z^!\Pair_Z(M)=T_{\mc{H}}\Pair_Z(M)$ (see Proposition \ref{prop:pullbackscatteringb}). For the resulting Poisson groupoid $(\Pair_Z(M), \Pi)$, the anchor to $M\times M$ induces isomorphisms $\Bis(\Pair_Z(M))\cong \Diff(M, Z)$ and $\Bis(\Pair_Z(M), \Pi)\cong \Diff(M, \pi)$. Thus in this case, there is a very explicit and concrete Poisson groupoid whose group of coisotropic bisections corresponds exactly to the group of Poisson diffeomorphisms.
\end{remark}

\subsubsection{Elliptic symplectic structures}

Let $|D|$ be an elliptic divisor on $M$. If $X\in \mf{X}(M)$ is a vector field with the property that $[X, Y]$ is elliptic for all $Y\in \mf{X}(M, |D|)$, then $X\in \mf{X}(M, |D|)$. This implies that if $\varphi_t$ is an isotopy generated by $X_t$ consisting of automorphisms of $T_{|D|}M$, then $X_t$ is an elliptic vector field. It follows that $\Diff_{c,0}(M, |D|):=\operatorname{Aut}_{c, 0}(T_{|D|}M)\cong \InnAut_c(T_{|D|}M)$.

The symplectic foliation of an elliptic symplectic manifold $(M, \pi)$ depends on the elliptic residue of the elliptic two-form $\omega\in \Omega^2(T_{|D|}M)$, defined as follows. Near the degeneracy locus $i:D\hookrightarrow M$, we can write
\[
\omega=\lambda d\log r\wedge d\theta+d\log r \wedge \alpha_1+ d\theta\wedge\alpha_2 + \beta
\]
where $\lambda\in \Omega^0(D;\mf{k}^*)$, with $\mf{k}=\det \left( \ker \rho_{T_{|D|}M}\vert_D\right)$, is locally constant. Then $\lambda$ is the \textit{elliptic residue} of $\omega$.

\begin{definition}
	Let $(M, \pi)$ be an elliptic symplectic manifold with $\lambda$ defined as above. If $\lambda\neq 0$, we say that $(M, \pi)$ has \textit{non-zero elliptic residue}. If $\lambda=0$, then we say that $(M, \pi)$ has \textit{zero} or \textit{vanishing elliptic residue}.
\end{definition}

 If $(M, \pi)$ has non-zero elliptic residue, then the symplectic foliation of $(M, \pi)$ consists of (the connected components of) $M\setminus D$ and $D$. Therefore, every Poisson vector field is automatically foliated. 

When the elliptic residue vanishes, the symplectic foliation is given by connected components of $M\setminus D$ and the foliation induced by the kernels of the closed one-forms $i^*\alpha_1, i^*\alpha_2\in \Omega^1(D)$ (these are the radial and $\theta$-residues, respectively).

\begin{lemma}\label{lem:ellipticfoliated}
Let $(M, \pi)$ be an elliptic symplectic manifold, $|D|$ its elliptic divisor and $\omega\in \Omega^2(T_{|D|}M)$ its elliptic symplectic form. Assume $\omega$ has zero elliptic residue. Then every foliated Poisson vector field is locally Hamiltonian.
\end{lemma}
\begin{proof}
	This proof is inspired by the proof of Lemma 1.10 in \cite{Cavalcantigualtieri2017}. Locally, in polar coordinates $(r, \theta)$ around $D$, we can write
	\[
	\omega=d\log r \wedge \alpha_1 +d\theta \wedge \alpha_2 + \beta.
	\]
	An elliptic vector field $X\in \mf{X}(M, |D|)$ is Poisson if and only if $\iota_X \omega\in \Omega^1(T_{|D|}M)$ is closed and it is foliated if and only if $\iota_X\alpha_1$ and $\iota_X\alpha_2$ vanish over $D$. If $X\in \mf{fol}(M, \pi)$, we have to show that $\iota_X\omega$ comes from a de Rham form on $M$. It is enough to show this in the locally. 

Let $\eta=\iota_X\omega\in \Omega^1(T_{|D|}M)$ be the corresponding closed elliptic form. The first step is to show that $\eta$ is cohomologous to a smooth de Rham form. As in the proof of Lemma 1.10 in \cite{Cavalcantigualtieri2017}, we denote by $\rho_t:U\to U$ for $t\in S^1$ the $S^1$-action in a tubular neighbourhood $U$ of $D$, generated by $\del_\theta$. Since this is an elliptic vector field, $\rho^*_t$ acts trivially on the level of the cohomology of $T_{|D|} M$.
\begin{claim*}
	The form $\rho^*_t\eta$ defines a foliated Poisson vector field for all $t\in S^1$.
\end{claim*}
\begin{claimproof}
This is a direct computation. The form $\rho^*_t\eta$ is certainly closed. It is foliated when the associated vector field $(\omega^\flat)^{-1}(\rho^*_t\eta)$ paired with $\alpha_i$ vanishes over $D$. For $\alpha_1$, this can be shown as follows. First, we observe that
\[
\rho^*_t\omega=d\log r\wedge \rho^*_t \alpha_1+d\theta \wedge \rho^*_t \alpha_2 +\rho^*_t \beta.
\]
Then 
\begin{align*}
	\langle \alpha_1, (\omega^\flat)^{-1}(\rho^*_t \eta) \rangle&=-\langle (\omega^\flat)^{-1}(\alpha_1), \rho^*_t\eta \rangle = -\langle r\del_r, \rho^*_t (\iota_X \omega) \rangle \\
	&= - \langle r\del_r, \iota_{\rho^*_t X} (\rho^*_t\omega) \rangle= (\rho^*_t\omega)(r\del_r, \rho^*_t X)\\
	&=\langle \rho^*_t \alpha_1, \rho^*_t X\rangle =\rho^*_t \langle \alpha_1, X\rangle,
\end{align*}
which vanishes over $D$ because $\rho_t$ leaves $D$ invariant. The computation with $\alpha_2$ is the same. This proves the claim.
\end{claimproof}
By averaging the forms $\rho^*_t\eta$ over $S^1$, we obtain an $\theta$-invariant form $\overline{\eta}$ for which $\overline{X}=(\omega^\flat)^{-1}(\overline{\eta})$ is foliated. Writing
\[
\overline{\eta}=f_1 d\log r+ f_2d\theta + \zeta
\]
for smooth functions $f_0, f_1$ on $U$ and $\zeta\in \Omega^1(U)$, we have that
\begin{itemize}[noitemsep, topsep=0em]
	\item $f_1$ and $f_2$ are $S^1$-invariant, and
	\item $f_i=\iota_{\overline{X}}\alpha_i$, so it vanishes over $D$.
\end{itemize}
Therefore, $f_1$ and $f_2$ are divisible by $r^2$ and thus $\overline{\eta}$ is a smooth one-form on $U$. 

Finally, since $\overline{\eta}$ is cohomologuous to $\eta$, we can write $\eta=\overline{\eta}+df$. Since every exact elliptic one-form comes from a smooth one-form on $M$, it follows that $\eta$ is indeed a smooth one-form on $U$ (in general, if $\AA\Rightarrow M$ is a Lie algebroid, with differential $d_\AA$, then $d_\AA f=\rho_\AA^* (df)$ for any $f\in C^\infty(M)$). This completes the proof.
\end{proof}

With this lemma out of the way, we have a complete picture of the Poisson diffeomorphism group of elliptic symplectic manifolds.

\begin{theorem}
	Let $|D|$ be an elliptic divisor. Then $\Diff(M, |D|)$ is a regular initial Lie subgroup of $\Diff(M)$ with Lie algebra $\mf{X}(M, |D|)$.
	
	Let $(M, \pi)$ be an elliptic symplectic manifold, with elliptic divisor $|D|$. Then the groups $\Diff(M, \pi)$, $\Fol(M, \pi)$ and $\Hamlocc(M, \pi)$ are regular initial Lie subgroups of $\Diff(M)$ with Lie algebras $\mf{X}_c(M, \pi)$,  $\fol_c(M, \pi)$ and $\hamlocc(M, \pi)$, respectively.
	
	If $(M, \pi)$ has non-zero elliptic residue, then $\Fol(M, \pi)$ is an open subgroup of $\Diff(M, \pi)$. If $(M, \pi)$ has zero elliptic residue, then the identity component $\Fol_{c,0}(M, \pi)$ of $\Fol(M, \pi)$ coincides with $\Hamlocc(M, \pi)$.
\end{theorem}

\begin{proof}
	Since $T_{|D|}^*M$ locally admits bases of closed sections, the following is a special case of Proposition \ref{prop:innerautomorphismsinitial} and Theorem \ref{thm:poissondivisortype}.
	The group $\Hamlocc(M, \pi)$ carries the structure of an initial Lie subgroup of $\Diff(M, \pi)$ by Theorem \ref{thm:hamlocinitial}.
	
	Finally, if the elliptic residue is non-zero, then $\Fol(M, \pi)$ coincides with the subgroup of $\Diff(M, \pi)$ that sends (the components of $D$) to itself, which is open in $\Diff(M, \pi)$. If the elliptic residue is zero, then by Lemma \ref{lem:ellipticfoliated} every foliated vector field is locally Hamiltonian, so we can equip $\Fol(M, \pi)$ with the structure of a Lie group by declaring $\Hamlocc(M, \pi)$ to be open.
\end{proof}

\subsection{Scattering-symplectic structures}
The following is a consequence of Theorem \ref{thm:scatteringintegrationlinearizable}.

\begin{theorem}
	Let $(M, Z, \omega)$ be a scattering-symplectic manifold with underlying Poisson structure $\pi$. Then $\Diff(M, \pi)$ is a regular initial Lie subgroup of $\Diff(M)$, with Lie algebra $\mf{X}_c(M, \pi)$. 
\end{theorem}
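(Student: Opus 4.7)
The plan closely mirrors the proof of Theorem \ref{thm:logsymplecticliegroup}, with the scattering-specific linearization of Proposition \ref{thm:scatteringintegrationlinearizable} replacing the log-symplectic linearization. I focus on equipping the identity component $\Diff_{c,0}(M, \pi)$ with a Lie group structure that makes it initial in $\Diff_{c,0}(M)$; the remaining path components of $\Diff(M, \pi)$ are then handled by translating this structure.

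The first step is to realize $\Diff_{c,0}(M, \pi)$ as a quotient of a group of coisotropic bisections. Since $Z$ is the locus where the symplectic rank of $\pi$ drops, every Poisson diffeomorphism preserves $Z$, and hence by Proposition \ref{prop:rescalingmorphisms} lifts uniquely to an automorphism of $T_Z M$. Let $\pi_Z \in \Gamma(\wedge^2 T_Z M)$ denote the $T_Z M$-Poisson structure produced by the scattering-symplectic form $\omega$ as in Proposition \ref{thm:scatteringintegrationlinearizable}. Because the anchor $T_Z M \to TM$ is almost injective, an automorphism of $T_Z M$ preserves $\pi_Z$ if and only if its base diffeomorphism preserves $\pi$, so $\Diff_{c,0}(M, \pi)$ is identified with $\InnAut_c(T_Z M, \pi_Z)$.

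Next, I would pick any integration $\GG \toto M$ of $T_Z M$ (for instance the holonomy groupoid, or the log-pair groupoid when $Z$ is smooth). Proposition \ref{thm:scatteringintegrationlinearizable} makes the Poisson groupoid $(\GG, \lar{\pi_Z} - \rar{\pi_Z})$ linearizable around $M$, so Theorem \ref{thm:coisotropicbisectionsliegroup} endows $\Bis(\GG, \lar{\pi_Z} - \rar{\pi_Z})$ with a regular embedded Lie subgroup structure in $\Bis(\GG)$, whose Lie algebra is $\Gamma_c(T_Z M, \pi_Z)$. Following Remark \ref{rk:anyintegration}, the groupoid anchor $\chi$ induces a surjective homomorphism from $\Bis_{c,0}(\GG, \lar{\pi_Z} - \rar{\pi_Z})$ onto $\Diff_{c,0}(M, \pi)$ with discrete kernel; the quotient is again a regular Lie group, with Lie algebra identified with $\mf{X}_c(M, \pi)$ via the anchor $\rho$. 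To upgrade this to an \emph{initial} Lie subgroup of $\Diff(M)$, I would reuse the argument of Proposition \ref{prop:innerautomorphismsinitial}: the log-tangent bundle is isotopically closed (Example \ref{ex:standardisotopicallyclosed}) and its anchor embeds $\Gamma_c(T_Z M)$ as a closed topological subspace of $\mf{X}_c(M)$, so any isotopy of Poisson diffeomorphisms smooth in $\Diff(M)$ is generated by a smooth curve of log-vector fields; Poisson-ness forces the generators into the closed subspace $\Gamma_c(T_Z M, \pi_Z)$ by Lemma \ref{lem:poissonisotopy} applied to $\pi_Z$, and regularity of $\Bis(\GG, \lar{\pi_Z} - \rar{\pi_Z})$ integrates them back to a smooth curve in $\Diff_{c,0}(M, \pi)$.

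The main obstacle is verifying cleanly the equivalence between Poisson objects on $(M, \pi)$ and Poisson objects on $(T_Z M, \pi_Z)$ — specifically, that every $\pi$-Poisson vector field on $M$ is in fact a $\pi_Z$-Poisson section of $T_Z M$, not merely a log-vector field. Tangency to $Z$ comes from preservation of rank, and compatibility of the two Poisson structures via the almost injective anchor $T_Z M \to TM$ gives the stronger claim, but this compatibility should be made precise along the degeneracy locus, where the scattering anchor has a non-trivial kernel; I expect this to require a short local-coordinate computation analogous to Lemma \ref{lem:loglocallyhamiltonianfoliated}.
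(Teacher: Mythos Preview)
Your approach is essentially the paper's own proof: identify $\Diff_{c,0}(M,\pi)$ with the coisotropic bisections of the linearizable Poisson groupoid $(\GG,\lar{\pi_Z}-\rar{\pi_Z})$ over an integration of $T_ZM$, then invoke Theorem~\ref{thm:coisotropicbisectionsliegroup} and the initiality of $\Diff(M,Z)$ in $\Diff(M)$. Your self-identified ``obstacle'' is not one: the relevant anchor is that of $T_ZM$ (where $\pi_Z$ lives), not of ${}^{sc}T_ZM$, and since $T_ZM\to TM$ is almost injective on sections, $[\tilde X,\pi_Z]=0$ in $\Gamma(\wedge^2 T_ZM)$ if and only if $[X,\pi]=0$ in $\mf{X}^2(M)$ --- exactly the argument in the proof of Theorem~\ref{thm:almostinjectivePoisson}, with no local computation needed.
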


\begin{proof}
	We equip the path component $\Diff_{c, 0}(M, \pi)$ with a Lie group structure as follows. Any Poisson vector field $X$ must be tangent to $Z$, hence it follows $\Diff_{c, 0}(M, \pi)\subset \Diff_{c, 0}(M,Z)$. Recall from the proof of Theorem \ref{thm:logsymplecticliegroup} that $\Diff_{c, 0}(M, Z)$ obtains its Lie group structure by identifying it with $\Bis_{c,0}(\GG)$, with $\GG\toto M$ the holonomy groupoid of $T_ZM$.  By Theorem \ref{thm:scatteringintegrationlinearizable}, the groupoid $\GG\toto M$ comes with a Poisson structure $\Pi$ that is linearizable around $M$, Therefore, Theorem \ref{thm:coisotropicbisectionsliegroup} says that $\Bis_{c, 0}(\GG, \Pi)$ is a regular embedded Lie subgroup of $\Bis_{c, 0}(\GG)$. Clearly, $\Bis_{c, 0}(\GG, \Pi)$ identifies with $\Diff_{c, 0}(M, \pi)$. 
\end{proof}

\subsection{Almost injective (co)symplectic Lie algebroids}  An \textit{inner $\AA$-Poisson automorphism} of a Poisson algebroid $(\AA, \pi_\AA)\Rightarrow M$ is an inner automorphism $\varphi\in \InnAut(\AA)$ that preserves the bivector $\pi_\AA$. This gives rise to the subgroup $\InnAut(\AA, \pi_\AA)$ of $\InnAut(\AA)$. Its infinitesimal generators correspond precisely to Poisson sections, which are the sections of $\AA$ closed with respect to the Lie algebroid structure on $\AA^*$.
\begin{proposition}
	Let $(\varphi_t, \Phi_t): \AA\to \AA$ be the flow of a time-dependent section $v_t\in \Gamma(\AA)$. Then $\varphi_t$ is a path in $\InnAut(\AA,\pi_\AA)$ if and only if $[\pi_\AA, v_t]=0$ for all $t$.
\end{proposition}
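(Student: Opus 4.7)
The plan is to reduce the statement to a Lie-derivative computation on multisections of $\AA$, in direct analogy with Lemma \ref{lem:poissonisotopy}. By definition, $\varphi_t$ lies in $\InnAut(\AA, \pi_\AA)$ if and only if $\Phi_t^*\pi_\AA = \pi_\AA$ as a section of $\wedge^2\AA$, for every $t$. So the task is to compute $\frac{d}{dt}\Phi_t^*\pi_\AA$ in terms of $v_t$ and read off when it vanishes.

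The key ingredient is the Cartan-type formula in the Lie algebroid setting: for any $\alpha\in \Gamma(\wedge^\bullet\AA)$,
$$\frac{d}{dt}\Phi_t^*\alpha = \Phi_t^*[v_t,\alpha],$$
where $[\cdot,\cdot]$ is the Schouten--Nijenhuis bracket on $\Gamma(\wedge^\bullet\AA)$. First treat the time-independent case: for a fixed $v\in\Gamma(\AA)$ with flow $\Phi^v_t$, the derivation $\frac{d}{dt}\big|_{t=0}(\Phi^v_t)^*$ of $\Gamma(\wedge^\bullet\AA)$ agrees with $[v,\cdot]$ on $C^\infty(M)$ (both reduce to $\rho_\AA(v)$) and on $\Gamma(\AA)$ (both recover the bracket, by the very definition of the flow of a section). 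Since both sides satisfy the graded Leibniz rule with respect to $\wedge$, they agree on all of $\Gamma(\wedge^\bullet\AA)$. The time-dependent case then follows from the cocycle identity $\Phi_{t+s} = \widetilde{\Phi}_{t,s}\circ \Phi_t$, where $\widetilde{\Phi}_{t,s}$ is the time-$s$ flow of the time-shifted section $v_{t+\cdot}$; differentiating at $s=0$ and using the time-independent formula yields the displayed identity.

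Applying this with $\alpha = \pi_\AA$ gives $\frac{d}{dt}\Phi_t^*\pi_\AA = \Phi_t^*[v_t,\pi_\AA]$. Since $\Phi_t^*$ is a bundle isomorphism, $\Phi_t^*\pi_\AA$ is constant in $t$ --- and therefore equal to $\Phi_0^*\pi_\AA = \pi_\AA$ --- if and only if $[v_t,\pi_\AA] = 0$ for all $t$. This gives both directions of the equivalence.

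The only real obstacle is checking the Cartan-type formula carefully, which is folklore but not quite algebraic formality: the rest of the argument is just differentiating a condition along a flow and using invertibility. One could alternatively integrate $\AA$ (locally, which always suffices) and transport the statement to a Lie groupoid via right-invariant extensions, using the identification $\rar{[v,\pi_\AA]} = [\rar{v},\Pi]$ underlying Proposition \ref{prop:coisotropicbisectionsliealgebra}, but the direct Leibniz-rule proof is cleaner and avoids any integrability input.
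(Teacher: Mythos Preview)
Your argument is correct. The paper actually states this proposition without proof, treating it as the direct Lie-algebroid analogue of Lemma~\ref{lem:poissonisotopy} (which is likewise stated without proof); your Cartan-formula computation $\frac{d}{dt}\Phi_t^*\pi_\AA = \Phi_t^*[v_t,\pi_\AA]$ is exactly the standard justification the paper leaves implicit, so there is nothing to compare against.
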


We can now prove the following general result. 

\begin{theorem}\label{thm:almostinjectivePoisson}
Let $(\AA,\pi_\AA)\Rightarrow M$ be an almost-injective Poisson algebroid of cosymplectic type. The group $\InnAut_c(\AA, \pi_\AA)$ is a regular embedded Lie subgroup of $\InnAut_c(\AA)$ with Lie algebra $\Gamma_c(\AA, \pi_\AA)$.

If in addition $\AA$ is isotopically closed and $\rho:\Gamma_c(\AA)\to \mf{X}_c(M)$ is initial, then $\InnAut_c(\AA,\pi_\AA)$ is an initial Lie subgroup of $\Diff(M)$. 
\end{theorem}
\begin{proof}
	The holonomy groupoid $\Hol(\AA)$ integrates $\AA$, and the Poisson structure $\lar{\pi_\AA}-\rar{\pi_\AA}$ on $\Hol(\AA)$ is linearizable by Theorem \ref{thm:cosymplecticpairgroupoidlinearizable}.
	
	If $\sigma\in \Bis(\Hol(\AA))$, then its image in $(M\times M, \pi\times (-\pi))$ is coisotropic precisely when $\sigma$ is coisotropic in $(\Hol(\AA), \lar{\pi_\AA}-\rar{\pi_\AA})$, because $\AA$ is almost-injective. This implies that $t\circ\sigma$ induces an $\AA$-Poisson isomorphism.
	
	Under the additional assumptions, it becomes initial by Proposition \ref{prop:innerautomorphismsinitial}. 
\end{proof}

\subsection{Poisson structures of cosymplectic type} In this section we consider the symmetry group of Poisson stuctures of cosymplectic type on the tangent bundle of a manifold. 

\begin{lemma}
Let $(M, \pi)$ be a Poisson manifold of cosymplectic type, with underlying foliation $\FF$. Then $\FF$ has trivial holonomy. 
\end{lemma}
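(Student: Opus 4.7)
The plan is to use the Reeb vector fields of the defining cosymplectic structure to construct a local $\RR^k$-action whose orbits give flat transversals to $\FF$; this will show that every loop in a leaf lifts to a loop on each nearby parallel leaf, which is exactly the triviality of holonomy. Fix a cosymplectic structure $(\alpha_1, \dots, \alpha_k, \omega)$ on $TM$ inducing $\pi$, so $\FF = \bigcap_i \ker \alpha_i$ and $TM = \FF \oplus \ker\omega$. Since the fibers of $\ker\omega$ have dimension $k$ and $\FF \cap \ker\omega = 0$, the evaluation $v \mapsto (\alpha_1(v), \dots, \alpha_k(v))$ is a fiberwise isomorphism on $\ker\omega$, producing unique Reeb vector fields $R_1, \dots, R_k \in \Gamma(\ker\omega)$ with $\alpha_j(R_i) = \delta_{ij}$.

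The first key step will be that the Reeb fields pairwise commute. Using $d\alpha_l = 0$, Cartan's formula gives $\alpha_l([R_i, R_j]) = R_i(\alpha_l(R_j)) - R_j(\alpha_l(R_i)) - d\alpha_l(R_i, R_j) = 0$, so $[R_i, R_j] \in \FF$. Dually, from $\iota_{R_j}\omega = 0$ and $\LL_{R_i}\omega = d\iota_{R_i}\omega + \iota_{R_i} d\omega = 0$ one gets $\iota_{[R_i, R_j]}\omega = 0$, so $[R_i, R_j] \in \ker\omega$. Combined with $\FF \cap \ker\omega = 0$, this forces $[R_i, R_j] = 0$, and the Reeb flows assemble into a local $\RR^k$-action $\Phi\colon \RR^k \times M \to M$. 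Because $\LL_{R_i}\alpha_j = d(\iota_{R_i}\alpha_j) + \iota_{R_i}d\alpha_j = 0$, each time-$t$ diffeomorphism $\Phi(t, \cdot)$ preserves every $\alpha_j$ and hence sends leaves of $\FF$ to leaves of $\FF$; since its orbits span $\ker\omega$, they are transverse to $\FF$.

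The conclusion then follows by a direct lift argument. Fix a leaf $L$, a base point $p \in L$, and a loop $\gamma\colon [0, 1] \to L$ at $p$; take $T := \{\Phi(t, p) : t \in \RR^k \text{ small}\}$ as the transversal at $p$. For each $q = \Phi(t, p) \in T$, the path $\tilde\gamma(s) := \Phi(t, \gamma(s))$ lies in the leaf $\Phi(t, \cdot)(L)$ through $q$, starts at $q$, and ends at $\Phi(t, \gamma(1)) = \Phi(t, p) = q$; hence $\tilde\gamma$ is a leafwise lift of $\gamma$ returning to its starting point, so the holonomy germ along $\gamma$ fixes $T$ pointwise. I expect the only real obstacle to be bookkeeping: verifying that this lift agrees with the one from the standard foliated-chart definition of holonomy when $\gamma$ cannot be placed inside a single foliated chart. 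But this reduces to the observation that the transversals cut out by the $\RR^k$-orbits are compatible across overlapping foliated charts, so concatenation of local lifts yields the desired global lift.
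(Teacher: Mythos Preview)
Your proof is correct and follows essentially the same approach as the paper: use the commuting Reeb vector fields of a defining cosymplectic structure to build a local $\RR^k$-action transverse to $\FF$, and observe that pushing a leafwise loop by this action yields a leafwise loop, so the holonomy is trivial. Your version is in fact more detailed than the paper's, which simply asserts that the Reeb vector fields commute and preserve $\FF$, whereas you verify this explicitly via $[R_i,R_j]\in\FF\cap\ker\omega=0$ and $\LL_{R_i}\alpha_j=0$.
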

\begin{proof}
	Choose a cosymplectic structure $(\alpha_1,\dots, \alpha_k, \omega)$ inducing $\pi$, The Reeb vector fields $R_1,\dots, R_k$ are pairwise commuting and transverse to $\FF$, while also preserving $\FF$. Let $L$ be a leaf of $\FF$ and $x\in L$ and $\gamma:[0,1]\to L$ a loop based at $x$. The flows $\varphi^i_{\epsilon}$ of $R_i$ induce an embedding $(\epsilon_1, \dots, \epsilon_k)\mapsto \varphi^1_{\epsilon_1}\circ \dots \circ \varphi^k_{\epsilon_k}(x)$ of a small disc through $x$ transverse to $\FF$. Since the image of $\gamma$ is compact, for $\epsilon_i$ small enough, $\varphi_{\epsilon_1}^1\circ \dots\circ \varphi^k_{\epsilon_k}\circ\gamma$ is a loop tangent to $\FF$ starting at $\varphi_{\epsilon_1}^1\circ \dots\circ \varphi^k_{\epsilon_k}(x)$. Clearly, at $t=1$, it has returned to its starting point. 
\end{proof}
It follows that $\Hol(\FF)$ coincides with the relation groupoid \[
\operatorname{Rel}(\FF)=\{(x, y)\in M\times M: \mbox{$x$ and $y$ belong to the same leaf}\}.
\]
We aim to strengthen Theorem \ref{thm:almostinjectivePoisson} in the case that $\AA=TM$. 
\begin{theorem}\label{thm:cosymplecticliegroup}
	Let $(M, \pi)$ be a Poisson manifold of cosymplectic type with underlying foliation $\FF$. Then $\Diff(M, \pi)$ is an embedded regular Lie subgroup of $\Diff(M)$. Furthermore, $\Fol(M, \pi)$ is an initial Lie subgroup of $\Diff(M, \pi)$.
	
	If the relation groupoid $\operatorname{Rel}(\FF)$ is embedded in $M\times M$, then $\Fol(M, \pi)$ is embedded in $\Diff(M, \pi)$.  
\end{theorem}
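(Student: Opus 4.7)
The plan is to apply Theorem~\ref{thm:almostinjectivePoisson} twice, mirroring the strategy of Theorem~\ref{thm:logsymplecticliegroup}: once with $\AA=TM$ to obtain the Lie group structure on $\Diff(M,\pi)$, and once with the foliation Lie algebroid $\AA=\FF\Rightarrow M$ to obtain that on $\Fol(M,\pi)$.

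For the first assertion, I view $\pi$ as a $TM$-Poisson structure, which by hypothesis is of cosymplectic type. The tangent algebroid is trivially almost injective, its anchor is the identity (hence both initial and an embedding), and it is tautologically isotopically closed since every compactly supported vector field is a section of $TM$. Theorem~\ref{thm:almostinjectivePoisson} therefore furnishes an embedded regular Lie subgroup structure on $\InnAut_c(TM,\pi)=\Diff_{c,0}(M,\pi)$ inside $\InnAut_c(TM)=\Diff_{c,0}(M)$, with Lie algebra $\mf{X}_c(M,\pi)$. Left-translating the resulting chart structure along representatives of the remaining path components of $\Diff(M,\pi)$ propagates it to an embedded regular Lie subgroup structure on the full group inside $\Diff(M)$.

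For the second assertion, the preceding lemma yields $\Hol(\FF)=\operatorname{Rel}(\FF)$, so $\FF$ is an integrable almost injective Lie algebroid. The restriction $\omega_\FF$ makes $\FF$ into a symplectic Lie algebroid, so the induced non-degenerate $\FF$-Poisson structure $\pi_\FF$ is trivially of cosymplectic type (with $k=0$). Moreover, $\FF$ is isotopically closed by the first example following Proposition~\ref{prop:innerautomorphismsinitial}, and the anchor $\rho:\Gamma_c(\FF)\hookrightarrow\mf{X}_c(M)$ is an embedding of topological vector spaces as the inclusion of a closed subspace. Under the identification $\InnAut_c(\FF,\pi_\FF)=\Fol_{c,0}(M,\pi)$, Theorem~\ref{thm:almostinjectivePoisson} equips the identity component of $\Fol(M,\pi)$ with the structure of an initial Lie subgroup of $\Diff(M)$. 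Since $\Diff(M,\pi)$ is already embedded in $\Diff(M)$ by the first part, initiality descends to $\Diff(M,\pi)$, and left-translation propagates the structure to all of $\Fol(M,\pi)$.

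For the third assertion, when $\operatorname{Rel}(\FF)\subset M\times M$ is an embedded Lie subgroupoid, the inclusion $\Bis(\operatorname{Rel}(\FF))\hookrightarrow\Bis(\Pair(M))=\Diff(M)$ becomes an embedding of Lie groups: the chart construction of Section~\ref{sec:bisectionsliegroup} produces charts on $\Bis(\operatorname{Rel}(\FF))$ modelled on the closed subspace $\Gamma_c(\FF)\subset\mf{X}_c(M)$, by restricting a tubular neighbourhood of a bisection graph in $M\times M$ to one within $\operatorname{Rel}(\FF)$. Intersecting with the embedded subgroup $\Diff(M,\pi)$ then identifies $\Fol(M,\pi)$ as an embedded Lie subgroup of $\Diff(M,\pi)$. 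The main technical point I anticipate is this coherent restriction of tubular neighbourhoods: one needs that a tubular neighbourhood of the graph of a foliated bisection inside $\operatorname{Rel}(\FF)$ can be extended to one in $M\times M$ so that the two chart constructions agree on their overlap. This should follow from standard compatibility arguments for embedded submanifolds together with averaging over a normal complement, but in the convenient setting with compact supports it is the step that deserves the most care.
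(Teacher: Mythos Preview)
Your treatment of the first two assertions is correct and coincides with the paper's: it too invokes Theorem~\ref{thm:almostinjectivePoisson} with $\AA=TM$ for the embedded structure on $\Diff(M,\pi)$ and with $\AA=\FF$ for the initial structure on $\Fol(M,\pi)$.

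For the third assertion, however, there is a genuine gap. The step ``intersecting with the embedded subgroup $\Diff(M,\pi)$'' is not justified: in infinite dimensions the intersection of two embedded submanifolds need not be embedded, and there is no transversality available here. To conclude that $\Fol(M,\pi)$ is embedded in $\Diff(M,\pi)$ you need a \emph{single} chart of $\Diff(M)$ that simultaneously flattens both $\Diff(M,\pi)$ and $\Bis(\operatorname{Rel}(\FF))$; equivalently, a tubular neighbourhood of the diagonal in $M\times M$ that is at once a Poisson linearization of $(M\times M,\lar{\pi}-\rar{\pi})$ \emph{and} restricts to a tubular neighbourhood inside $\operatorname{Rel}(\FF)$. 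The technical point you flag---extending a tubular neighbourhood from $\operatorname{Rel}(\FF)$ to $M\times M$---is not the crux; the crux is arranging for such an extension to also linearize the Poisson structure, and a generic Poisson linearization has no reason to respect $\operatorname{Rel}(\FF)$.

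The paper resolves this by tracking the full cosymplectic structure, not just $\pi$, through the linearization. Lemma~\ref{lem:diagonalcosymplecticstructure} identifies the restriction of $\rar{\alpha_i}\pm\lar{\alpha_i}$ and $\rar{\omega}-\lar{\omega}$ along the diagonal, and the cosymplectic Moser lemma (\cite{smilde2021linearization}, Lemma~3.1) then produces a Poisson linearization $\varphi$ that additionally pulls $\tfrac{1}{2}(\rar{\alpha_i}-\lar{\alpha_i})$ back to $d\ell_{R_i}$. Since by Lemma~\ref{lem:relationgroupoidleaf} the relation groupoid is the leaf of $\cap_i\ker(\rar{\alpha_i}-\lar{\alpha_i})$ through the diagonal, under $\varphi$ it corresponds (after shrinking) to the locus $\{f=0\}$ in the splitting $T^*M\cong\FF^*\times\RR^k$. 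The resulting chart of $\Diff(M,\pi)$ then carries $\Fol(M,\pi)$ onto the closed subspace $\Omega^1_{\mathrm{cl},c}(\FF)\times\{0\}$, exhibiting it as embedded.
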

Note that the relation groupoid $\operatorname{Rel}(\FF)$ is embedded in $M\times M$ when every symplectic leaf of $(M,\pi)$ is embedded.

Let $(M, \pi)$ be a Poisson manifold of cosymplectic type, and choose a cosymplectic structure $(\alpha_1, \dots, \alpha_k, \omega)$ inducing $\pi$. To obtain charts of $\Diff(M, \pi)$ adapted to $\Fol(M, \pi)$, it is crucial to keep track of the one-forms that are part of the cosymplectic structure in the linearization of the pair Poisson groupoid $(M\times M, \lar{\pi}-\rar{\pi})$ (this information is lost in the proof of Theorem \ref{thm:cosymplecticpairgroupoidlinearizable}), because of the following observation.

\begin{lemma}\label{lem:relationgroupoidleaf}
Suppose that $M$ is connected. Let $\alpha_1, \dots, \alpha_k$ be everywhere independent closed one-forms on $M$ and set $\FF=\cap_i \ker \alpha_i$. The relation groupoid $\operatorname{Rel}(\FF)$ is the leaf of the foliation $\cap_{i}\ker(\rar{\alpha_i}-\lar{\alpha_i})$ on $M\times M$ through the diagonal. 
\end{lemma}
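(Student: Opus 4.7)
The plan is to realize $\tilde{\mc{F}} := \bigcap_i \ker(\rar{\alpha_i} - \lar{\alpha_i})$ as a regular foliation of codimension $k$ on $M\times M$, observe that the diagonal $\Delta_M$ is tangent to it (hence contained in a single leaf $L$ by connectedness of $M$), and then establish the set-theoretic equality $L = \operatorname{Rel}(\mc{F})$ by two inclusions.

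The setup is immediate: the forms $\tilde{\alpha}_i := \rar{\alpha_i} - \lar{\alpha_i}$ are closed (pullbacks of closed forms) and everywhere independent on $M\times M$ (their target-fibre parts are already independent), so $\tilde{\mc{F}}$ is a regular foliation of codimension $k$. For any $(v,v)\in T_{(x,x)}\Delta_M$ one has $\tilde{\alpha}_i((v,v)) = \alpha_i(v) - \alpha_i(v) = 0$, so $\Delta_M$ is tangent to $\tilde{\mc{F}}$ and, being connected, lies inside a single leaf $L$.

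The inclusion $\operatorname{Rel}(\mc{F}) \subseteq L$ is the easy direction. Given $(x,y) \in \operatorname{Rel}(\mc{F})$, pick a path $\eta : [0,1] \to M$ tangent to $\mc{F}$ with $\eta(0)=x$, $\eta(1)=y$. Then $t \mapsto (x,\eta(t))$ has velocity $(0,\dot\eta)$, and $\tilde{\alpha}_i((0,\dot\eta)) = -\alpha_i(\dot\eta) = 0$ since $\eta$ is tangent to $\mc{F}$. This path stays tangent to $\tilde{\mc{F}}$ and joins $(x,x) \in \Delta_M \subset L$ to $(x,y)$, so $(x,y) \in L$.

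For the opposite inclusion $L \subseteq \operatorname{Rel}(\mc{F})$, take a path $\gamma = (\gamma_1,\gamma_2) : [0,1] \to L$ from some $(z,z) \in \Delta_M$ to $(x,y)$. Tangency to $\tilde{\mc{F}}$ forces $\alpha_i(\dot\gamma_1) = \alpha_i(\dot\gamma_2)$ pointwise, so the concatenation $\delta := \gamma_1^{-1} \ast \gamma_2$, a path in $M$ from $x$ to $y$, satisfies $\int_\delta \alpha_i = -\int_{\gamma_1}\alpha_i + \int_{\gamma_2}\alpha_i = 0$ for every $i$. The main obstacle is to deduce that $x$ and $y$ lie on the same leaf of $\mc{F}$ from the vanishing of these integrals, which is not immediate because the $\alpha_i$ can have nontrivial periods. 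I would resolve this by passing to the universal cover $p : \tilde M \to M$: the pullbacks $p^*\alpha_i$ are exact, say $p^*\alpha_i = d\tilde f_i$, and the leaves of $p^{-1}\mc{F}$ are level sets of $\tilde f = (\tilde f_1, \dots, \tilde f_k)$. Lifting $\delta$ to $\tilde{\delta}$ with endpoints $\tilde x, \tilde y$, one computes $\tilde f_i(\tilde y) - \tilde f_i(\tilde x) = \int_{\tilde\delta} d\tilde f_i = \int_\delta \alpha_i = 0$, so $\tilde x$ and $\tilde y$ sit on a common leaf of $p^{-1}\mc{F}$. Projecting down, $x$ and $y$ lie on a common leaf of $\mc{F}$, hence $(x,y) \in \operatorname{Rel}(\mc{F})$.
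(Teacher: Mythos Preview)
The paper states this lemma without proof. Your argument for $\operatorname{Rel}(\FF)\subseteq L$ is correct, but the reverse inclusion has a real gap: from $\tilde f(\tilde x)=\tilde f(\tilde y)$ you conclude that $\tilde x$ and $\tilde y$ lie on the same leaf of $p^{-1}\FF$, implicitly assuming that the leaves of $p^{-1}\FF$ coincide with the level sets of $\tilde f$. They are only the \emph{connected components} of those level sets, and level sets of a submersion from a simply connected manifold need not be connected. Concretely, take $M=\{(x,y,z)\in\RR^3: y>-x^2\}$ (contractible, so $\tilde M=M$) with $\alpha_1=dy$ and $\tilde f_1=y$; then $\tilde f_1^{-1}(0)=\{(x,0,z):x\neq 0\}$ has two components. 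In fact this same example shows the lemma as literally stated is false: $(1,0,0)$ and $(-1,0,0)$ lie on distinct leaves of $\FF$, yet $((1,0,0),(-1,0,0))$ is joined to the diagonal point $((0,1,0),(0,1,0))$ by the path $t\mapsto((t,1-t,0),(-t,1-t,0))$, which is tangent to $\cap_i\ker(\rar{\alpha_i}-\lar{\alpha_i})$ since both components have $y$-velocity $-1$. So your gap is not a mere oversight; the statement needs an additional hypothesis (for instance, connectedness of the fibres of $\tilde f$ on the universal cover).

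What does survive is the inclusion $\operatorname{Rel}(\FF)\subseteq L$ that you proved, together with the observation that $\operatorname{Rel}(\FF)$ is a full-dimensional integral submanifold of the foliation on $M\times M$ containing $\Delta_M$, hence open in $L$. In the paper the lemma is only invoked in the proof of the theorem on cosymplectic Poisson manifolds, \emph{after} assuming that $\operatorname{Rel}(\FF)$ is embedded in $M\times M$ and only to identify $\operatorname{Rel}(\FF)$ with a level set inside a tubular neighbourhood of the diagonal; for that local purpose the open inclusion you established is already enough.
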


To keep track of the cosymplectic structure, we identify $N\Delta=\{(-v, v): v\in TM\} \subset TM\times TM$ and choose a tubular neighbourhood $\psi:TM\to U\subset M\times M$ of $\Delta$ that respects the splittings along $M$, meaning that $T\psi$ sends $(v, w)\in T_x(TM)\cong T_xM\oplus T_xM$ to $(v-w, v+w)\in T_{(x, x)}(M\times M)$. We let $p:T^*M\to M$ be the bundle projection, and denote by $\ell_{R_i}:T^*M\to \RR$ the linear function associated to the Reeb vector field $R_i$: $\ell_{R_i}(\alpha)=\alpha(R_i(x))$ for $\alpha\in T^*_xM$.

\begin{lemma}\label{lem:diagonalcosymplecticstructure}
	Let $\flat:TM\to T^*M$ be the flat map induced by the cosymplectic structure $(\alpha_1, \dots, \alpha_k, \omega)$, with inverse $\sharp=\flat^{-1}$. Then the following equalities hold along $M$:
	\[
		p^*\alpha_i\vert_M=\sharp^*\psi^*\left(\frac{1}{2}\left(\rar{\alpha_i}+\lar{\alpha_i}\right)\right)\big\vert_M, \quad
		d\ell_{R_i}\big\vert_M=\sharp^*\psi^*\left(\frac{1}{2}\left(\rar{\alpha_i}-\lar{\alpha_i}\right)\right)\big\vert_M, 
		\]
		\[
		\left(\omega_{\can}-\sum_{i=1}^k d(\ell_{R_i} p^*\alpha_i) \right)\big\vert_M=\sharp^*\psi^*\left(\frac{1}{2} \left(\rar{\omega}-\lar{\omega}\right)\right)\big\vert_M. 
	\]
\end{lemma}
\begin{proof}
	Since the map $\sharp$ is a vector bundle map, its derivative along the zero section is given in terms of the natural splittings by $T\sharp(v, w)=(v, \sharp(w))$.
	
	For the first two equalities, we have on one hand, for $(v, \beta)\in T_x(T^*M)$:
	\[
	p^*\alpha_i(v, \beta)=\alpha_i(v), \quad d\ell_{R_i}(v, \beta)=\beta(R_i),
	\]
	while the right side evaluates to:
	\begin{align*}
		\sharp^*\psi^*\left(\frac{1}{2}\left(\rar{\alpha_i}+\lar{\alpha_i}\right)\right)(v, \beta)&=\frac{1}{2}\left(\rar{\alpha_i}+\lar{\alpha_i}\right)(v-\sharp(\beta), v+\sharp(\beta))=\alpha_i(v), 
	\end{align*}
and the second equality goes similar. For the third one, let $(v, \beta), (w, \gamma)\in T_x(T^*M)$. Then,
\begin{align*}
	\sharp^*\psi^*\left(\frac{1}{2}\left(\rar{\omega}-\lar{\omega}\right)\right)\left( ( v, \beta), (w, \gamma)\right) &=\frac{1}{2}\left(\rar{\omega}-\lar{\omega}\right)\left( T\psi(v, \sharp(\beta)), T\psi (w, \sharp(\gamma))\right)\\
	&=\omega(\sharp(\beta), w)+\omega(v, \sharp(\gamma))\\
	&=\beta(w)-\gamma(v)-\left(\sum_i \beta(R_i)\alpha_i(w)-\gamma(R_i)\alpha_i(v)\right), 
\end{align*}
where we used that $\iota_{\sharp(\beta)}\omega=\beta-\sum_i \beta(R_i)\alpha_i$.
On the other hand,
\begin{align*}
	\omega_{\can}((v, \beta), (w, \gamma))&=\beta(w)-\gamma(v)\\
	d\ell_{R_i}\wedge p^*\alpha_i\left( (v, \beta), (w, \gamma)\right)&=\beta(R_i)\alpha_i(w)-\gamma(R_i)\alpha_i(v). 
\end{align*}
The third equality now follows easily.
\end{proof}
\begin{remark}\label{rk:cotangentcosymplectic}
	The cosymplectic structure $(p^*\alpha_1, d\ell_{R_1}, \dots p^*\alpha_k, d\ell_{R_k}, \omega_{\can}-\sum_i d(\ell_{R_i}p^*\alpha_i))$ induces a linear Poisson structure on $T^*M$, which is Poisson diffeomorphic via the flat map to the linear Poisson structure on $TM$ dual to the cotangent algebroid. Using the cosymplectic structure to write $T^*M=\FF^*\times \RR^k$, we see that a section $(\eta, f)\in \Gamma(\FF^*\times \RR^k)$ has a coisotropic image in $T^*M$ if and only if $d_\FF f=0$ and $d_\FF\eta=0$. This is equivalent to $\sharp(\eta, f)$ being a Poisson vector field.  
\end{remark}
\begin{proof}[Proof of Theorem \ref{thm:cosymplecticliegroup}]
	The first part is just Theorem \ref{thm:almostinjectivePoisson} in the case that $\AA=TM$. Alternatively, the Poisson pair groupoid is linearizable by Theorem \ref{thm:cosymplecticpairgroupoidlinearizable}, and so by Theorem \ref{thm:coisotropicbisectionsliegroup} its subgroup of coisotropic bisections is embedded. Its group of foliated diffeomorphisms is an initial Lie subgroup by Theorem \ref{thm:almostinjectivePoisson}, taking $\AA=\FF$. 
	
	Suppose now that $\operatorname{Rel}(\FF)$ is embedded in $M\times M$. According to Lemma \ref{lem:diagonalcosymplecticstructure} and the Moser lemma for cosymplectic Lie algebroids (\cite{smilde2021linearization}, Lemma 3.1), we can find
	\begin{itemize}[noitemsep, topsep=0em]
		\item open neighbourhoods $U\subset T^*M$ of $M$ and $V\subset M\times M$ of the diagonal;
		\item a Poisson diffeomorphism $\varphi:U\to V$ such that $\varphi^*\left( \frac{1}{2} \left(\rar{\alpha_i}+\lar{\alpha_i}\right)\right)=p^*_*\alpha_i$ and $\varphi^*\left( \frac{1}{2}\left( \rar{\alpha_i}-\lar{\alpha_i}\right)\right) = d \ell_{R_i}$. 
	\end{itemize} 
Identify $T^*M=\FF^*\times \RR^k$ as in Remark \ref{rk:cotangentcosymplectic}. Since $\operatorname{Rel}(\FF)$ is embedded, and $\varphi^*\left(\frac{1}{2}\left( \rar{\alpha_i}-\lar{\alpha_i}\right)\right)=d\ell_{R_i}$, we can assume by Lemma \ref{lem:relationgroupoidleaf}, after possibly shrinking $U$, that $\varphi(\eta, f)\in \operatorname{Rel}(\FF)$ if and only if $f=0$. In this tubular neighbourhood, the $C^1$-open
\[
\mc{U}=\left\{ (\eta, f)\in \Gamma_c(\FF^*\times \RR^k): \substack{ \mbox{$d_\FF \eta=0$, $d_\FF f=0$, $\mathrm{im}(\eta, f)\subset U$, }\\
	\mbox{ $s\circ \varphi\circ (\eta, f)$ and $t\circ \varphi\circ (\eta, f)$ are diffeo's} } \right\}
\]
is the image of some chart $(\VV, \Phi)$ if $\Diff(M, \pi)$ as described in Section \ref{sec:bisectionsliegroup}. Now, a section $(\eta, f)$ has image in $\operatorname{Rel}(\FF)$ if and only if $f=0$. It follows that $\Phi$ restricts to $\Fol(M, \pi)$ as 
\[
\Fol(M, \pi)\cap \mc{V} \xrightarrow{\Phi} \Omega^1_{\mathrm{cl}, c}(\FF)\cap \mc{U}. 
\]
Hence, the chart $(\mc{V}, \Phi)$ is adapted to $\Fol(M, \pi)$. 
\end{proof}

\begin{remark} Without the presence of the 2-form in a cosymplectic structure, our approach remains valid. Indeed, let $\alpha\in \Omega^1(M)$ be a closed nowhere vanishing one-form, and set $\FF=\ker \alpha$. A bisection $\sigma:M\to M\times M$ corresponds to a one-form preserving diffeomorphism if and only if $\sigma^*(\rar{\alpha})=\sigma^*(\lar{\alpha})=\alpha$. Furthermore, it corresponds to a foliation preserving diffeomorphism if and only if it is a foliated map $\sigma:(M, \FF)\to (M\times M, \FF\times \FF)$. The Moser lemma (\cite{smilde2021linearization}, Lemma 3.1) can be rephrased without the two-form. Using these observations, we can argue similar as in the proof of Theorem \ref{thm:cosymplecticliegroup} to obtain charts of $\Diff(M)$ adapted to $\Diff(M, \alpha)$ and $\Diff(M, \FF)$. The result is the following theorem. 
\begin{theorem}
	Let $\alpha_1, \dots, \alpha_k\in \Omega^1(M)$ be closed, everywhere independent one-forms and set $\FF=\cap_{i=1}^k\ker\alpha_i$. Then $\Diff(M, \FF)$ and $\Diff(M, \alpha_1, \dots, \alpha_k)$ are regular embedded Lie subgroups of $\Diff(M)$, with Lie algebras \begin{align*}
	\mf{X}_c(M, \FF)&=\{X\in \mf{X}_c(M) : [X, \Gamma(\FF)]\subset \Gamma(\FF)\} \mbox{ and }\\\mf{X}_c(M, \alpha_1, \dots, \alpha_k)&=\{ X\in \mf{X}_c(M): \LL_X(\alpha_1)=\dots=\LL_X(\alpha_k)=0\},
	\end{align*} respectively. 
\end{theorem}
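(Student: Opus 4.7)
Following the strategy of Theorem~\ref{thm:cosymplecticliegroup}, the plan is to construct charts of $\Diff_c(M) = \Bis_c(M \times M)$ around the identity that are simultaneously adapted to both subgroups. First, I choose transverse vector fields $R_1, \dots, R_k \in \mf{X}(M)$ satisfying $\alpha_i(R_j) = \delta_{ij}$, inducing a splitting $TM = \FF \oplus \langle R_1, \dots, R_k\rangle$. Since $\alpha_j$ is closed, $\LL_{R_i}\alpha_j = d(\iota_{R_i}\alpha_j) = d(\delta_{ij}) = 0$, so each $R_i$ preserves $\FF$.

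Next, I pick a tubular neighborhood $\varphi \colon U \subset TM \to V \subset M \times M$ of the diagonal satisfying $s \circ \varphi = p$, so that $\varphi^*(\lar{\alpha_i}) = p^*\alpha_i$ automatically. A Moser-type argument along the lines of Lemma~3.1 in \cite{smilde2021linearization}---rephrased without the 2-form, as suggested in the remark preceding the theorem---adjusts $\varphi$ so that additionally
\[
\varphi^*(\rar{\alpha_i}) - \varphi^*(\lar{\alpha_i}) = d\tilde{f}_i, \qquad i = 1, \dots, k,
\]
where $\tilde{f}_i \colon TM \to \RR$ is the fiberwise-linear function $v \mapsto \alpha_i(v)$. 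This is possible because the closed 1-forms $\varphi_0^*(\rar{\alpha_i} - \lar{\alpha_i})$ vanish along $\Delta$ and have the same first-order behavior along $\Delta$ as $d\tilde{f}_i$ (the analogue of Lemma~\ref{lem:diagonalcosymplecticstructure} without the symplectic part), so the standard isotopy method straightens them.

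Using $\varphi$, I build a chart $(\mc{V}, \Phi)$ around the identity modelled on $\Gamma_c(TM) \cong \Gamma_c(\FF) \oplus C^\infty_c(M,\RR^k)$ as in Section~\ref{sec:bisectionsliegroup}. For $v = X_\FF + \sum_j g_j R_j$ corresponding to $\sigma \in \mc{V}$ and $f = t \circ \sigma$, I compute $f^*\alpha_i - \alpha_i = \sigma^*(\rar{\alpha_i} - \lar{\alpha_i}) = dg_i$ with $g_i = \alpha_i(v)$. Form-preservation $f^*\alpha_i = \alpha_i$ is $dg_i = 0$, which under compact support forces $g_i = 0$, cutting out the closed subspace $\Gamma_c(\FF) \cap \Phi(\mc{V})$. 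Foliation-preservation $(f^*\alpha_i)|_\FF = 0$, equivalent to $dg_i|_\FF = 0$ (since $\alpha_i|_\FF = 0$), is $d_\FF g_i = 0$, cutting out the closed subspace of sections with leafwise-constant $g_j$. Both conditions give embedded Lie subgroups of $\Diff(M)$.

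For the Lie algebras: any $X = X_\FF + \sum_j g_j R_j \in \mf{X}_c(M)$ has $\LL_X \alpha_i = d(\alpha_i(X)) = dg_i$, vanishing iff $g_i$ is locally constant, hence zero under compact support, so $\mf{X}_c(M, \alpha_1, \dots, \alpha_k) = \Gamma_c(\FF)$. For the foliation case, $[X, Y] = [X_\FF, Y] + \sum_j g_j[R_j, Y] - \sum_j(\LL_Y g_j) R_j$ for $Y \in \Gamma(\FF)$ lies in $\Gamma(\FF)$ iff $d_\FF g_j = 0$ (using $[R_j, \Gamma(\FF)] \subset \Gamma(\FF)$), matching the chart description. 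Regularity of $\Diff(M, \alpha_1, \dots, \alpha_k)$ follows from Lemma~\ref{lem:subgroupregular} via the smooth, left-coset-invariant map $\varphi \mapsto (\varphi^*\alpha_i - \alpha_i)_i$; for $\Diff(M, \FF)$ an analogous argument applies. The main obstacle is the Moser step in the absence of a 2-form: without non-degeneracy to invert, one must exploit the transverse independence of the $\alpha_i$'s directly, solving for a vertical straightening vector field iteratively by order of vanishing along $\Delta$.
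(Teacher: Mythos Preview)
Your approach is essentially the paper's: linearise the pair groupoid via the Moser argument of \cite{smilde2021linearization} rephrased without the two-form, then read off both subgroups as closed linear subspaces in the resulting chart, exactly as sketched in the remark preceding the theorem.

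Two small corrections are worth noting. First, the model space for $\Diff(M,\alpha_1,\dots,\alpha_k)$ is $\{v\in\Gamma_c(TM):d(\alpha_i(v))=0\}$, not $\Gamma_c(\FF)$: when $M$ has compact components, $dg_i=0$ with $g_i$ compactly supported only forces $g_i$ locally constant, so the two spaces differ; this is harmless for the theorem since the correct space is still closed and coincides with the stated Lie algebra $\mf{X}_c(M,\alpha_1,\dots,\alpha_k)$. Second, for regularity of $\Diff(M,\FF)$ the ``analogous'' application of Lemma~\ref{lem:subgroupregular} is not immediate: the natural candidate $\varphi\mapsto(\varphi^*\alpha_i|_\FF)_i$ is \emph{not} constant on left cosets $K\varphi$, because for $k\in\Diff(M,\FF)$ one only has $k^*\alpha_i=\sum_j a_{ij}\alpha_j$ with $a_{ij}\in C^\infty(M)$, so $(k\varphi)^*\alpha_i|_\FF=\sum_j(\varphi^*a_{ij})\,\varphi^*\alpha_j|_\FF$. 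Regularity does follow, however, directly from the facts that $\Diff(M,\FF)$ is embedded (hence initial) in the regular group $\Diff(M)$ and that curves in $\mf{X}_c(M,\FF)$ integrate to isotopies in $\Diff(M,\FF)$, so $\operatorname{evol}^r_{\Diff(M)}$ restricts smoothly.
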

\end{remark}

\appendix
\section{Linearization of Poisson groupoids over scattering-symplectic manifolds}\label{app:scattering}
Let $(M, Z, \omega)$ be a scattering-symplectic manifold, with underling Poisson structure $\pi$. If $\GG_{sc}\toto M$ is an integration of the scattering-tangent bundle ${}^{sc}T_ZM$, then it comes with a Poisson structure that is automatically linearizable by Theorem \ref{thm:dualintegrationlinearizable}, as the scattering-symplectic structure provides a Lie algebroid isomorphism ${}^{sc}T^*_ZM\cong {}^{sc}T_ZM$. However, from the perspective of the Poisson diffeomorphism group, the scattering algebroid is not the right one to consider, because the Poisson vector fields are sections of the log-tangent bundle, and not necessarily the scattering-tangent bundle.

Denote by $\pi_Z$ the Poisson structure on the log-tangent bundle $T_ZM$ induced by $\omega$:
\[
\begin{tikzcd}
	{}^{sc}T^*_Z M & {}^{sc} T_ZM \arrow[l, "\omega^\flat"'] \arrow[d]\\
	T^*_ZM \arrow[u] \arrow[r, "\pi_Z^\sharp"] & T_ZM.
\end{tikzcd}
\]

The section $\pi_Z$ degenerates over $Z$, so it is not symplectic anymore, and thus Theorem \ref{thm:dualintegrationlinearizable} does not apply to integrations of the bialgebroid $(T_ZM, T^*_ZM)$. Yet, those integrations are linearizable, as we will show in this section.
\begin{theorem}\label{thm:scatteringintegrationlinearizable}
	Let $(M, Z, \omega)$ be a scattering-symplectic manifold, with underlying $T_ZM$-Poisson structure $\pi_Z$. Let $\GG\toto M$ be an integration of the log-tangent bundle $T_ZM$. Then the Poisson groupoid $(\GG, \lar{\pi_Z}-\rar{\pi_Z})\toto M$ is linearizable.
\end{theorem}
\subsection{Linearization of scattering-symplectic structures}\label{sec:scatteringlagrangian} Let $(M, Z, \omega)$ be a scattering-symplectic manifold, with underlying Poisson structure $\pi$, and $i:L\hookrightarrow Z$ a Lagrangian submanifold transverse to $Z$. Certainly, the Poisson structure is linearizable around $L\setminus Z$ in $M\setminus Z$, by Weinstein's Lagrangian neighbourhood theorem. Moreover, the scattering symplectic structure induces a contact structure on $Z$ \cite{lanius2020} which is linearizable around $\tilde{Z}=Z\cap L$ (which is Legendrian in $Z$) by the Legendrian neighbourhood theorem in contact geometry. This gives an intuition to why the Poisson structure should be linearizable around $L$.

The first step in the linearization is a Moser lemma adapted to scattering-symplectic manifolds.
\begin{lemma}[$sc$-Moser lemma]\label{lem:scatteringmoser}
	Let $i:L\hookrightarrow (M, Z)$ be a submanifold transverse to $Z$. Let $\omega_0, \omega_1$ be two scattering symplectic forms such that $\omega_1\vert_L=\omega_0\vert_L$. Then there exists neighbourhoods $U_0, U_1$ of $L$ in $M$ and a diffeomorphism $\varphi:(U_0, U_0\cap Z)\to (U_1, U_1\cap Z)$ such that $\varphi^*\omega_1=\omega_0$ on $U_0$ and $\varphi\vert_L=\id_L$.
\end{lemma}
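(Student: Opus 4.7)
The plan is to execute the classical Moser trick within the category of the scattering-tangent Lie algebroid ${}^{sc}T_ZM$, on which both $\omega_0$ and $\omega_1$ are genuine symplectic forms. Setting $\omega_t = (1-t)\omega_0 + t\omega_1 \in \Omega^2({}^{sc}T_ZM)$, the hypothesis $\omega_0|_L = \omega_1|_L$ guarantees that $\omega_t$ is non-degenerate on a neighbourhood of $L$ for every $t \in [0,1]$. Granted the existence of a primitive $\alpha \in \Omega^1({}^{sc}T_ZM)$ of $\eta := \omega_1 - \omega_0$ on a neighbourhood of $L$ with $\alpha|_L = 0$, I define the time-dependent section $X_t \in \Gamma({}^{sc}T_ZM)$ by $\iota_{X_t}\omega_t = -\alpha$. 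Since $\alpha|_L = 0$, also $X_t|_L = 0$, and the anchored vector field $\rho(X_t) \in \mf{X}(M)$ vanishes pointwise on $Z$ --- this is built into the definition of ${}^{sc}T_ZM = [T_ZM; 0_Z]$ as a zero-rescaling. Consequently $\rho(X_t)$ integrates to a flow $\varphi_t$ on a smaller neighbourhood $U_0$ of $L$ for all $t \in [0,1]$, with $\varphi_t|_L = \id_L$ and $\varphi_t(Z \cap U_0) \subset Z \cap U_0$. The Cartan computation
\[
\ddt \varphi_t^*\omega_t = \varphi_t^*(\LL_{X_t}\omega_t + \eta) = \varphi_t^*(d\iota_{X_t}\omega_t + d\alpha) = 0
\]
on ${}^{sc}T_ZM$ then yields $\varphi := \varphi_1$ with $\varphi^*\omega_1 = \omega_0$.

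The remaining task is the $sc$-relative Poincar\'e lemma: constructing such an $\alpha$. Using the transversality $L \pitchfork Z$, I would pick a tubular neighbourhood $\tau : NL \hookrightarrow U$ of $L$ compatible with $Z$, in the sense that $\tau^{-1}(Z) = \pi^{-1}(L \cap Z)$, where $\pi : NL \to L$ is the bundle projection; such a $\tau$ exists because transverse intersection gives a canonical embedding of the normal bundle of $L \cap Z$ in $Z$ into $NL|_{L \cap Z}$. Fiberwise scaling $m_s : NL \to NL$ then preserves $\tau^{-1}(Z)$, and --- after further adapting $\tau$ to the scattering algebroid structure along $L \cap Z$ --- each $m_s$ lifts by Proposition \ref{prop:rescalingmorphisms} to a Lie algebroid endomorphism $[m_s]$ of ${}^{sc}T_Z(NL)$. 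Applying the standard homotopy formula
\[
\alpha = \int_0^1 [m_s]^*(\iota_E \eta)\, \frac{ds}{s},
\]
with $E \in \Gamma({}^{sc}T_Z(NL))$ the Euler-type section induced by the scaling, then delivers the required primitive: the integrand remains bounded as $s \to 0$ because $\eta$ vanishes along $L$, and $\alpha|_L = 0$ because $E$ does.

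The main obstacle I anticipate is ensuring that the Euler-type section $E$ genuinely lies in $\Gamma({}^{sc}T_Z(NL))$ and that $[m_s]$ extends smoothly down to $s = 0$. The naive Euler vector field $\sum_k v_k \partial_{v_k}$ on $NL$ is a section of $T_Z(NL)$ but not of its zero-rescaling ${}^{sc}T_Z(NL)$, since it does not vanish on $\pi^{-1}(L \cap Z)$ to the required order. Overcoming this demands choosing $\tau$ not merely compatible with $Z$ as a subset, but adapted to the full scattering Lie algebroid structure along $L \cap Z$ --- essentially aligning the normal bundle directions at $L \cap Z$ with the kernel of the anchor of ${}^{sc}T_ZM$ there. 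Once such a compatible tubular neighbourhood is in hand, boundedness of the homotopy integral, vanishing of $\alpha$ on $L$, and integrability of the Moser flow all reduce to routine computations in the Lie algebroid de Rham complex.
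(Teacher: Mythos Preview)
Your approach has a genuine obstruction that cannot be removed by choosing a better tubular neighbourhood. You are trying to produce a scattering primitive $\alpha\in\Omega^1({}^{sc}T_ZM)$ of $\eta=\omega_1-\omega_0$, and then a Moser field $X_t\in\Gamma({}^{sc}T_ZM)$. But the hypothesis $\omega_0\vert_L=\omega_1\vert_L$ does \emph{not} force $[\omega_0]=[\omega_1]$ in scattering cohomology; the paper's remark following the lemma says explicitly that often these classes differ. If they differ, no scattering primitive $\alpha$ exists at all, and your homotopy integral cannot converge to an element of $\Omega^1({}^{sc}T_ZM)$ regardless of how the tubular neighbourhood is chosen. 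The difficulty you flag---that the Euler field $\mathcal{E}$ is log but not scattering---is a symptom of this: there is no scattering section generating the scaling, so the usual homotopy operator does not land in the scattering complex.

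The paper resolves this by abandoning the requirement that the Moser field be scattering and asking only that it be a \emph{log} vector field. Concretely: the homotopy primitive $\psi=\int_0^1 t^{-1}m_t^*(\iota_{\mathcal{E}}\eta)\,dt$ is a priori only a smooth one-form on $M\setminus Z$. Choosing a defining function $x$ for $Z$ pulled back from $L$ (so $m_t^*x=x$), one observes that $x\mathcal{E}$ \emph{is} scattering, hence $x\psi=\int_0^1 t^{-1}m_t^*(\iota_{x\mathcal{E}}\eta)\,dt$ is a genuine scattering one-form. Solving $\iota_{\tilde{X}_t}\omega_t=-x\psi$ in the scattering category and setting $X_t=\tilde{X}_t/x$ produces log vector fields satisfying $\iota_{X_t}\omega_t=-\psi$ away from $Z$, vanishing along $L$. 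Their flow is a $b$-map (hence, by Proposition~\ref{prop:rescalingmorphisms}, a scattering morphism) which pulls back $\omega_1$ to $\omega_0$ on the dense complement of $Z$, and hence everywhere by continuity. The point is that a log isotopy, unlike a scattering isotopy, is allowed to move the scattering cohomology class.
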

\begin{remark}
	For the scattering-symplectic forms $\omega_0, \omega_1$, the condition $\omega_1\vert_L=\omega_1\vert_L$ does \emph{not} imply that the cohomology classes in the scattering-cohomology agree. In fact, often they do not (see \cite{lanius2020}). The scattering-morphism changes the cohomology class of the symplectic form.
\end{remark}
\begin{proof}[Proof of Lemma \ref{lem:scatteringmoser}]
	By the splitting theorem for Lie algebroids (\cite{Bursztyn2016Splitting}, Theorem 4.1), applied to the log-tangent bundle, we can assume that $M=E$ is a vector bundle over $L$ and the hypersurface $Z$ is the bundle $E\vert_{\tilde{Z}}$ over the hypersurface $\tilde{Z}=Z\cap L$ in $L$. We set $\omega_t=\omega_0+t(\omega_1-\omega_0)$. 
	
	On $M\setminus Z$, we can write $\omega_1-\omega_0=d\psi$ with
	\[
	\psi=\int_0^1 \frac{1}{t}m^*_t(\iota_\EE (\omega_1-\omega_0)) dt.
	\]
	Here, $\EE$ is the Euler vector field of $E$, and $m_t$ is scalar multiplication by $t$. On $Z$, this is just a formal expression. We will show that there is a family of log-vector fields $X_t\in \mf{X}(M, Z)$ such that 
	\begin{equation}\label{eq:oneform}\tag{$*$}
		\iota_{X_t}\omega_t=-\psi \quad \mbox{on $M\setminus Z$}.
	\end{equation}
	By non-degeneracy of $\omega_t$ away from $Z$, the family of vector fields $X_t$ is uniquely determined on $M\setminus Z$, and satisfies $X_t\vert_{L\setminus\tilde{Z}}=0$. 
	
	To show that this extends to a log-vector field over $Z$, we need to zoom in on a neighbourhood of $Z$. Choose a defining function $\tilde{x}$ of $\tilde{Z}$ in $L$. Then $x=p^*\tilde{x}$ is a defining function for $Z$, with the nice property that $m^*_t(x)=x$ (and thus $\iota_\EE dx=0$). The key observation is that if $Y$ is a scattering vector field, then $Y/x$ is a log-vector field.
	\begin{claim*}
		The form $x\cdot \iota_\EE(\omega_1-\omega_0)$ is a scattering one-form.
	\end{claim*}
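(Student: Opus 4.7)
The plan is to exhibit $x\cdot\iota_\EE(\omega_1-\omega_0)$ as the interior contraction of a scattering two-form with a \emph{scattering} vector field. The point is that while $\EE$ itself is not scattering, the product $x\EE$ is, and from this the claim is immediate.

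First, I would check that $\EE$ is a log-vector field. Since $m_t^*x=x$ we have $\iota_\EE dx=0$, so $\EE$ is tangent to $Z$ and hence a section of $T_ZM$. (Concretely, in the local coordinates provided by the splitting at the start of the proof, $\EE=\sum_j y_j\del_{y_j}$ where the $y_j$ are linear fiber coordinates on $E\to L$, which is manifestly a section of $T_ZM$.) Then, using the observation already recorded in the text---that a log-vector field becomes a scattering vector field upon multiplication by $x$, equivalently $\Gamma({}^{sc}T_ZM)=x\cdot\Gamma(T_ZM)$---one concludes that $x\EE\in\Gamma({}^{sc}T_ZM)$.

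Since $\omega_1-\omega_0$ is a section of $\wedge^2({}^{sc}T^*_ZM)$, its contraction with the scattering vector field $x\EE$ is a section of ${}^{sc}T^*_ZM$, i.e.\ a scattering one-form. By $C^\infty(M)$-linearity of interior contraction in the vector slot,
\[
\iota_{x\EE}(\omega_1-\omega_0)\;=\;x\cdot\iota_\EE(\omega_1-\omega_0),
\]
so the right-hand side is a scattering one-form, which is the claim.

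There is no real obstacle: the argument rests entirely on the structural identification $\Gamma({}^{sc}T_ZM)=x\cdot\Gamma(T_ZM)$, which is a direct consequence of the definition of the zero-rescaling $[T_ZM;0_Z]$ and has essentially been stated in the paragraph preceding the claim. If one prefers to avoid the appeal to the algebroid-theoretic pairing, the same conclusion can be reached by a short coordinate computation in the local bases $\{x^2\del_x,\,x\del_{z_i},\,x\del_{y_j}\}$ of ${}^{sc}T_ZM$ and its dual $\{\tfrac{dx}{x^2},\,\tfrac{dz_i}{x},\,\tfrac{dy_j}{x}\}$, tracking how $\iota_\EE$ acts on each basis two-form and noting that every resulting term picks up a factor of some $y_j$ that is absorbed by the extra $x$.
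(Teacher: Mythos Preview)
Your argument is correct and is essentially identical to the paper's own proof: the paper observes that $\EE$ is a log-vector field, hence $x\EE$ is a scattering vector field, and therefore $x\cdot\iota_\EE(\omega_1-\omega_0)=\iota_{x\EE}(\omega_1-\omega_0)$ is a scattering one-form. You have simply spelled out in slightly more detail why $\EE$ is log (via $\iota_\EE dx=0$) and made explicit the identification $\Gamma({}^{sc}T_ZM)=x\cdot\Gamma(T_ZM)$, which the paper records in the contrapositive form ``if $Y$ is scattering then $Y/x$ is log''.
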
 
	\begin{proof}[Proof of the claim]
		The Euler-vector field $\mc{E}$ is a log-vector field, and thus $x\mc{E}$ is a scattering-vector field. Therefore, $x\cdot \iota_\EE(\omega_1-\omega_0)=\iota_{x\mc{E}}(\omega_1-\omega_0)$ is a scattering one-form.
	\end{proof}
	From the claim, it follows that
	\[
	x\cdot \psi=x\int_0^1 \frac{1}{t}m^*_t(\iota_\EE (\omega_1-\omega_0)) dt=\int_0^1 \frac{1}{t} m^*_t(x\cdot\iota_\EE (\omega_1-\omega_0)) dt
	\]
	is a scattering form ($m_t$ is a log-map and thus a scattering algebroid morphism by Proposition \ref{prop:rescalingmorphisms}). Hence, we can find scattering vector fields $\tilde{X}_t$ near $Z$ such that  $\iota_{\tilde{X}_t}\omega_t=-x\cdot\psi$ in a neighbourhood of $Z$. The log-vector fields $X_t=\tilde{X}_t/x$ extend the solution to (\ref{eq:oneform}) over $Z$.
	
	Let $\varphi$ be the time-1 flow of $X_t$, which is defined in a neighbourhood $U$ of $L$ because $X_t\vert_L=0$. Then $\varphi$ is a $b$-map that satisfies $\varphi^*\omega_1=\omega_0$ outside $Z$ by the standard Moser argument. But since both are scattering forms, they must agree also on $Z$.
\end{proof}

\subsubsection{Scattering Lagrangian neighbourhood theorem}
Let $i:(L, \tilde{Z})\hookrightarrow (M, Z, \omega)$ be a Lagrangian submanifold transverse to $Z$. As in the proof of Lemma \ref{lem:scatteringmoser}, we can assume that $(M, Z)=(NL, NL\vert_{\tilde{Z}})$. We can already linearize $\omega$ on the scattering algebroid level.
\begin{lemma}\label{lem:scatteringlinearization}
	The limit
	\[
	\omega_{\mathrm{lin}}=\lim_{t\to 0} \frac{m^*_t\omega}{t}
	\]
	defines a linear scattering 2-form on $(NL, NL\vert_{\tilde{Z}})$ that satisfies $\omega_{\mathrm{lin}}\vert_L=\omega\vert_L$.
\end{lemma}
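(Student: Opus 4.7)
The approach is to work in a local scattering frame adapted to $L$ and $\tilde{Z}$, and to exploit the Lagrangian hypothesis to control the terms of $m_t^*\omega/t$ that threaten to diverge. I read $\omega_{\mathrm{lin}}|_L = \omega|_L$ as pullback equality $i^*\omega_{\mathrm{lin}} = i^*\omega$, matching the convention used in Lemma \ref{lem:scatteringmoser}; the Lagrangian hypothesis will make both sides vanish.

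With $M = NL$ and $Z = NL|_{\tilde{Z}}$, pick a defining function $\tilde{x}$ for $\tilde{Z}$ in $L$, complete it to coordinates $(\tilde{x}, \tilde{y})$ on $L$, and use linear fibre coordinates $z$ on $NL \to L$; set $x := p^*\tilde{x}$. The scalar multiplication $m_t(\tilde{x}, \tilde{y}, z) = (\tilde{x}, \tilde{y}, tz)$ is a log-map, hence a scattering morphism by Proposition \ref{prop:rescalingmorphisms}, with action $1, 1, t$ on the local scattering frame $\{dx/x^2,\ d\tilde{y}_i/x,\ dz_j/x\}$. Expand $\omega$ in this frame with smooth coefficients $A_i, B_j, C_{ij}, D_{ij}, E_{ij}$ in $(\tilde{x}, \tilde{y}, z)$ for the five wedge types $\frac{dx}{x^2}\wedge\frac{d\tilde{y}_i}{x}$, $\frac{dx}{x^2}\wedge\frac{dz_j}{x}$, $\frac{d\tilde{y}_i}{x}\wedge\frac{d\tilde{y}_j}{x}$, $\frac{d\tilde{y}_i}{x}\wedge\frac{dz_j}{x}$, $\frac{dz_i}{x}\wedge\frac{dz_j}{x}$ respectively. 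Since $L$ is transverse to $Z$ the inclusion $i\colon L \hookrightarrow M$ is a scattering morphism, and the Lagrangian condition $i^*\omega = 0$ (which holds on $L \setminus \tilde{Z}$ by the usual Weinstein characterisation and extends to all of $L$ by smoothness of scattering sections) forces $A_i(\tilde{x}, \tilde{y}, 0) = 0$ and $C_{ij}(\tilde{x}, \tilde{y}, 0) = 0$, because $i^*dz_j = 0$ eliminates the other terms. Applying $m_t^*$ produces overall factors $1, t, 1, t, t^2$ on the five types, with coefficients evaluated at $(\tilde{x}, \tilde{y}, tz)$. Upon division by $t$: the $E$-term vanishes in the limit, the $B$- and $D$-terms converge to their values at $z=0$, and Taylor expansion $A_i(\tilde{x}, \tilde{y}, tz) = t\sum_k z_k \partial_{z_k}A_i(\tilde{x}, \tilde{y}, 0) + O(t^2)$ (and similarly for $C_{ij}$) shows that $A_i(\tilde{x}, \tilde{y}, tz)/t$ converges to a linear-in-$z$ function.

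Thus $\omega_{\mathrm{lin}}$ is a well-defined scattering two-form on $(NL, NL|_{\tilde{Z}})$. Its linearity $m_s^*\omega_{\mathrm{lin}} = s\omega_{\mathrm{lin}}$ is formal from the limit construction: $m_s^*\omega_{\mathrm{lin}} = \lim_{t\to 0} m_{ts}^*\omega/t = s\lim_{u\to 0} m_u^*\omega/u = s\omega_{\mathrm{lin}}$ by the substitution $u = ts$. Finally, $i^*\omega_{\mathrm{lin}} = 0$ since every surviving summand contains either a factor of $z$ (vanishing on $L$) or a factor of $dz$ (annihilated by $i^*$), so $\omega_{\mathrm{lin}}|_L = 0 = \omega|_L$. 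The main obstacle is establishing convergence of the $A$- and $C$-type terms after dividing by $t$, and this is precisely where the Lagrangian hypothesis intervenes: without $A|_L = C|_L = 0$ these would blow up like $1/t$. With this vanishing in hand, the remainder of the argument is routine bookkeeping of $t$-powers in the scattering frame.
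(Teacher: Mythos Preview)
Your proof is correct and follows the same strategy as the paper: both rely on $m_t$ being a scattering morphism (via Proposition~\ref{prop:rescalingmorphisms}) and on the Lagrangian hypothesis to make $m_t^*\omega$ vanish at $t=0$, so that the limit of $m_t^*\omega/t$ exists and is linear. The paper argues this invariantly in three lines---observing that $t\mapsto m_t^*\omega$ is a smooth curve of scattering $2$-forms with $m_0^*\omega=0$---whereas you unpack the same fact in a local scattering frame and track the $t$-weights, which is more laborious but makes explicit precisely which coefficients (your $A$ and $C$) require the Lagrangian condition.
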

\begin{proof}
	For every $t\in \RR$, the map $m_t:NL\to NL$ is a log-morphism and thus a scattering morphism by Proposition \ref{prop:rescalingmorphisms}. Therefore, $m^*_t\omega$ is scattering 2-form for all $t\in \RR$. Because $L$ is Lagrangian, $m_0^*\omega=0$. Since $m_t^*\omega$ is smooth in $t$, the limit $\lim_{t\to 0} m^*_t\omega/t$ exists and the resulting form is again smooth. Clearly, $m^*_\lambda \omega_{\mathrm{lin}}=\lambda \omega_{\mathrm{lin}}$, so the resulting form is indeed linear.
\end{proof}

\begin{theorem}[$sc$-Lagrangian neighbourhood theorem]\label{thm:sclagrangianneighbourhood}
	Let $(M, Z, \omega)$ be a scattering symplectic manifold with Poisson structure $\pi$ and $i:L\hookrightarrow (M, Z)$ a Lagrangian submanifold transverse to $Z$. Then there are neighbourhoods $U$ of $L$ in $M$ and $V$ of $L$ in $NL$ and a Poisson diffeomorphism on $\varphi:(U, \pi)\to (V, \pi_{\mathrm{lin}})$ that restricts to the identity on $L$.
\end{theorem}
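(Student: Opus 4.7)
The strategy is to combine the two main ingredients already established, namely the $sc$-Moser lemma (Lemma \ref{lem:scatteringmoser}) and the linear model constructed in Lemma \ref{lem:scatteringlinearization}. First I would invoke the splitting theorem for Lie algebroids applied to the log-tangent bundle $T_ZM$ (as was done in the proof of the $sc$-Moser lemma) to reduce to the situation where $M = NL$ and $Z = NL|_{\tilde Z}$ with $\tilde Z = Z \cap L$. In this model the scalar multiplication $m_t:NL \to NL$ is defined globally, and we may form $\omega_{\mathrm{lin}} = \lim_{t\to 0} m_t^*\omega / t$ as in Lemma \ref{lem:scatteringlinearization}; this is a linear scattering $2$-form satisfying $\omega_{\mathrm{lin}}|_L = \omega|_L$.

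Next, I would verify that $\omega_{\mathrm{lin}}$ is non-degenerate (hence scattering-symplectic) in some open neighbourhood of $L$. Since $\omega$ is scattering-symplectic, the restriction $\omega|_L$ is a non-degenerate pairing between $TL \subset {}^{sc}T_ZM|_L$ and its complement in ${}^{sc}T_ZM|_L$ (indeed $L$ is Lagrangian and transverse to $Z$, so the pairing $TL \otimes NL \to \mathbb{R}$ induced by $\omega$ is non-degenerate along $L$). As $\omega_{\mathrm{lin}}$ and $\omega$ agree along $L$, $\omega_{\mathrm{lin}}$ remains non-degenerate as a scattering form in an open neighbourhood $U$ of $L$ by continuity of the determinant. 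Closedness is automatic from the construction, since pullback commutes with $d$ and the limit is taken in the smooth topology.

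Now both $\omega$ and $\omega_{\mathrm{lin}}$ are scattering-symplectic forms defined in a neighbourhood of $L$, and they agree on $L$. Applying the $sc$-Moser lemma (Lemma \ref{lem:scatteringmoser}) with $\omega_0 = \omega_{\mathrm{lin}}$ and $\omega_1 = \omega$ yields neighbourhoods $U_0, U_1$ of $L$ and a diffeomorphism $\varphi:(U_0, U_0\cap Z) \to (U_1, U_1 \cap Z)$ such that $\varphi|_L = \id_L$ and $\varphi^*\omega = \omega_{\mathrm{lin}}$. Passing to the underlying Poisson structures via $\omega_{\mathrm{lin}} \leftrightarrow \pi_{\mathrm{lin}}$ and $\omega \leftrightarrow \pi$ (inversion on the scattering-tangent bundle), the identity $\varphi^*\omega = \omega_{\mathrm{lin}}$ translates into $\varphi_*\pi_{\mathrm{lin}} = \pi$ on the regular locus $U_0 \setminus Z$ and then by continuity on all of $U_0$. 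This gives the desired Poisson diffeomorphism $\varphi:(U, \pi) \to (V, \pi_{\mathrm{lin}})$ restricting to the identity on $L$.

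The main obstacle is the verification of non-degeneracy of $\omega_{\mathrm{lin}}$ in a neighbourhood of $L$: the limit defining $\omega_{\mathrm{lin}}$ is taken in the scattering cotangent complex, and one must check that the transversality of $L$ to $Z$ combined with $L$ being Lagrangian in $(M,\pi)$ is precisely what ensures that $\omega|_L$ is a non-degenerate pairing on ${}^{sc}T_ZM|_L$, so that non-degeneracy propagates to a neighbourhood. Once this linear-algebra check is in place, the rest is formal: the linear model plus Moser plus inversion between scattering-symplectic and scattering-Poisson structures.
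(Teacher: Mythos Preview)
Your proposal is correct and follows exactly the paper's approach: the paper's proof is the single sentence ``The theorem follows from Lemma \ref{lem:scatteringmoser} and Lemma \ref{lem:scatteringlinearization},'' and you have simply unpacked that sentence into its constituent steps (splitting-theorem reduction, formation of $\omega_{\mathrm{lin}}$, Moser, inversion). One minor simplification: the non-degeneracy of $\omega_{\mathrm{lin}}$ near $L$ does not require the Lagrangian pairing argument you give --- since $\omega$ is scattering-symplectic it is non-degenerate on every fibre of ${}^{sc}T_ZM$, in particular on the fibres over $L$, so $\omega_{\mathrm{lin}}\vert_L=\omega\vert_L$ is automatically non-degenerate and openness of non-degeneracy finishes it.
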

\begin{proof}
	The theorem follows from Lemma \ref{lem:scatteringmoser} and Lemma \ref{lem:scatteringlinearization}.
\end{proof}

\subsection{Poisson groupoids over scattering-symplectic manifolds}

Let $(M,Z)$ be a smooth log-manifold, with $p_Z:T_ZM\to M$ and $p_{sc}:{}^{sc}T_ZM\to M$ the bundle projections. The scattering Lie algebroid ${}^{sc}T_ZM$ is naturally anchored to the log-tangent bundle $T_ZM$, and therefore, according to \cite{smilde2021linearization}, if $\GG\toto M$ is an integration of $T_ZM$, the following squares are a double Lie algebroid and an LA-groupoid, respectively.
\[
\begin{tikzcd}
	p_{sc}^!T_ZM \arrow[d, Rightarrow] \arrow[r, Rightarrow] & T_ZM \arrow[d, Rightarrow]\\
	{}^{sc}T_ZM \arrow[r, Rightarrow] & M,
\end{tikzcd}
\quad
\begin{tikzcd}
	p_{sc}^!\GG \arrow[d, shift left] \arrow[d, shift right] \arrow[r, Rightarrow] & \GG \arrow[d, shift left]\arrow[d, shift right] \\ 
	{}^{sc}T_ZM \arrow[r, Rightarrow] & M.
\end{tikzcd}
\]
\begin{proposition}\label{prop:pullbackscatteringb}
	The pair $(T_ZM, T_ZM\vert_Z)$ is a log-manifold. Furthermore:
	\begin{itemize}[noitemsep, topsep=0em]
		\item[(i).] the prolongation algebroid $p^!_Z (T_ZM)\to T_ZM$ is isomorphic to the log-tangent bundle $T_{(T_ZM\vert_Z)} (T_ZM)$,
		\item[(ii).] the prolongation algebroid $p^!_{sc}(T_ZM)\to T_ZM$ is isomorphic to the scattering algebroid ${}^{sc}T_{(T_ZM\vert_Z)} (T_ZM)$.
	\end{itemize}
	If $\GG\toto M$ is an integration of $T_ZM$, then, with $\mc{H}=s^{-1}(Z)=t^{-1}(Z)$ as a critical hypersurface, the following hold:
	\begin{itemize}[noitemsep, topsep=0em]
		\item[(iii).] the Lie algebroid $p^!_Z\GG\Rightarrow \GG$ is isomorphic to the log-tangent bundle $T_{\mc{H}}\GG$.
		\item[(iv).] the Lie algebroid $p^!_{sc}\GG\Rightarrow \GG$ is isomorphic to the scattering-tangent bundle ${}^{sc}T_{\mc{H}}\GG$. 
	\end{itemize}
\end{proposition}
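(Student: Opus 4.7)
The plan is to prove all four claims uniformly by a sections-matching argument. For each claimed isomorphism, both sides are Lie algebroids over a common base (either $T_ZM$ or $\GG$), and by Serre--Swan it suffices to identify their modules of sections as the same subalgebra of vector fields on the base. I would verify this locally in coordinates, then check that the identification is canonical (and therefore glues) using the universal properties of the prolongation and rescaling constructions.

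First I would confirm the preliminary hypersurface claims: $T_ZM\vert_Z = p_Z^{-1}(Z)$ is embedded because $p_Z$ is a submersion, so $(T_ZM, T_ZM\vert_Z)$ is a log-manifold. For $\GG$, since $Z$ is a union of orbits of $T_ZM$ (the anchor image over $Z$ lies in $TZ$), every source-fiber through $Z$ stays in $Z$ under the target, giving $s^{-1}(Z)=t^{-1}(Z)$; this is an embedded hypersurface because $s$ is a submersion.

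For (i), the key step is to observe that the anchor of $p_Z^!(T_ZM)$ into $T(T_ZM)$ takes values in $T_{(T_ZM\vert_Z)}(T_ZM)$: if $(v,X)\in p_Z^!(T_ZM)_e$ with $e\in T_ZM\vert_Z$, the constraint $Tp_Z(X)=\rho_{T_ZM}(v)\in T_{p_Z(e)}Z$ (since $\rho_{T_ZM}$ lands in $TZ$ over $Z$) forces $X$ to be tangent to $p_Z^{-1}(Z)=T_ZM\vert_Z$. This yields a canonical Lie algebroid morphism $p_Z^!(T_ZM)\to T_{(T_ZM\vert_Z)}(T_ZM)$ covering the identity of $T_ZM$. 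To see it is an isomorphism, I would work in adapted coordinates $(x_1,\ldots,x_n)$ with $Z=\{x_1=0\}$ and fiber coordinates $(y_1,\ldots,y_n)$ on $T_ZM$ with respect to the local frame $(x_1\partial_{x_1},\partial_{x_2},\ldots,\partial_{x_n})$. Both algebroids have rank $2n$, and both are locally generated by the vertical vector fields $\partial_{y_i}$ together with the natural lifts of the generators $x_1\partial_{x_1},\partial_{x_j}$ of $T_ZM$ (interpreted once as a log-vector field on the total space, once as a prolonged section). The Lie bracket agrees because both brackets are restrictions of the bracket on $\mf{X}(T_ZM)$.

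For (ii), I would apply the zero-rescaling $[\,\cdot\,;0_{T_ZM\vert_Z}]$ to both sides of (i). On the right this is the scattering-tangent bundle by definition. On the left, Proposition \ref{prop:rescalingmorphisms} together with the description of $p_{sc}^!(T_ZM)$ in terms of sections identifies $[p_Z^!(T_ZM);0_{T_ZM\vert_Z}]$ with $p_{sc}^!(T_ZM)$: scattering sections of $T_ZM$ are exactly the log-vector fields vanishing along $Z$, which are exactly the $v$-components of pairs $(v,X)$ in $p_Z^!(T_ZM)$ with $X$ vanishing on $T_ZM\vert_Z$. Parts (iii) and (iv) then follow by the same template with $T_ZM$ replaced by $\GG$ and $T_ZM\vert_Z$ replaced by $\mc{H}$, using that right-invariant extensions of log-sections are tangent to $\mc{H}$.

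The main obstacle is verifying that the locally defined isomorphism in (i) is truly canonical and so glues to a global one; this should follow from intrinsic characterizations of both sides (the prolongation as a fiber product, the log-tangent bundle as a rescaling of $T(T_ZM)$ along $T(T_ZM\vert_Z)$) and the universal property of the rescaling. A secondary subtlety is the compatibility of the Lie brackets in the scattering cases (ii) and (iv), but this is automatic once (i) and (iii) are established, because the zero-rescaling is a functorial construction on Lie algebroid morphisms restricting appropriately on the hypersurface.
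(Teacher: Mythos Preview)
Your argument for (i) and (iii) is essentially the paper's: show that the anchor of the prolongation factors through the log-tangent bundle, then check it is an isomorphism. The paper sidesteps your gluing concern by exploiting the double vector bundle structure: the factored anchor sends complete lifts $\widetilde{v}$ to linear sections and vertical lifts $v^\uparrow$ to core sections, so it is a morphism of double vector bundles and is an isomorphism as soon as it induces the identity on the two side bundles and the core (all copies of $T_ZM$). This replaces your local coordinate verification and makes globality automatic.

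For (ii) and (iv) you take a different route: zero-rescale both sides of (i) along $T_ZM\vert_Z$ and claim $[p_Z^!(T_ZM);0_{T_ZM\vert_Z}]\cong p_{sc}^!(T_ZM)$. Your justification for this identification has a gap. You assert that the pairs $(v,X)\in p_Z^!(T_ZM)$ with $X$ vanishing on $T_ZM\vert_Z$ are exactly those with $v$ a scattering section, but this fails in both directions: if $X$ vanishes on $T_ZM\vert_Z$ then $\rho_{T_ZM}(v)=Tp_Z(X)$ vanishes on $Z$, which only forces $v\vert_Z\in\ker\rho_{T_ZM}\vert_Z$, not $v\vert_Z=0$; conversely, a scattering $v$ only forces $X$ to be vertical over $T_ZM\vert_Z$, not zero. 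The paper instead proves (ii) directly by the same template as (i): the sections of $p_{sc}^!(T_ZM)\to T_ZM$ are generated by $T_Zv$ and core lifts $\widehat{v}$ for $v\in\Gamma({}^{sc}T_ZM)$, and one checks that their anchor images (the complete lift $\widetilde{v}$ and the vertical lift $v^\uparrow$) vanish \emph{as log-vector fields} on $T_ZM\vert_Z$, because the inner-automorphism flow of a scattering section restricts to the identity on $T_ZM\vert_Z$. The isomorphism again follows from the side-bundles-and-core test. Your functorial shortcut may ultimately be salvageable, but it requires a correct identification of $p_{sc}^!(T_ZM)$ with the zero-rescaling of $p_Z^!(T_ZM)$, and the argument you give does not supply one.
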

\begin{proof}
	We follow the notation in \cite{smilde2021linearization}, Section 2.
	
	Recall that $p^!_Z(T_ZM)$ has side bundles and core isomorphic to $T_ZM$. The same is true for $T_{(T_ZM\vert_Z)}(T_ZM)$. 
	
	The sections of $p^!_Z(T_ZM)\to T_ZM$ are generated by the complete lifts $\widetilde{v}$ and vertical lifts $v^\uparrow$ for $v\in \Gamma(T_ZM)$. Since the $\varphi_t$ of a section $v\in \Gamma(T_ZM)$ preserves $T_ZM\vert_Z$, it follows that $\widetilde{v}$ is tangent to $T_ZM\vert_Z$, and thus a section of $T_{(T_ZM\vert_Z)}(T_ZM)$. This shows that the anchor of $p^!_Z (T_Z M)\to T_Z M$ factors through $T_{(T_ZM\vert_Z)}(T_ZM)$. The map $p^!_Z(T_Z M)\to T_{(T_ZM\vert_Z)}(T_ZM)$ is a map of double vector bundles, because it sends linear sections to linear sections and core sections to core sections. Now it follows that the map is an isomorphism as it is the identity on the side bundles and the core. 
	
	For the scattering bundles, the argument is similar. The sections of $p_{sc}^!(T_ZM)\to T_ZM$ are generated by $T_Zv$ and the core lifts $\widehat{v}$ for $v\in \Gamma({}^{sc}T_ZM)$.  The anchor sends $T_Z v$ to the vector field $\widetilde{v}$, which vanishes (as a log-vector field) over $T_ZM\vert_Z$ (indeed, the flow of scattering vector fields induce the identity on $T_ZM\vert_Z$), and is therefore a scattering-vector field. The anchor sends $\widehat{v}$ to $v^\uparrow$, which too vanishes as a log-vector field over $T_ZM\vert_Z$. It follows that there is an induced map of double vector bundles $p_{sc}^!(T_ZM)\to {}^{sc}T_{(T_ZM\vert_Z)} (T_ZM)$ that is an isomorphism because it is the identity on the side bundles and the core.
	
	The proofs of statements \textit{(iii)} and \textit{(iv)} are similar. One replaces $T_Z v$ by the star sections $(t^*v+s^*v, v)$ and the core sections by the right-invariant core sections $t^*v$ (whose anchor is the right-invariant vector field $\overrightarrow{v}$ on $\GG$) as in the proof of Theorem 2.16 in \cite{smilde2021linearization}. 
\end{proof}
\begin{proof}[Proof of Theorem \ref{thm:scatteringintegrationlinearizable}] 
Let $(M, Z, \omega)$ be a scattering-symplectic manifold and $\GG\toto M$ an integration of the log-tangent bundle $T_ZM$. In the notation of Proposition \ref{prop:pullbackscatteringb}, let $\tilde{s}, \tilde{t}:p^!_{sc}\GG\to {}^{sc}T_ZM$ be the source- and target map. Then $(\GG, \mc{H}, \tilde{s}^*\omega-\tilde{t}^*\omega)$ is a scattering-symplectic manifold with underlying Poisson structure $\lar{\pi_Z}-\rar{\pi_Z}$ on $\GG$, where $\pi_Z\in \Gamma(\wedge^2T_ZM)$ is the Poisson structure on the log-tangent bundle induced by $\omega$. Since $M\subset \GG$ is transverse to the hypersurface $\mc{H}$, the Poisson groupoid $(\GG, \Pi)$ is linearizable around $M$ by the scattering-Lagrangian neighbourhood Theorem \ref{thm:sclagrangianneighbourhood}. This completes the proof.
\end{proof}

\appendix

 \small
\bibliographystyle{abbrv}

\begin{thebibliography}{10}
	
	\bibitem{androulidakiszambon2017}
	I.~Androulidakis and M.~Zambon.
	\newblock Almost regular {P}oisson manifolds and their holonomy groupoids.
	\newblock {\em Selecta Mathematica}, 23(3):2291--2330, 2017.
	
	\bibitem{Bursztyn2016Splitting}
	H.~Bursztyn, H.~Lima, and E.~Meinrenken.
	\newblock Splitting theorems for {P}oisson and related structures.
	\newblock {\em Journal f{\"u}r die reine und angewandte Mathematik (Crelles
		Journal)}, 2019:281 -- 312, 2016.
	
	\bibitem{bursztynweinstein2004}
	H.~Bursztyn and A.~Weinstein.
	\newblock Picard groups in {P}oisson geometry.
	\newblock {\em Moscow Mathematical Journal}, 4(1):39--66, 2004.
	
	\bibitem{Cavalcantigualtieri2017}
	G.~R. Cavalcanti and M.~Gualtieri.
	\newblock Stable generalized complex structures.
	\newblock {\em Proceedings of the London Mathematical Society},
	116(5):1075–1111, 2017.
	
	\bibitem{debord2000}
	C.~Debord.
	\newblock {Holonomy groupoids of singular foliations}.
	\newblock {\em Journal of Differential Geometry}, 58(3):467 -- 500, 2001.
	
	\bibitem{ebinmarsden1970}
	D.~G. Ebin and J.~Marsden.
	\newblock Groups of diffeomorphisms and the motion of an incompressible fluid.
	\newblock {\em Annals of Mathematics}, 92(1):102--163, 1970.
	
	\bibitem{gualtierili2014}
	M.~Gualtieri and S.~Li.
	\newblock Symplectic groupoids of log symplectic manifolds.
	\newblock {\em International Mathematics Research Notices},
	2014(11):3022--3074, 2014.
	
	\bibitem{guilleminmirandapires2014}
	V.~Guillemin, E.~Miranda, and A.~R. Pires.
	\newblock Symplectic and {P}oisson geometry on b-manifolds.
	\newblock {\em Advances in Mathematics}, 264:864 -- 896, 2014.
	
	\bibitem{jarchow1981}
	H.~Jarchow.
	\newblock {\em Locally Convex Spaces}.
	\newblock Mathematische Leitf\"{a}den. Vieweg+Teubner Verlag, 1981.
	
	\bibitem{klaasse2018}
	R.~Klaasse.
	\newblock Poisson structures of divisor type,
	\href{https://arxiv.org/abs/1811.04226}{arXiv:1811.04226}, 2018.
	
	\bibitem{krieglmichor1997}
	A.~Kriegl and P.~Michor.
	\newblock {\em The convenient setting of global analysis}.
	\newblock Mathematical Surveys and Monographs, 53, 1997.
	
	\bibitem{lanius2020}
	M.~Lanius.
	\newblock Symplectic, {P}oisson, and contact geometry on scattering manifolds.
	\newblock {\em Pacific Journal of Mathematics}, 310(1):213–256, 2021.
	
	\bibitem{mackenzie2005}
	K.~Mackenzie.
	\newblock {\em General Theory of Lie Groupoids and Lie Algebroids}.
	\newblock London Mathematical Society Lecture Note Series. Cambridge University
	Press, 2005.
	
	\bibitem{mackenziexu1994}
	K.~Mackenzie and P.~Xu.
	\newblock Lie bialgebroids and Poisson groupoids.
	\newblock {\em Duke Mathematical Journal}, 73(2): 415-452, 1994.
	
	\bibitem{marcut2020}
	I.~\Marcut.
	\newblock Poisson structures whose {P}oisson diffeomorphism group is not
	locally path-connected,
	\href{https://arxiv.org/abs/2001.00644}{arXiv:2001.00644}, 2020.
	
	\bibitem{mcduffsalamon2007}
	D.~McDuff and D.~Salamon.
	\newblock {\em Introduction to Symplectic Topology}.
	\newblock Oxford University Press, 2017.
	
	\bibitem{mirandascott2020}
	E.~Miranda and G.~Scott.
	\newblock The geometry of {E}-manifolds.
	\newblock {\em Revista Matemática Iberoamericana}, 37(3):1207–1224, 2020.
	
	\bibitem{neeb2015}
	K.-H. Neeb.
	\newblock Towards a {L}ie theory for locally convex groups,
	\href{https://arxiv.org/abs/1501.06269}{arXiv:1501.06269}, 2015.
	
	\bibitem{osornotorresphd}
	B.~Osorno Torres.
	\newblock Codimension-one symplectic foliations: constructions and examples.
	\newblock {\em Ph.D. thesis, Utrecht University}, 2015.
	
	\bibitem{Rybicki2001foliated}
	T.~Rybicki.
	\newblock On foliated, {P}oisson and {H}amiltonian diffeomorphisms.
	\newblock {\em Differential Geometry and its Applications}, 15(1):33 -- 46,
	2001.
	
	\bibitem{Rybicki2001}
	T.~Rybicki.
	\newblock {On the group of Lagrangian bisections of a symplectic groupoid}.
	\newblock {\em Banach Center Publications}, 54:235--247, 2001.
	
	\bibitem{Rybicki2002}
	T.~Rybicki.
	\newblock {A Lie group structure on strict groups}.
	\newblock {\em Publicationes Mathematicae}, 61, 2002.
	
	\bibitem{SchmedingWockel2015}
	A.~Schmeding and C.~Wockel.
	\newblock {The Lie group of bisections of a Lie groupoid}.
	\newblock {\em Annals of Global Analysis and Geometry}, 48:87--123, 2015.
	
	\bibitem{smilde2021linearization}
	W.~Smilde.
	\newblock Linearization of {P}oisson groupoids, 	\href{https://arxiv.org/abs/2108.11491}{arXiv:2108.11491}, 2021.
	
	\bibitem{vaisman1994}
	I.~Vaisman.
	\newblock {\em Lectures on the {G}eometry of {P}oisson {M}anifolds}.
	\newblock Birkh\"auser Basel, 1994.
	
	\bibitem{weinstein1988}
	A.~Weinstein.
	\newblock Coisotropic calculus and {P}oisson groupoids.
	\newblock {\em Journal of the Mathematical Society of Japan}, 40(4):705--727,
	1988.
	
	\bibitem{xu1997}
	P.~Xu.
	\newblock Flux homomorphism on symplectic groupoids.
	\newblock {\em Mathematische Zeitschrift}, 226(4):575--597, 1997.
	
\end{thebibliography}

\end{document}